\newtheorem{lem}{Lemma}
\newtheorem{thm}{Theorem}
\newtheorem*{thm*}{Theorem}
\newtheorem{prop}{Proposition}
\newtheorem{rem}{Remark}
\theoremstyle{definition}
\newtheorem{defn}{Definition}
\title{Branching polytopes for classical Lie algebras over $A_{n-1}$}
\author{Daniel Kalmbach}
\address{Chair of Algebra and Representation Theory, RWTH Aachen University, Germany}
\email{kalmbach@art.rwth-aachen.de}
\newcommand{\R}{\mathbb{R}}
\newcommand{\C}{\mathbb{C}}
\newcommand{\Z}{\mathbb{Z}}
\newcommand{\N}{\mathbb{N}}
\newcommand{\comment}[1]{}
\begin{document}

\maketitle

\begin{abstract}
We describe the branching of Lie algebras of classical type over $A_{n-1}$ using an inductive approach, which was motivated by the work of Gornitskii. This allows us to label the highest weight vectors of the modules occurring in the decomposition of the restriction of a finite-dimensional simple module to $A_{n-1}$ by lattice points of a string or a Lusztig polytope.
\end{abstract}

\section*{Introduction}
Let $\mathfrak{g}$ be a finite-dimensional, semisimple, complex Lie algebra of rank $n$ with Lie group $G$ and $\mathfrak{g}=\mathfrak{n}^-\oplus\mathfrak{h}\oplus\mathfrak{n}^+$ a Cartan decomposition with Cartan subalgebra $\mathfrak{h}$ and Borel subalgebra $\mathfrak{b}=\mathfrak{h}\oplus\mathfrak{n}^+$. 
Let $U^-$ and $U^+$ be the corresponding maximal unipotent subgroups of $G$ having $\mathfrak{n}^-$ and $\mathfrak{n}^+$ as Lie algebras, $\Lambda$ be the weight lattice of $\mathfrak{g}$ and $\Lambda^+\subset\Lambda$ be the set of dominant weights. Let $\mathfrak{g}'$ be a semisimple Lie subalgebra of $\mathfrak{g}$. 
For a dominant weight $\lambda\in\Lambda^+$ and a highest weight module $V(\lambda)$ we can consider the restriction $V(\lambda)|_{\mathfrak{g}'}$, which decomposes into a direct sum of irreducible $\mathfrak{g}'$ modules. 
Locating the highest weight vectors for these modules in $V(\lambda)$ is called the \textit{branching problem}. For characterizing this problem, we have to find a basis of $V(\lambda)$ containing these vectors and a nice parametrization of this basis.\par
We consider the branching problem for $\mathfrak{g}=\mathfrak{so}_{2n}$, $\mathfrak{g}=\mathfrak{so}_{2n+1}$ or $\mathfrak{g}=\mathfrak{sp}_{2n}$ and $\mathfrak{g}'=\mathfrak{sl}_n$. For describing the problem, we need a parametrization of bases of the finite dimensional irreducible representations of $\mathfrak{g}$ (i.e. highest weight modules) which is compatible with our embedding of $\mathfrak{g}'$ in $\mathfrak{g}$.

Fang, Fourier and Littelmann \cite{FaFL17} gave a unified approach for constructing monomial bases of highest weight modules of $\mathfrak{g}$: Let $N$ be the number of positive roots of $\mathfrak{g}$ and $S=(\beta_1,\dots,\beta_N)$ a sequence of positive roots (not necessarily pairwise distinct). They call this sequence a \textit{birational sequence} if the product map of the associated unipotent root subgroups 
$$\eta:U_{-\beta_1}\times\dots\times U_{-\beta_N}\rightarrow U^-$$
is birational.
For any chosen total order on $\N^N$ one obtains a monoid $\Gamma\subseteq\Lambda^+\times\Z^N$, whose projection to $\Lambda^ +$ yields as a fiber a monomial basis of $V(\lambda)$ for each $\lambda\in \Lambda^+$. If the monoid $\Gamma$ is finitely generated and saturated, it is the set of lattice points of some polyhedral cone in $\Lambda_{\R}\times \R^N$, the fiber over each $\lambda \in \Lambda^+$ is a convex polytope.

\par
An example for a birational sequence is the \textit{string case}: Let $\underline{\omega_0}=s_{i_1}\dots s_{i_N}$ be a reduced decomposition of the longest element in the Weyl group $W$ of $\mathfrak{g}$. 
Then we call $\mathbf{i}=(i_1,\dots,i_N)$ a \textit{reduced word} for $\omega_0$ and define the sequence $S$ just as $S=(\alpha_{i_1},\dots,\alpha_{i_N})$, where $\alpha_1,\dots,\alpha_n$ are the simple roots of $\mathfrak{g}$.
This parametrization was defined for a fixed monomial order by Berenstein and Zelevinsky \cite{BZ93}, \cite{BZ96}, \cite{BZ01} and further described by Littelmann \cite{Li98}. Berenstein and Zelevinsky gave an explicit description of the string cone and polytope for type $A_n$ and a special reduced word for $\omega_0$ and as well an algorithm for an arbitrary reduced word.
Littelmann gave a description for a whole class of reduced words in every type, calling these words \textit{nice}.
Moreover, he gave a formula to calculate the additional inequalities for the polytopes for an arbitrary reduced word. 
\par
Another example is the \textit{Poincaré-Birkhoff-Witt (PBW)-type case}, where $S$ is just an enumeration on the set of all positive roots.
A special case of PBW bases is the Lusztig case, where a convex order on the set of all positive roots is used: Fix again a reduced word $\mathbf{i}$ for $\omega_0$. We define the sequence $S$ by $\beta_{\mathbf{i},k}=s_{i_1}\dots s_{i_{k-1}}(\alpha_{i_k})$. 
This parametrization is up to a filtration compatible with Lusztig's \textit{canonical basis} defined in \cite{Lu90}, \cite{Lu90II}, \cite{Lu93} . The canonical basis was later proved to coincide with Kashiwara's \cite{Ka91}, \cite{Ka93} \textit{global basis}.\par
For describing branching problems, the order on the positive roots should be compatible with the chosen embedding of $\mathfrak{g}'$ in $\mathfrak{g}$. 
Molev and Yakimova described the branching of $C_n$ over $C_{n-1}$ in \cite{MY18} using the \textit{Feigin-Fourier-Littelmann-Vinberg (FFLV) method} \cite{FFL11}, \cite{FFL112}, \cite{FFL17}, \cite{Vi05} for constructing monomial bases for the highest weight modules.
Gornitskii \cite{Go19} described a branching of $B_n$ and $D_{n+1}$ over $D_{n}$, using a filtration defined by a non-homogeneous order on $\Z^N$ and an order on the set of positive roots which is not of Lusztig type, but allows an inductive construction of bases embedding $D_n$ in $B_n$ and $D_{n+1}$.
Due to the observation, that in the $A_n$ case this approach gives bases of Lusztig type, we are aiming for an order of Lusztig type on the set of positive roots of $B_n$ and $D_n$ which also allows us to use a restriction to some smaller dimensional Lie algebra.
It turns out, that this is possible with a projection of $B_n$, $D_n$ and also $C_n$ to $A_{n-1}$. As the results in all three cases are very similar, we will focus on the $D_n$ case in this introduction. The results for $B_n$ can be found in Theorem \ref{thmBZfulfillsineqB}, Theorem \ref{StringConeB} and Theorem \ref{thmbranchingD} and for $C_n$ in Theorem \ref{thmBZfulfillsineqC}, Theorem \ref{StringConeC} and Theorem \ref{thmbranchingD}.\\\par

The reduced words we are using are not nice in the sense of Littelmann, but our inductive approach allows us to calculate all the cone inequalities explicitly, using a recursive description of the string cone by Littelmann and a polyhedral cone described by Berenstein and Zelevinsky, which contains the string cone (\cite{BZ01}, Theorem 3.14).
In the $A_n$ case, the two cones coincide for any $\omega_0$. This yields the suggestion, that the cones are also closely related in other cases and we will prove that they in fact are, at least under a special condition on the choice of a reduced word for $\omega_0$.

For $B_2,\;C_2$ and $D_3$ the cones again coincide with the string cones and starting from this we can use induction on $n$ to find the inequalities defining the string cones for $B_n,\;C_n$ and $D_n$.\\\par
To be precise, corresponding to our embedding of $\mathfrak{g}'$ in $\mathfrak{g}$ we choose a reduced decomposition of $\omega_0$ such that its first part is a reduced decomposition for the longest element for the Weyl group of type $A_{n-1}$, so we can use a projection to $A_{n-1}$ using another result from \cite{BZ01}, which tells us that the string cone is the direct product of two cones for our reduced decomposition. 
For the reduced decomposition given below, the first of these cones is just the string cone for $A_{n-1}$, where we know all the inequalities. The second one looks like a copy of the first with some additional facets.
In fact, the results for the branching cone and polytope mentioned below are independent of the choice of a reduced decomposition of $\omega_0$ as long as it respects our embedding as mentioned above.\par
Our favorite reduced word is  $\mathbf{i}^{D_n}=\mathbf{i}^{A_{n-1}}\mathbf{i}^{\tilde{D_n}}$ with 
$$
\begin{array}{lll}
&\mathbf{i}^{A_{n-1}}=(n-1,n-2,n-1,n-3,n-2,n-1,\dots,1,2,\dots,n-1),\\
&\mathbf{i}^{\tilde{D_n}}=(n,n-2,n-1,n-3,n-2,n,\dots,1 ,2,\dots,n-2,n/n-1).
\end{array}$$
The last entry is $n$ for $n$ even and $n-1$ for $n$ odd. We illustrate the pattern with two examples:
$$\mathbf{i}^{\tilde{D_4}}=(4,2,3,1,2,4),\mathbf{i}^{\tilde{D_5}}=(5,3,4,2,3,5,1,2,3,4).$$\par

The first main theorem of this paper describes the string cone of this reduced word:
\begin{thm*}
For $\mathfrak{g}=\mathfrak{so}_{2n}$ the string cone $\mathcal{S}_{\mathbf{i}^{D_n}}$ is given by the set of all points $$(t_{1,1}^-,t_{1,2}^-,t_{2,2}^-,\dots,t_{1,n-1}^-,\dots,t_{n-1,n-1}^-,t_{1,1}^+,t_{1,2}^+,t_{2,2}^+,\dots,t_{1,n-1}^+,\dots,t_{n-1,n-1}^+)\in\Z^N_{\geq 0}$$ satisfying the inequalities 
$$
t_{i,j}^-\geq t_{i+1,j}^-,\quad t_{i,j}^+\geq t_{i+1,j}^+, \;\forall\; 1\leq i<j \leq n-1
$$
and
$$
t_{i,j}^+\geq t_{i,j+1}^+, \;\forall\; 1\leq i\leq j\leq n-2.
$$
\end{thm*}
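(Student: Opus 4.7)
The plan is to combine three ingredients already quoted in the introduction: the direct product decomposition of the string cone from \cite{BZ01} (available because $\mathbf{i}^{D_n}$ begins with a reduced word for the longest element of $W(A_{n-1})$), the Berenstein--Zelevinsky outer polyhedral bound from \cite{BZ01}, Theorem~3.14, and an induction on $n$ based on Littelmann's recursive description of the string cone, with $D_3$ as the base case.

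First I would apply the BZ decomposition to factor $\mathcal{S}_{\mathbf{i}^{D_n}} = \mathcal{S}_{\mathbf{i}^{A_{n-1}}}\times \mathcal{C}_{\tilde{D_n}}$, so that the $t^-$ and $t^+$ coordinates decouple. For the first factor I can quote the classical Berenstein--Zelevinsky description of the string cone for the standard reduced word of $A_{n-1}$, which gives exactly the inequalities $t^-_{i,j}\geq t^-_{i+1,j}$ (together with non-negativity) claimed in the theorem. The whole work therefore reduces to showing that $\mathcal{C}_{\tilde{D_n}}$ is cut out by the stated inequalities on the $t^+$-variables.

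For the second factor my strategy is a two-sided containment. The easy direction, $\mathcal{C}_{\tilde{D_n}}\subseteq\mathcal{P}$ for the polytope $\mathcal{P}$ defined by $t^+_{i,j}\geq t^+_{i+1,j}$ and $t^+_{i,j}\geq t^+_{i,j+1}$ and non-negativity, follows from evaluating the BZ outer bound of Theorem~3.14 of \cite{BZ01} on the concrete word $\mathbf{i}^{\tilde{D_n}}$: by the structural analogy of $\mathbf{i}^{\tilde{D_n}}$ with the standard $A$-type word one reads off the $t^+_{i,j}\geq t^+_{i+1,j}$ family, while the additional $D_n$-root that is spliced into each block produces exactly the extra inequalities $t^+_{i,j}\geq t^+_{i,j+1}$. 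The reverse containment $\mathcal{P}\subseteq\mathcal{C}_{\tilde{D_n}}$ is proved by induction on $n$: for $n=3$, direct inspection (as announced in the introduction for $D_3$) gives equality; for the inductive step I use Littelmann's recursion, which reconstructs the string cone for $\mathbf{i}^{\tilde{D_n}}$ from the string cone for the shorter tail (essentially a $\tilde{D}_{n-1}$-type word plus one leading block) together with constraints on the newly prepended coordinates. By induction the shorter cone is known, and one checks that the constraints produced by the recursion when prepending the block $(n,n-2,n-1,n-3,n-2,n,\dots)$ are exactly the inequalities relating the new $t^+_{i,j}$ to their neighbours in the $(i+1,j)$ and $(i,j+1)$ directions.

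The main obstacle is the inductive step: the word $\mathbf{i}^{\tilde{D_n}}$ is not nice in Littelmann's sense, so the recursion must be applied by hand, and one has to verify that no facet beyond those in $\mathcal{P}$ is produced at any stage. Concretely, one must match the Kashiwara-string constraints arising from each prepended simple reflection against the BZ outer bound and check that every facet of $\mathcal{C}_{\tilde{D_n}}$ is already present in $\mathcal{P}$; the dependence of the final letter of $\mathbf{i}^{\tilde{D_n}}$ on the parity of $n$ forces a case split that needs separate (but parallel) bookkeeping. Once this is done, the two containments combine with the BZ product decomposition to yield the full set of defining inequalities.
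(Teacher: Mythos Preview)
Your overall architecture—BZ product decomposition to split off the $A_{n-1}$ factor, BZ outer bound for one containment, Littelmann's criterion for the other—matches the paper's, but you have assigned the two tools to the wrong directions, and this creates a real gap in each.

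For $\mathcal{C}_{\tilde{D_n}}\subseteq\mathcal{P}$ you assert that evaluating the BZ outer bound on $\mathbf{i}^{\tilde{D_n}}$ already yields all of the $t^+_{i,j}\geq t^+_{i,j+1}$. The paper's computation (Lemma~\ref{LemIneqD}) shows that the natural families of subwords for the $z^{(i)}$ only produce $t^+_{i,j}\geq t^+_{i+1,j}$ together with the single column $t^+_{i,n-2}\geq t^+_{i,n-1}$; your sentence ``the additional $D_n$-root spliced into each block produces exactly the extra inequalities'' is not backed by any explicit subword and does not hold for the subwords one can write down. The paper obtains the remaining inequalities $t^+_{i,j}\geq t^+_{i,j+1}$, $j<n-2$, by induction on $n$ (Theorem~\ref{thmBZfulfillsineq}): one replaces the $A_n$-part of a string by zero, passes to the Levi on $\alpha_2,\dots,\alpha_{n+1}\cong\mathfrak{so}_{2n}$, observes via Littelmann's restriction rule (Lemma~\ref{lemres}) that the projection forgetting the \emph{last} block of each half lands in $\mathcal{S}_{\mathbf{i}^{D_n}}$ after an index shift, and invokes the inductive hypothesis. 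So induction on $n$ is used, but on the forward containment, not the reverse one.

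For $\mathcal{P}\subseteq\mathcal{C}_{\tilde{D_n}}$ your inductive step rests on the claim that $\mathbf{i}^{\tilde{D_n}}$ is ``a $\tilde D_{n-1}$-type word plus one leading block''. It is not: removing the first block of $\mathbf{i}^{\tilde{D_n}}$ leaves a word on the alphabet $\{1,\dots,n\}$ which is not a relabelling of $\mathbf{i}^{\tilde{D_{n-1}}}$, so no reduction to rank $n-1$ is available here. The paper instead applies Littelmann's criterion (Theorem~\ref{thmrecl}) directly for fixed $n$: it proves by downward induction on $j$ (from $N$ to $1$) that every auxiliary vector $m^j$ still satisfies the inequalities~(\ref{ineqCimin}), and then reads off $\Delta^j(k)\geq 0$ from the case analysis. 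This is a recursion on the position $j$ in the word, not on the rank, and the parity of $n$ never needs a case split in this direction.
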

Projecting the string cone to the $t^+$ variables, we obtain the \textit{string branching cone}. It is of surprisingly easy structure, for example, for the branching of $D_6$ over $A_5$ it is the order polyhedron of the following poset:\\
\scalebox{0.7}{
\begin{tikzpicture}
  \tikzstyle{arrow} = [->,>=stealth]

\draw (11,0)node(11+){$t_{1,1}^+$};
\draw (10.0,-1.0)node(12+){$t_{1,2}^+$};
\draw (9.0,-2.0)node(13+){$t_{1,3}^+$};\draw (11.0,-2.0)node(22+){$t_{2,2}^+$};
\draw (8.0,-3.0)node(14+){$t_{1,4}^+$};\draw (10.0,-3.0)node(23+){$t_{2,3}^+$};
\draw (7.0,-4.0)node(15+){$t_{1,5}^+$};\draw (9.0,-4.0)node(24+){$t_{2,4}^+$};\draw (11.0,-4.0)node(33+){$t_{3,3}^+$};
\draw (8.0,-5.0)node(25+){$t^+_{2,5}$};\draw (10.0,-5.0)node(34+){$t^+_{3,4}$};\draw (9.0,-6.0)node(35+){$t^+_{3,5}$};\draw (11.0,-6.0)node(44+){$t^+_{4,4}$};\draw (10.0,-7.0)node(45+){$t^+_{4,5}$};\draw (11.0,-8.0)node(55+){$t^+_{5,5}$};
\draw[arrow](12+)--(11+);
\draw[arrow](13+)--(12+);\draw[arrow](22+)--(12+);
\draw[arrow](14+)--(13+);\draw[arrow](23+)--(13+);\draw[arrow](23+)--(22+);
\draw[arrow](15+)--(14+);\draw[arrow](24+)--(14+);\draw[arrow](24+)--(23+);\draw[arrow](33+)--(23+);
\draw[arrow](25+)--(15+);\draw[arrow](25+)--(24+);\draw[arrow](34+)--(24+);\draw[arrow](34+)--(33+);
\draw[arrow](35+)--(25+);\draw[arrow](35+)--(34+);\draw[arrow](44+)--(34+);\draw[arrow](55+)--(45+);
\draw[arrow](45+)--(35+);\draw[arrow](45+)--(44+);

\end{tikzpicture}
}

In order to locate the highest weight vectors in the branching, we next focus on the polytope. The additional inequalities for the string polytope can be directly read of from Littelmann's work \cite{Li98}. We do not get a direct product of two polytopes here because the additional inequalities for the $t^-$ variables depend on the $t^+$ variables.
However, considering the projection on the $t^+$ variables, we obtain a polytope which we call the \textit{string branching polytope} $\mathcal{S}_{\mathbf{i}^{\tilde{D}_n}}(\lambda)$. It consists of $\mathfrak{sl}_n$ highest weight vectors, which means we get a decomposition
\begin{equation}
\displaystyle{\mathcal{S}^L_{\mathbf{i}^{D_n}}(\lambda)=\Dot{\bigcup}_{t\in \mathcal{S}^L_{\mathbf{i}^{\tilde{D}_n}}(\lambda)}\mathcal{S}^L_{\mathbf{i}^{A_{n-1}}}(\lambda-t\cdot\alpha^T_{\tilde{D}})\times \{t\}.}\label{branchingD}    
\end{equation}

Here, $\mathcal{S}^L$ is the set of lattice points in $\mathcal{S}$ and $\alpha^T_{\tilde{D}}=(\alpha_{i_{\frac{N}{2}+1}},\dots,\alpha_{i_N})$. So, for a $D_n$ highest weight module $V(\lambda)$ we can calculate the multiplicity of an $A_{n-1}$ module of highest weight $\mu$ in $V(\lambda)|_{\mathfrak{sl}_n}$ by counting the vectors $t$ in $\mathcal{S}_{\mathbf{i}^{\tilde{D}_n}}(\lambda)$ with $t\cdot\alpha^T_{\tilde{D}}=\lambda-\mu$. \par
Let $\lambda=\lambda_1\omega_1+\dots+\lambda_n\omega_n$ where $\omega_i$ are the $D_n$ fundamental weights.
Morier-Genoud gave in \cite{MG08} linear bijective maps between the string and the Lusztig polytope, which allow us to transfer our results to Lusztig's parametrization and obtain the \textit{Lusztig branching polytope}:
\begin{thm*}
The Lusztig branching polytope $\mathcal{L}_{\mathbf{i}^{\tilde{D}_n}}(\lambda)$ is given by the set of all points $$(u_{1,1}^+,u_{1,2}^+,u_{2,2}^+,\dots,u_{1,n-1}^+,\dots,u_{n-1,n-1}^+)\in\Z^{\frac{N}{2}}_{\geq 0}$$ satisfying the inequalities
\[\sum\limits_{k=j}^{n-1}u_{i,k}^+\leq \sum\limits_{k=j+1}^{n-1}u_{i+1,k}^+ +\lambda_i, \quad u_{n-1,n-1}^+\leq\lambda_n,\;\forall \; 1\leq i<j\leq n-1,
\]
\[
      \sum\limits_{k=i}^{j}u_{k,j}^+ +\sum\limits_{k=j+2}^{n-1}u_{j+1,k}^+\leq \sum\limits_{k=i+1}^{j}u_{k,j+1}^+ +\sum\limits_{k=j+2}^{n-1}u_{j+2,k}^+ +\lambda_{j+1},\;\forall \; 1\leq i\leq j< n-1.
\]    

\end{thm*}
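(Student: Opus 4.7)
The plan is to derive the Lusztig description by transferring the already-established string branching polytope through Morier-Genoud's linear bijection between string and Lusztig parametrizations, specialized to the reduced word $\mathbf{i}^{\tilde{D}_n}$. All the ingredients are in place: the string cone is given by the first main theorem, the decomposition (\ref{branchingD}) isolates the branching half of the parametrization in the $t^+$ variables, Littelmann's formula \cite{Li98} provides the additional $\lambda$-dependent facets, and Morier-Genoud's map from \cite{MG08} provides the change of coordinates $t^+_{i,j}\leftrightarrow u^+_{i,j}$.

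First I would assemble the string branching polytope $\mathcal{S}_{\mathbf{i}^{\tilde{D}_n}}(\lambda)$ explicitly. The cone inequalities $t^+_{i,j}\geq t^+_{i+1,j}$ and $t^+_{i,j}\geq t^+_{i,j+1}$ come from the first main theorem; applying Littelmann's recursive formula to $\mathbf{i}^{D_n}$ and exploiting the product-of-cones structure already used in the introduction yields the $\lambda$-dependent inequalities cutting out the polytope from the cone, and these involve only the $t^+$ coordinates on the $\mathbf{i}^{\tilde{D}_n}$-half. Next I would substitute into Morier-Genoud's linear bijection for the word $\mathbf{i}^{\tilde{D}_n}$: her map is determined by the specific braid-type pattern of $\mathbf{i}^{\tilde{D}_n}$, so one first reads off the unimodular transition matrix, then rewrites each string inequality in terms of the Lusztig variables $u^+_{i,j}$. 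Finally I would simplify; the expected telescoping in partial sums $\sum_k u^+_{i,k}$ should collapse the substituted inequalities into the two stated families, and I would check that no further non-redundant inequalities survive.

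The main obstacle is the last step, because $\mathbf{i}^{\tilde{D}_n}$ is not nice in Littelmann's sense and the substitution yields a somewhat cluttered expression whose reorganization is delicate. The mixed index ranges in the second family—with the jump between $u^+_{j+1,k}$ on the left and $u^+_{j+2,k}$ on the right and the shift in starting index of the $k$-sums—strongly suggests that the simplification works via a diagonal-by-diagonal cancellation in the $(i,j)$ grid; identifying the exact telescoping and verifying that the claimed inequalities are complete and minimal is where the real work lies. I would test the substitution by hand for $n=3$ and $n=4$ first, since in those cases the string polytope can be written out explicitly and the target inequalities are short enough to match term by term, and then extend the matching pattern to arbitrary $n$ by induction on the diagonal index $j-i$.
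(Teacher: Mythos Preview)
Your plan is correct and matches the paper's approach: the paper likewise starts from the string branching polytope (cone inequalities from the first main theorem together with Littelmann's $\lambda$-inequalities from Proposition~\ref{Proppoly}) and pushes everything through Morier-Genoud's map $\psi$ from Theorem~\ref{LusztigString}, after first writing down the convex order on positive roots induced by $\mathbf{i}^{D_n}$ so that $\psi(u)^+_{i,j}$ can be computed in closed form. The execution is more direct than you anticipate---no induction or small-$n$ checks are needed, since once $\psi(u)^+_{i,j}$ is written out explicitly the string inequalities $\psi(u)^+_{i,j}\geq\psi(u)^+_{i+1,j}$ and $\psi(u)^+_{i,j}\geq\psi(u)^+_{i,j+1}$ collapse immediately to the two displayed families, while the $\lambda$-inequalities~(\ref{ineqpoly}) are seen via $\varphi$ to be exactly $u^+_{i,j}\geq 0$.
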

In both the string and the Lusztig parametrization, the similarity to the $A_{n-1}$ case is remarkable. There, for $\mathbf{i}^{A_{n-1}}$, the Lusztig polytope is the image of the projection of the polytope defined in Theorem \ref{thmLP} (1) onto the $u^-$ variables.\\\par

It is worth mentioning that the embedding of $A_{n-1}$ in $C_n$ and $D_n$ we use is also used to construct symplectic ball embeddings for studying the coadjoint orbit and the Gromov width by Fang, Littelmann and Pabiniak \cite{FLP17}. See Theorem 7.5 and Example 7.7 there for more details.\par
In upcoming papers we will study the branching algebras encoding the branching rules describing the multiplicities in the decomposition (\ref{branchingD}). Moreover, we will study the generators of the monoid $\Gamma$.
In the $A_n$ case, using the ordering associated to $\mathbf{i}^{A_n}$, this monoid is generated by all tuples $(\omega_i,t)$, $t\in \mathcal{S}_{\mathbf{i}^{A_n}}(\omega_i)$. This is not true for the bases we construct for $B_n,\;C_n$ and $D_n$ using the orderings associated to $\mathbf{i}^{B_n}$, $\mathbf{i}^{C_n}$ and $\mathbf{i}^{D_n}$, so we want to find a finite set of weights, such that the corresponding tuples generate the monoid.\\\par

The paper is organized as follows: 
We start by recalling the definition of the string cone and some useful results about it. Next, we recall the definition of the Lusztig cone from \cite{Lu93} and the connection to the string cone via the maps described in \cite{MG08}.\par
In section~\ref{section3} we will bring together the results from Berenstein-Zelevinsky and Littelmann to explicitly give the inequalities for the string cone for the case $\mathfrak{g}=\mathfrak{so}_{2n}$ and a special choice of the reduced decomposition of $\omega_0$. In section \ref{section4} and \ref{section5} we calculate the inequalities for $\mathfrak{g}=\mathfrak{so}_{2n+1}$ and $\mathfrak{g}=\mathfrak{sp}_{2n}$. The only difference in the latter case is that we get a factor $2$ in some of the inequalities here.\par
In section \ref{section6} we take a closer look at the branching, describing the \textit{purely} $D_n$ respectively $B_n$ or $C_n$ part of the string polytope and then transfer our result to the Lusztig polytope using the linear maps from \cite{MG08}.
\subsection*{Acknowledgments} The author would like to thank Xin Fang and Ghislain Fourier for many inspiring discussions and a lot of helpful comments during the process of writing this paper. Furthermore, he would like  to thank Oksana Yakimova for explaining some ideas of her branching approach and Christian Steinert and George Balla for pointing out some misprints and incomprehensibility.

\section{String and Lusztig cones and polytopes}\label{section2}
We now want to collect the most important results about string and Lusztig cones, which we will later need to describe one special string cone for each $B_n$, $C_n$ and $D_n$ explicitly. Moreover, we will introduce what we call the \textit{branching polytope} to motivate the upcoming chapters.
\subsection{Preliminaries}
We first fix our general setup. Let $\mathfrak{g}$ be a finite-dimensional, semisimple, complex Lie algebra with a semisimple Lie subalgebra $\mathfrak{g}'$ and $\mathfrak{g}=\mathfrak{n}^-\oplus\mathfrak{h}\oplus\mathfrak{n}^+$ a Cartan decomposition with Cartan subalgebra $\mathfrak{h}$ and Borel subalgebra $\mathfrak{b}=\mathfrak{h}\oplus\mathfrak{n}^+$. 
Let $\alpha_1,\dots,\alpha_n$ be the simple roots of $\mathfrak{g}$ and $W$ be the Weyl group generated by $s_1,\dots,s_n$. Let $\omega_0$ be the element of maximal length in $W$ and denote by $R(\omega_0)$ the set of reduced decompositions of $\omega_0$. Let $(a_{i,j})$ be the Cartan matrix  and $w_1,\dots,w_n$ be the fundamental weights of $\mathfrak{g}$. \par
Let $\mathcal{U}^+$ be the universal enveloping algebra of $\mathfrak{n}^+$ generated by $E_1,\dots,E_n$. We define a grading on $\mathcal{U}^+$ using the root lattice of $\mathfrak{g}$ by
$$\deg(E_i)=\alpha_i.$$

\comment{
We now want to define the quantum enveloping algebra of $\mathfrak{g}$. Therefore, we fix positive integers $d_1,\dots ,d_n$ such that $d_ia_{i,j}=d_ja_{j,i}$.
\begin{defn}
For any Lie algebra $\mathfrak{g}$ we define the quantized enveloping algebra $U=U_q(\mathfrak{g})$ as the $\C(q)$-algebra with unit generated by the elements $E_i$, $K_i^{\pm 1},$ and $F_i$ for $i=1,\dots,n$ subject to the relations
\begin{equation}\notag
\begin{aligned}
K_iK_j&=K_jK_i,\\
K_iE_jK_i^{-1}&=q^{d_ia_{i,j}}E_j,\\
K_iF_jK_i^{-1}&=q^{-d_ia_{i,j}}E_j,\\
E_iF_j-F_jE_i&=\delta_{i,j}\frac{K_i-K_i^{-1}}{q^{d_i}-q^{-d_i}}
\end{aligned}    
\end{equation}
for all $i$ and $j$ and the quantum Serre relations
\begin{equation*}
    \sum_{k+l=1-a_{i,j}}(-1)^k E_i^{(k)}E_jE_i^{(l)}=\sum_{k+l=1-a_{i,j}}(-1)^k F_i^{(k)}F_j F_i^{(l)}=0
\end{equation*}
for $i\neq j$. Here $E_i^{(k)}$ and $F_i^{(k)}$ stand for the divided powers defined by
$$E_i^{(k)}=\frac{1}{[1]_i[2]_i\dots[k]_i}E_i^k,\;F_i^{(k)}=\frac{1}{[1]_i[2]_i\dots[k]_i}F_i^k,$$
where 
$$[Li]_i=\frac{q^{d_il}-q^{-d_il}}{q^{d_i}-q^{-d_i}}.$$
We define a grading on $u$ using the root lattice of $\mathfrak{g}$ by
$$\deg(K_i)=0,\;\deg(E_i)=-\deg(F_i)=\alpha_i.$$

\end{defn}
Let $\mathcal{U}^+$ denote the subalgebra of $U$ generated by $E_1,\dots,E_n$.
}
Lusztig defined a basis of $\mathcal{U}^+$ which is denoted by $\mathfrak{B}$ and is called the \textit{canonical basis}. See \cite{Lu93} for the definition of this basis.

\subsection{Definition of string cones}

We now recall the string parametrization of the basis dual to $\mathfrak{B}$, which was first introduced in \cite{BZ96},\cite{GP00}.  Therefore, let $V$ be a $\mathcal{U}^+$-module such that each $E_i$ acts on $V$ as a locally nilpotent operator. This means, for any $v\in V$ there exists a positive integer $m$ such that $E_i^m(v)=0$. We denote by $c_i(v)$ the maximal integer $m$ such that $E_i^m(v)\neq 0$.
\begin{defn}\label{defString}
For any $v\in V$ and $\mathbf{i}\in R(\omega_0)$ we define the string of $v$ in direction $\mathbf{i}$ as $c_{\mathbf{i}}(v)=(t_1,\dots,t_N)$, where $N$ is the length of $\mathbf{i}$ and
$$t_1=c_{i_1}(v),t_2=c_{i_2}(E_{i_1}^{t_1}(v)),\dots,t_N=c_{i_N}(E_{i_{N-1}}^{t_{N-1}}\dots E_{i_1}^{t_1}(v)).$$
\end{defn}
Now we define an $\mathcal{U}^+$-module $\mathcal{A}$ as follows. As a $\C$-vector space, $\mathcal{A}$ is the restricted dual vector space of $\mathcal{U}^+$, i.e. the direct sum of dual spaces of all homogeneous (with respect to the degree defined above) components of $\mathcal{U}^+$. 
The action of $\mathcal{U}^+$ on $\mathcal{A}$ is given by $E(f)(u)=f(E^{\iota}u)$, where $\iota$ is the involutive $\C$-algebra antiautomorphism $E\rightarrow E^{\iota}$  of $\mathcal{U}^+$ such that $E_i^{\iota}=E_i$ for all $i$. As each $E_i$ acts in $\mathcal{A}$ as a locally nilpotent operator, we can apply our definition of strings to elements of $\mathcal{A}.$ \par
Now let $\mathfrak{B}^{\mathrm{dual}}$ be the basis of $\mathcal{A}$ dual to the canonical basis $\mathfrak{B}$, called the dual canonical basis. From \cite{BZ01} we get the following result.
\begin{prop}
For any $\mathbf{i}\in R(\omega_0)$, the string parametrization $c_\mathbf{i}$ is a bijection of $\mathfrak{B}^{\mathrm{dual}}$ onto the set of all lattice points $\mathcal{S}_{\mathbf{i}}^L$ of some rational polyhedral convex
cone $\mathcal{S}_{\mathbf{i}}$ in $\R^N$.
\end{prop}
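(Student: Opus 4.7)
The plan is to prove the proposition in two stages. The first is a largely formal bijection argument; the second, identifying the image as the lattice points of a rational polyhedral cone, is the substantive content.

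For the bijection, I would exploit the fact that the dual canonical basis carries a Kashiwara-type crystal structure induced from $\mathfrak{B}$: the locally nilpotent action of each $E_i$ on $\mathcal{A}$ restricts to $\mathfrak{B}^{\mathrm{dual}}$ in a way that mirrors the crystal raising operators. Under this identification, the integer $t_k$ in Definition~\ref{defString} is precisely the $k$-th string coordinate of the crystal element attached to $v$. The element $v$ can then be recovered from the tuple $(t_1,\dots,t_N)$ by iteratively applying the corresponding lowering operators to the highest-weight vector, which yields both the injectivity of $c_{\mathbf{i}}$ and a clean description of the image set $\mathcal{S}_{\mathbf{i}}^L\subseteq\mathbb{Z}^N_{\geq 0}$.

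For the polyhedrality, I would invoke the Berenstein--Zelevinsky theory of transition maps between reduced words. For any $\mathbf{i},\mathbf{i}'\in R(\omega_0)$, the change-of-coordinates map $c_{\mathbf{i}}(b)\mapsto c_{\mathbf{i}'}(b)$ is the tropicalization of a positive birational transformation between the corresponding factorizations of $U^-$, and is therefore piecewise-linear with integer coefficients. Such a map sends the lattice points of a rational polyhedral cone onto the lattice points of another rational polyhedral cone, so it suffices to establish the claim for a single well-chosen reduced word. For a standard choice, such as one of Littelmann's \emph{nice} words (or, in type $A$, the reduced word compatible with Gelfand--Tsetlin patterns), the cone is cut out by a finite list of explicit inequalities, and polyhedrality can be read off directly.

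The hardest step I anticipate is proving that every lattice point in the putative cone is actually attained by some $b\in\mathfrak{B}^{\mathrm{dual}}$, rather than only showing the easier inclusion that the strings lie inside the cone. This surjectivity hinges on positivity and integrality properties of the canonical basis together with the $\mathbf{i}$-trail inequalities of Berenstein--Zelevinsky, and my plan would be to cite the deep inputs from \cite{Lu93}, \cite{Ka93}, and \cite{BZ01} rather than to reprove them here.
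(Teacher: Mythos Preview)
The paper does not prove this proposition at all: it is stated immediately after the sentence ``From \cite{BZ01} we get the following result'' and is simply imported from Berenstein--Zelevinsky without argument. So there is no ``paper's own proof'' to compare against; your sketch is already far more than the paper provides.

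That said, your outline is broadly faithful to how the result is actually established in the literature. The bijection via crystal operators and the use of piecewise-linear transition maps between reduced words to reduce to a single $\mathbf{i}$ for which the cone is explicit is indeed the Berenstein--Zelevinsky/Littelmann strategy. One caveat: your reduction step (``it suffices to establish the claim for a single well-chosen reduced word'') is a bit delicate as stated, because a tropical piecewise-linear bijection need not send an arbitrary rational polyhedral cone to a rational polyhedral cone---it can send it to a finite union of such. What makes the argument work is additional structure of these particular transition maps (they and their inverses are tropicalizations of \emph{positive} birational maps, and one checks the image is again a cone, not merely a fan). In practice \cite{BZ01} handles this, and since you already plan to cite that paper for the hard inputs, your proposal is fine; just be aware that this step is not purely formal.
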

This cone is refered to as the \textit{string cone}.

\subsection{The Berenstein-Zelevinsky-Cone}
In \cite{BZ01} Bereinstein and Zelevinsky describe for arbitrary $\mathfrak{g}$ a cone which contains the string cone and in the case $\mathfrak{g}=\mathfrak{sl}_n$ coincides with it. To recall their results, we need some more notation: 
We define for each $i$ in $\{1,\dots,n\}$ $W_{\hat{i}}$ to be the minimal parabolic subgroup in $W$ generated by all $s_j$ with $j\neq i$ and denote by $z^{(i)}$ the minimal representative of the coset $W_{\hat{i}}s_i\omega_0$ in $W$.

\begin{thm}\label{BZcone}
Let $\mathbf{i}= (i_1,...,i_N)\in R(\omega_0)$. For any $i\in\{1,\dots,n\}$ and any subword $(i_{j_1},\dots,i_{j_r})$ of $\mathbf{i}$ which is a reduced word for $z^{(i)}$, all the points $(t_1,\dots,t_N)$ in the string cone $\mathcal{S}_{\mathbf{i}}$ satisfy the inequality
\begin{equation}
    \sum_{p=0}^r\sum_{j_p<k<j_{p+1}}(s_{i_{j_1}}\dots s_{i_{j_p}}\alpha_{i_k})(w_i^{\vee})\cdot t_k\geq 0
\end{equation}
(with the convention that $j_0 = 0$ and $j_{r+1} =N+1$). 
\end{thm}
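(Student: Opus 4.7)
The stated inequality is Theorem~3.14 of \cite{BZ01}; my plan is to outline how to recover it using the geometry of the positive part of $U^-$ together with the Berenstein--Zelevinsky $i$-trail calculus on fundamental representations. The first ingredient is the identification, implicit in the proposition recalled above, of $\mathcal{S}_{\mathbf{i}}$ with the tropicalization of the totally positive part of $U^-$ expressed in Lusztig's $\mathbf{i}$-coordinates $(t_1,\dots,t_N)$. Under this identification, every regular function $f$ on $U^-$ whose expansion in these coordinates has nonnegative coefficients tropicalizes to a piecewise-linear function which is $\geq 0$ on every lattice point of $\mathcal{S}_{\mathbf{i}}$. Hence, to produce the claimed linear inequality, it suffices to exhibit a regular function on $U^-$ whose monomial expansion contains, for each reduced subword $(i_{j_1},\dots,i_{j_r})$ of $\mathbf{i}$ realizing $z^{(i)}$, a summand with positive coefficient whose exponent in $t_k$ equals the scalar $(s_{i_{j_1}}\cdots s_{i_{j_p}}\alpha_{i_k})(w_i^{\vee})$ for $j_p<k<j_{p+1}$.

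The natural candidate is the matrix coefficient $\Delta_i$ on $U^-$ which pairs, inside the fundamental representation $V(w_i)$, the highest-weight vector with the extremal weight vector of weight $z^{(i)} w_i$. Expanding $\Delta_i$ in the $\mathbf{i}$-coordinates via the Chevalley commutation rules, each reduced subword for $z^{(i)}$ contributes exactly one monomial: after the first $p$ chosen reflections have been effectuated, the partial vector lies in the $s_{i_{j_1}}\cdots s_{i_{j_p}} w_i$-weight space, and each subsequent application of $E_{i_k}^{t_k}$ with $j_p<k<j_{p+1}$ shifts the accumulated weight by $t_k\alpha_{i_k}$; pairing with $w_i^{\vee}$ produces precisely the coefficient in the statement. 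Minimality of $z^{(i)}$ in its $W_{\hat{i}}$-coset guarantees that the intermediate weights $s_{i_{j_1}}\cdots s_{i_{j_p}} w_i$ are pairwise distinct, so the contributions from distinct trails do not cancel.

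Combining the two ingredients, tropicalizing the positivity $\Delta_i\geq 0$ at a lattice point $(t_1,\dots,t_N)\in\mathcal{S}_{\mathbf{i}}^{L}$ yields $\min\sum_k(\cdots)t_k\geq 0$, where the minimum runs over all reduced subwords for $z^{(i)}$ inside $\mathbf{i}$; selecting the specific subword fixed in the theorem delivers the claimed inequality.

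The main obstacle is the middle step: rigorously verifying the monomial expansion of $\Delta_i$ with nonnegative integer coefficients and with exponents matching the asserted linear forms. This is the content of Berenstein--Zelevinsky's $i$-trail lemma, whose combinatorial core is a bijection between reduced subwords of $\mathbf{i}$ realizing $z^{(i)}$ and $i$-trails from $w_i$ to $z^{(i)} w_i$ in $V(w_i)$. Once this dictionary is set up, the tropical positivity argument delivers the theorem uniformly in the choice of subword, and the dependence on the Weyl-group chain entering through $s_{i_{j_1}}\cdots s_{i_{j_p}}$ follows formally from tracking the weight of the partial vector block by block.
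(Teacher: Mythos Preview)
The paper does not give its own proof of this theorem: it is stated as a quotation from \cite{BZ01} (Theorem~3.14 there) and used as a black box throughout. So there is nothing to compare your argument against on the paper's side beyond the citation, which you correctly identify.

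Your sketch is broadly faithful to the Berenstein--Zelevinsky strategy (generalized minors, $i$-trails, tropical positivity), but a couple of points are imprecise. First, you describe $\mathcal{S}_{\mathbf{i}}$ as the tropicalization of $U^-_{>0}$ in ``Lusztig's $\mathbf{i}$-coordinates''; in \cite{BZ01} the string cone arises instead from tropicalizing the twist map (or equivalently the transition between two Lusztig charts), not from a single Lusztig chart, and the distinction matters for getting the correct linear forms. Second, the claim that each reduced subword for $z^{(i)}$ contributes exactly one monomial to $\Delta_i$ with the stated exponents is precisely the $i$-trail lemma you invoke at the end, so the ``middle step'' you flag as the main obstacle is in fact the entire content of the argument; the rest is packaging. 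If you intend this as more than a pointer to \cite{BZ01}, you would need to supply that lemma or at least state it precisely, since the bijection between subwords and trails and the nonnegativity of all coefficients are nontrivial. For the purposes of this paper, simply citing \cite{BZ01} as the authors do is sufficient.
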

We now recall another theorem from \cite{BZ01}. Therefore, for any subset $I\subseteq \{1,\dots,n\}$ we define $\omega_0(I)$ as the longest element of the parabolic subgroup in $W$ generated by all the permutations $s_i$ with $i\in I$.
\begin{thm}\label{coneproduct}
Let $\emptyset\subset I_0\subset I_1\subset\dots\subset I_p=\{1,\dots,n\}$ be any flag of subsets in $\{1,\dots,n\}$. Suppose $\mathbf{i}\in R(\omega_0)$ is the concatenation $(\mathbf{i}^{(1)},\dots,\mathbf{i}^{(p)})$ where $\mathbf{i}^{(j)}\in R(\omega_0(I_{j-1})^{-1}\omega_0(I_j))$ for $j=1,\dots,p$. Then the string cone $\mathcal{S}_{\mathbf{i}}$ is the direct product of cones:
\begin{equation*}
    \mathcal{S}_{\mathbf{i}}=\mathcal{S}_{\mathbf{i}^{(1)}}(e,\omega_0(I_1))\times \mathcal{S}_{\mathbf{i}^{(2)}}(\omega_0(I_1),\omega_0(I_2))\times\dots\times \mathcal{S}_{\mathbf{i}^{(p)}}(\omega_0(I_{p-1}),\omega_0(I_p)).
\end{equation*}
\end{thm}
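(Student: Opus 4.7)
The plan is to argue by induction on $p$. The case $p=1$ is vacuous, and given the case $p=2$ one may peel off the last segment of the flag, applying the $p=2$ statement to the splitting $I_{p-1}\subset I_p$, and then apply the induction hypothesis to the sub-flag $\emptyset\subset I_0\subset\dots\subset I_{p-1}$ inside $I_{p-1}$; concatenating the two product decompositions yields the full one. This reduces the proof to a single non-trivial splitting $\emptyset\subset I\subset\{1,\dots,n\}$ with associated concatenation $\mathbf{i}=(\mathbf{i}',\mathbf{i}'')$, where $\mathbf{i}'\in R(\omega_0(I))$ and $\mathbf{i}''\in R(\omega_0(I)^{-1}\omega_0)$.

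For this core case, the natural setting is Kashiwara's crystal $B(\infty)$ of $\mathcal{U}^+$. The string $c_{\mathbf{i}}(b)=(t_1,\dots,t_N)$ records the exponents in the sequence of Kashiwara operators $\tilde{e}_{i_1}^{t_1}\tilde{e}_{i_2}^{t_2}\cdots\tilde{e}_{i_N}^{t_N}$ that transport $b\in B(\infty)$ to the highest weight element; this is the crystal-theoretic translation of Definition~\ref{defString} applied to $\mathfrak{B}^{\mathrm{dual}}$. Since every letter of $\mathbf{i}'$ lies in $I$, the partial product $b'=\tilde{e}_{i_1}^{t_1}\cdots\tilde{e}_{i_{N_1}}^{t_{N_1}}(b)$ is automatically annihilated by every $\tilde{e}_i$ with $i\in I$, so $b'$ is an $I$-highest weight element. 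The first $N_1$ coordinates are then precisely the string of $b$ inside the Levi sub-crystal $B(\infty)_I$, placing $(t_1,\dots,t_{N_1})$ in the Levi string cone $\mathcal{S}_{\mathbf{i}'}(e,\omega_0(I))$, while the remaining coordinates form the string of $b'$ in direction $\mathbf{i}''$, lying in $\mathcal{S}_{\mathbf{i}''}(\omega_0(I),\omega_0)$ by definition.

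The inclusion $\mathcal{S}_{\mathbf{i}}\subseteq \mathcal{S}_{\mathbf{i}'}(e,\omega_0(I))\times\mathcal{S}_{\mathbf{i}''}(\omega_0(I),\omega_0)$ is an immediate consequence of this crystal decomposition. The converse inclusion --- that every pair of lattice points in the two small cones jointly lifts to a point of $\mathcal{S}_{\mathbf{i}}$ --- is the real content. I would prove it by a surjectivity argument: given the tail data, choose an $I$-highest weight $b'\in \mathfrak{B}^{\mathrm{dual}}$ realizing the prescribed string in direction $\mathbf{i}''$ (which exists by the bijectivity of the string parametrization on the $I$-highest weight locus), and then act on $b'$ by the appropriate $\tilde{f}$-operators with indices in $I$ to lift it to an element $b\in\mathfrak{B}^{\mathrm{dual}}$ whose head coordinates realize the first block.

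The main obstacle will be verifying that this Levi-action does not disturb the tail: one must check that the resulting $b$ has the full string $(t_1,\dots,t_N)$ in direction $\mathbf{i}$, so in particular that its string in direction $\mathbf{i}''$ (computed after first annihilating $b$ with the $\tilde{e}$-operators from $\mathbf{i}'$) recovers $b'$ together with its prescribed tail. This is precisely the statement that $B(\infty)_I\hookrightarrow B(\infty)$ is a sub-crystal for the $I$-action and that the Levi-highest-weight truncation commutes with the crystal operators indexed by $I$ --- standard features of Kashiwara's theory --- and it is here that the proof exploits the crystal structure most substantively. An alternative, more combinatorial, route would proceed by tropicalising Lusztig's transition maps between string parametrisations and showing that, for the concatenated reduced word, the tropical transformation has block-triangular form; the product structure would then follow from the bijectivity of these transitions at the level of lattice points.
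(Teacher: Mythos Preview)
The paper does not prove this theorem at all: it is quoted verbatim from Berenstein--Zelevinsky \cite{BZ01} (introduced in the text with ``We now recall another theorem from \cite{BZ01}''), so there is no in-paper argument to compare your proposal against.

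That said, your outline is the standard route to this result and is essentially how it is proved in \cite{BZ01} (there via the language of the dual canonical basis rather than Kashiwara's $B(\infty)$, but the two are equivalent for this purpose). The reduction to $p=2$ by peeling off the last segment is correct; note that the first factor $\mathcal{S}_{(\mathbf{i}^{(1)},\dots,\mathbf{i}^{(p-1)})}(e,\omega_0(I_{p-1}))$ is literally the string cone of the Levi associated to $I_{p-1}$, so the induction hypothesis applies directly. For the core $p=2$ case your identification of the two inclusions is right, and the point you flag as the ``main obstacle'' is exactly the substantive one. One remark that would sharpen the surjectivity half: in $B(\infty)$ every $\tilde f_i$ is everywhere defined (never returns $0$), so the lift of $b'$ by the prescribed $\tilde f$-string always exists; combined with the fact that $\tilde e_i$-strings for $i\in I$ terminate precisely at $I$-highest weight elements, this gives the needed bijection between $\mathfrak{B}^{\mathrm{dual}}$ and pairs (Levi string, $I$-highest weight element) without further work. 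Your alternative tropical-transition route is also viable and closer in spirit to how Berenstein--Zelevinsky phrase things.
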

We will not need to know the exact definition of each of these cones for our purposes. We just note that the dimension of $\mathcal{S}_{\mathbf{i}^{(k)}}(\omega_0(I_{k-1}),\omega_0(I_k))$ equals the length of $\mathbf{i}^{(k)}$.

\subsection{Littelmann's description of the string cone}
In \cite{Li98}, Littelmann gives a characterization of all lattice points of the string cone by recursive formulas. To give this characterization, we need the following notation.\par
For $t\in\Z^N$ let us define a sequence $m^N,\dots,m^1$ with $m^i\in\Z^i$: We set $m^N=t$ and define recursively $m^{j-1}=(m^{j-1}_1,\dots,m^{j-1}_{j-1})$ for $1\leq j<N$ by
$$m^{j-1}_k:=\min \{m^j_k,\Delta^j(k)\},$$
where
$$\Delta^j(k)=\begin{dcases}\max\{\theta(k,l,j)\mid k<l\leq j,\alpha_{i_l}=\alpha_{i_j}\}&\mathrm{if }\;\alpha_{i_k}=\alpha_{i_j},\\
m^j_k&\mathrm{otherwise},
\end{dcases}$$
with
$$\theta(k,l,j)=m_l^j-\sum_{k<s\leq l} m^j_s\alpha_{i_s}(\alpha^{\vee}_{i_j} ).$$
With this notation, we are now able to recall the following theorem from \cite{Li98}:
\begin{thm}\label{thmrecl}
Let $\mathbf{i}\in R(\omega_0)$, then $t\in \mathcal{S}^L_\mathbf{i}$ if and only if $$\Delta^j(k)\geq 0\;\forall\; 1\leq k< j\leq N.$$
\end{thm}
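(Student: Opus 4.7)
The plan is to reinterpret the string parametrization via the Kashiwara crystal $B(\infty)$: the dual canonical basis $\mathfrak{B}^{\mathrm{dual}}$ is in natural bijection with $B(\infty)$, and for $b\in B(\infty)$ the tuple $c_\mathbf{i}(b)=(t_1,\dots,t_N)$ records the lengths of successive $i_k$-strings, i.e.\ $t_k=\varepsilon_{i_k}\bigl(\tilde e_{i_{k-1}}^{\,t_{k-1}}\cdots \tilde e_{i_1}^{\,t_1}b\bigr)$. Under this identification, $\mathcal{S}_\mathbf{i}^L$ is the image of $B(\infty)$ under $c_\mathbf{i}$, so the goal becomes showing that the image of $c_\mathbf{i}$ equals the set of lattice points satisfying $\Delta^j(k)\geq 0$ for $1\leq k<j\leq N$. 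I would interpret the auxiliary tuples $m^j=(m^j_1,\dots,m^j_j)$ as ``strings after truncation at position $j$'': informally, $m^j_k$ is what the $k$-th string coordinate would be if one applied only the first $j$ Kashiwara operators, and the passage $m^j\rightsquigarrow m^{j-1}$ records how these coordinates must be adjusted once the $j$-th operator is absorbed into the earlier ones.

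For the necessity direction, given $t=c_\mathbf{i}(b)$ I would use the universal crystal inequality $\varepsilon_i(\tilde e_j x)\geq \varepsilon_i(x)-a_{j,i}$, valid in any normal crystal. Iterating this between positions $k$ and $l$ with $\alpha_{i_k}=\alpha_{i_l}=\alpha_{i_j}$ produces exactly the lower bound $\theta(k,l,j)$; taking the maximum over all permissible intermediate $l$ yields $\Delta^j(k)$, and positivity of $\varepsilon$ forces $\Delta^j(k)\geq 0$. For sufficiency, given a tuple satisfying all $\Delta^j(k)\geq 0$, I would construct the corresponding basis element by $b=\tilde f_{i_1}^{\,t_1}\cdots \tilde f_{i_N}^{\,t_N}\,b_\infty\in B(\infty)$, and verify by backward induction on $j$ that $b\neq 0$ and $c_\mathbf{i}(b)=t$. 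The $\min\{m^j_k,\Delta^j(k)\}$ in the recursion ensures at each stage that the operator at position $k<j$ neither overshoots the available string length nor violates the lower bound coming from the later operators of the same color.

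The main obstacle is the combinatorial bookkeeping that links the $\min$/$\max$ structure in the definition of $m^{j-1}$ to the precise effect of $\tilde e_{i_j}^{\,t_j}$ on strings at positions $k<j$: one has to disentangle contributions from different colors (where the Cartan pairing $\alpha_{i_s}(\alpha_{i_j}^{\vee})$ intervenes through $\theta(k,l,j)$) and show that the truncation $m^j_k\mapsto \min\{m^j_k,\Delta^j(k)\}$ is neither too weak nor too strong. A clean way to organize this would be to either invoke the axiomatic characterization of $B(\infty)$, or, following Littelmann, to argue directly via the path model, where the recursion on $j$ corresponds to successively folding back the last segment of a Lakshmibai--Seshadri path along the reflection $s_{i_j}$.
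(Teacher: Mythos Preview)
The paper does not prove this theorem at all: it is introduced with ``we are now able to recall the following theorem from \cite{Li98}'' and is used as a black box from Littelmann's work. There is therefore no proof in the paper to compare your proposal against.

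That said, your outline is in the right spirit and indeed close to how Littelmann establishes the result in \cite{Li98}: he works in the path model (which, as you note and as the paper also remarks via \cite{Ka96}, \cite{Jo94}, is crystal-isomorphic to $B(\infty)$), and the recursion $m^j\mapsto m^{j-1}$ is exactly the combinatorial shadow of absorbing the $j$-th root operator into the earlier ones. Your necessity argument via the inequality $\varepsilon_i(\tilde e_j x)\geq \varepsilon_i(x)-\langle\alpha_j,\alpha_i^\vee\rangle$ is the correct mechanism, and the sufficiency direction by constructing $b=\tilde f_{i_1}^{\,t_1}\cdots\tilde f_{i_N}^{\,t_N}b_\infty$ and verifying $c_\mathbf{i}(b)=t$ by backward induction is the natural strategy. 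What you have written is a plausible plan rather than a proof: the ``main obstacle'' you identify---making precise the link between the $\min/\max$ structure and the effect of $\tilde e_{i_j}^{\,t_j}$ on earlier string lengths---is exactly where the work lies, and Littelmann's argument handles it by a careful case analysis in the path model rather than by any single clean identity. If you want to turn the sketch into an actual proof, you would need to either reproduce that analysis or give a self-contained crystal-theoretic argument; as it stands the proposal correctly locates the difficulty but does not resolve it.
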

\begin{rem}
Due to Theorem 1.5 from \cite{Li98}, the string cone is a rational cone, which implies that Theorem \ref{thmrecl} can be generalized from the set of lattice points in the string cone to the set of all points in the cone.
\end{rem}

From Littelmann's restriction rule \cite{Li95} we obtain the following Lemma:
\begin{lem} \label{lemres} Let $\mathfrak{l}$ be a Levi subalgebra of $\mathfrak{g}$, $\mathbf{i}_{\mathfrak{g}}=(i_1,\dots,i_N)\in R(\omega_0^{\mathfrak{g}})$ and $\mathbf{i}_{\mathfrak{l}}=(i_{j_1},\dots,i_{j_r})\in R(\omega_0^{\mathfrak{l}})$ be a subword of $\mathbf{i}_{\mathfrak{g}}$. 
Let $\lambda$ be a dominant weight for $\mathfrak{g}$, $v\in V_{\mathfrak{g}}(\lambda)$ be a basis element and $t_v\in\mathcal{S}_{\mathbf{i}_{\mathfrak{g}}}(\lambda)$ the string of $v$ in direction $\mathbf{i}_{\mathfrak{g}}$.
If we consider $V_{\mathfrak{g}}(\lambda)$ as a $\mathfrak{l}$-module, we have $v\in V_{\mathfrak{l}}(\mu)$ for some dominant weight $\mu$ of $\mathfrak{l}$.
Moreover, if for each $1\leq k\leq j_r$ we have $k\in\{j_1,\dots,j_r\}$ or $t_k=0$, then $(t_{i_{j_1}},\dots,t_{i_{j_r}})$ is the string of $v$ in direction $\mathbf{i}_{\mathfrak{l}}$.
\end{lem}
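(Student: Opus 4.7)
The plan is to handle the two assertions of the lemma in turn. For the first assertion, that each basis vector $v$ of $V_{\mathfrak{g}}(\lambda)$ lies in a single irreducible $\mathfrak{l}$-submodule $V_{\mathfrak{l}}(\mu)\subset V_{\mathfrak{g}}(\lambda)$, I would invoke Littelmann's restriction rule \cite{Li95}, translated from the path model to the dual canonical basis: the crystal $B(\lambda)$, regarded as an $\mathfrak{l}$-crystal by keeping only the Kashiwara operators indexed by simple roots of $\mathfrak{l}$, decomposes into a disjoint union of connected $\mathfrak{l}$-crystals isomorphic to various $B_{\mathfrak{l}}(\mu)$. Since the string parametrization $c_{\mathbf{i}_{\mathfrak{g}}}$ identifies the dual canonical basis of $V_{\mathfrak{g}}(\lambda)$ with $B(\lambda)$, the basis vectors partition accordingly into the irreducible $\mathfrak{l}$-summands of $V_{\mathfrak{g}}(\lambda)$.

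For the second assertion I would unwind the recursive definition of the string directly. Set $v_0:=v$ and $v_m:=E_{i_m}^{t_m}(v_{m-1})$ for $m=1,\dots,N$, so by construction $t_m=c_{i_m}(v_{m-1})$. Whenever $1\leq k\leq j_r$ with $k\notin\{j_1,\dots,j_r\}$, the hypothesis forces $t_k=0$, hence $E_{i_k}^{t_k}=\mathrm{id}$ and $v_k=v_{k-1}$. In particular $v_{j_l-1}=v_{j_{l-1}}$ for every $1\leq l\leq r$ (with the convention $j_0=0$). Let $w_0:=v$ and $w_l:=E_{i_{j_l}}^{\tilde{t}_l}(w_{l-1})$ with $\tilde{t}_l:=c_{i_{j_l}}(w_{l-1})$ be the intermediate vectors of the $\mathbf{i}_{\mathfrak{l}}$-string. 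A straightforward induction on $l$ shows $w_l=v_{j_l}$ and $\tilde{t}_l=t_{j_l}$: by the first assertion and since each $E_{i_{j_p}}$ preserves $V_{\mathfrak{l}}(\mu)$ (because $i_{j_p}$ is a simple root of $\mathfrak{l}$), the computation stays inside $V_{\mathfrak{l}}(\mu)$ throughout, and because $c_i$ depends only on the action of $E_i$ on the given vector and not on the ambient module, one obtains $\tilde{t}_l=c_{i_{j_l}}(v_{j_{l-1}})=c_{i_{j_l}}(v_{j_l-1})=t_{j_l}$, after which applying $E_{i_{j_l}}^{t_{j_l}}$ to both $w_{l-1}$ and $v_{j_{l-1}}$ propagates the induction hypothesis.

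The main obstacle is the first assertion: the branching of the dual canonical basis under Levi restriction, essentially Littelmann's restriction rule repackaged at the level of bases, is the only nontrivial input. The second assertion then amounts to the observation that the hypothesis $t_k=0$ at non-subword positions is precisely the condition needed to ensure that the $\mathbf{i}_{\mathfrak{g}}$-string recursion, truncated to the subword positions, agrees with the $\mathbf{i}_{\mathfrak{l}}$-string recursion carried out inside $V_{\mathfrak{l}}(\mu)$.
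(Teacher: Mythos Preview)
Your proposal is correct and follows essentially the same approach as the paper: Littelmann's restriction rule (transported to the crystal/string setting via the Kashiwara--Joseph isomorphism) for the first assertion, and unwinding the definition of the string parametrization for the second. Your inductive argument for the second part merely spells out in detail what the paper compresses into the phrase ``by the definition of the string parametrization.''
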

\begin{proof}
Due to Littelmann's restriction rule we know that -- considered as $\mathfrak{l}$-module -- $V_{\mathfrak{g}}(\lambda)$ decomposes into the direct sum of some highest weight $\mathfrak{l}$-modules. As $v$ is a basis element, it already has to be contained in one of these modules.\\
Due to Kashiwara \cite{Ka96} and Joseph \cite{Jo94}, the graph for Littelmann's path model is isomorphic to Kashiwara's crystal graph, so we may use the string language here. 
If for each $1\leq k\leq j_r$ we have $k\in\{j_1,\dots,j_r\}$ or $t_k=0$ , we get $c_{\mathbf{i}_{\mathfrak{g}}}(v)=c_{\mathbf{i}_{\mathfrak{l}}}(v)$ by the definition of the string parametrization, which proves the second statement of the Lemma.
\end{proof}
\subsection{Lusztig cones}
Whereas the string parametrization of $\mathcal{B}$ just works with simple roots, the Lusztig parametrization works with the set of all positive roots of $\mathfrak{g}$. We therefore fix an ordering on the set of positive roots which depends on $\mathbf{i}$. We set $\beta_{\mathbf{i},k}=s_{i_1}\dots s_{i_{k-1}}(\alpha_{i_k})$, which defines the ordering $\beta_{\mathbf{i},1}<\dots<\beta_{\mathbf{i},N}.$ Let $F_1,\dots,F_n$ be the generators of $\mathcal{U}^-$, the enveloping algebra of the subalgebra $\mathfrak{n}^-.$ We denote by $F_i^{(r)}$ the \textit{divided power} of $F_i$, which just differs from $F_i^r$ by a multiple that is not of interest for our purposes.
\begin{defn}
Let $\mathbf{i}\in R(\omega_0)$ and $v$ be a cyclic generator of $\mathfrak{n}^-$. For any $u\in\R^N_{\geq 0}$ we set $F_{\mathbf{i}}(u)=F_{\beta_{\mathbf{i},1}}^{(u_1)}\dots F_{\beta_{\mathbf{i},N}}^{(u_N)}v$.
\end{defn}
The set $\{F_{\mathbf{i}}(u)\mid u\in\Z_{\geq 0}^N\}$ is a PBW-type basis of $\mathcal{U}^-$. Now, Lusztig \cite{Lu96} associates to each $F_{\mathbf{i}}(u)$ an element $b_{\mathbf{i}}(u)$ in the canonical basis $\mathfrak{B}$.
We call the map $u\rightarrow b_{\mathbf{i}}(u)$ the Lusztig parametrization of $\mathcal{B}.$ The cone $\R^N_{\geq 0}$ is called the Lusztig cone. We denote it by $\mathcal{L}_{\mathbf{i}}$.
\subsection{String and Lusztig polytopes and their connection}
Given a highest weight module $V(\lambda)$, we are interested in polytopes $\mathcal{S}_{\mathbf{i}}(\lambda)\subset \mathcal{S}_{\mathbf{i}}$ and $\mathcal{L}_{\mathbf{i}}(\lambda)\subset \mathcal{L}_{\mathbf{i}}$ whose lattice points label those basis elements in $\mathcal{B}$ which are contained in $V(\lambda)$. We denote them as \textit{string} and \textit{Lusztig polytopes}.
From \cite{MG08} we know the transformation maps between the two parametrizations:
\begin{thm}\label{StringLusztig}
The transformation map $\varphi:\mathcal{S}_{\mathbf{i}}(\lambda)\rightarrow \mathcal{L}_{\mathbf{i}}(\lambda)$ is given by
$$\varphi(t)_k=\lambda_k-t_k-\sum_{j>k}a_{i_ki_j}t_j.$$
\end{thm}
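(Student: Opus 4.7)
The plan is to compute both parametrizations in parallel along the sequence $\mathbf{i}$, interpreting each Lusztig coordinate $u_k$ as an $\mathfrak{sl}_2$-string down-length inside an intermediate vector of the crystal of $V(\lambda)$. Throughout I read $\lambda_k$ in the theorem statement as $\lambda_{i_k} := \langle \lambda, \alpha_{i_k}^{\vee}\rangle$.

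Fix a dual canonical basis element $b \in \mathfrak{B}^{\mathrm{dual}}(\lambda)$ with string $c_{\mathbf{i}}(b) = (t_1,\dots,t_N)$, and for $0 \le k \le N$ set $b^{(k)} := E_{i_k}^{t_k} \cdots E_{i_1}^{t_1}(b)$, the vector obtained after peeling off the first $k$ string coordinates. By Definition~\ref{defString}, $t_k = c_{i_k}(b^{(k-1)})$ is the length of the $i_k$-raising string starting at $b^{(k-1)}$. Since each $E_{i_j}$ raises weight by $\alpha_{i_j}$, the weight of $b^{(k-1)}$ equals $\lambda - \sum_{j\ge k} t_j\alpha_{i_j}$. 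Pairing with the coroot $\alpha_{i_k}^{\vee}$ gives
\[
\bigl\langle \mathrm{wt}(b^{(k-1)}),\, \alpha_{i_k}^{\vee}\bigr\rangle \;=\; \lambda_{i_k} - \sum_{j\ge k} a_{i_k i_j}\, t_j \;=\; \lambda_{i_k} - 2t_k - \sum_{j>k} a_{i_k i_j}\, t_j .
\]
Applying the elementary $\mathfrak{sl}_2$-identity "down-length equals up-length plus weight" inside the $\mathfrak{sl}_2$-subalgebra generated by $E_{i_k}, F_{i_k}$ (the up-length at $b^{(k-1)}$ being $t_k$), the corresponding $i_k$-string down-length equals
\[
t_k + \bigl\langle \mathrm{wt}(b^{(k-1)}), \alpha_{i_k}^{\vee}\bigr\rangle \;=\; \lambda_{i_k} - t_k - \sum_{j>k} a_{i_k i_j}\, t_j \;=\; \varphi(t)_k .
\]
This gives $\varphi(t)_k$ an intrinsic crystal-theoretic meaning.

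It remains to identify this down-length with the $k$-th Lusztig coordinate $u_k$. The PBW root vector satisfies $F_{\beta_{\mathbf{i},k}} = T_{i_1}\cdots T_{i_{k-1}}(F_{i_k})$ for Lusztig's braid automorphisms $T_i$, and these braid operators intertwine the Kashiwara operators $\tilde{e}_{i_j}$ on canonical basis elements. Consequently, applying $T_{i_{k-1}}^{-1}\cdots T_{i_1}^{-1}$ to $b$ transports the $F_{\beta_{\mathbf{i},k}}$-exponent in the Lusztig expansion of $b$ to the $F_{i_k}$-exponent in the expansion of the crystal element underlying $b^{(k-1)}$; in an $\mathfrak{sl}_2$-module this exponent is exactly the $F_{i_k}$-string down-length. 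Hence $u_k$ equals the down-length computed above, so $u = \varphi(t)$.

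The main obstacle is this last identification: relating $u_k$ to a concrete $\mathfrak{sl}_2$-string length requires the compatibility between Lusztig's braid operators and the Kashiwara crystal operators on canonical basis elements, a structural result developed in \cite{Lu93} (Chapters 38--42) and exploited in \cite{MG08}. Once that compatibility is granted, everything else is the linear weight bookkeeping carried out above; linearity and bijectivity of $\varphi$ follow automatically because the system of equations $u_k = \lambda_{i_k} - t_k - \sum_{j>k} a_{i_k i_j} t_j$ is upper-triangular in $t$ with unit diagonal up to sign, so it can be inverted recursively from $k=N$ down to $k=1$.
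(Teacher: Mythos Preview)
The paper does not give its own proof of this statement; it is simply quoted from Morier-Genoud \cite{MG08}. So there is no argument in the paper to compare yours against.

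On the substance of your attempt: the first half is clean and correct. Setting $b^{(k-1)}=\tilde e_{i_{k-1}}^{\,t_{k-1}}\cdots\tilde e_{i_1}^{\,t_1}b$, the weight bookkeeping together with the $\mathfrak{sl}_2$ identity $\varphi_i=\varepsilon_i+\langle\mathrm{wt},\alpha_i^\vee\rangle$ indeed gives
\[
\varphi_{i_k}\bigl(b^{(k-1)}\bigr)=\lambda_{i_k}-t_k-\sum_{j>k}a_{i_ki_j}t_j,
\]
so you have correctly identified the right-hand side of the theorem with the Kashiwara ``down-length'' at the intermediate crystal element.

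The gap is in the second half. The identification $u_k=\varphi_{i_k}(b^{(k-1)})$ is precisely the nontrivial content of the theorem, and your justification via Lusztig's braid automorphisms does not establish it. The $T_i$ are algebra automorphisms of $U_q(\mathfrak g)$; they do \emph{not} preserve the canonical basis, nor do they intertwine Kashiwara's operators $\tilde e_i,\tilde f_i$ in the straightforward sense you invoke. Transporting the $F_{\beta_{\mathbf i,k}}$-exponent of a PBW monomial through $T_{i_{k-1}}^{-1}\cdots T_{i_1}^{-1}$ tells you about a PBW expansion, not about a crystal string-length of the associated canonical basis element. The bridge between the two requires either Saito's crystal reflections or the specific compatibilities in \cite{Lu93} relating PBW filtrations to the $\tilde e_i$-string structure, and making that bridge precise is exactly what \cite{MG08} does. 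You acknowledge this yourself in your last paragraph, but that means your argument reduces to ``the theorem follows from a black-boxed fact equivalent to the theorem,'' which is not yet a proof.
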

\begin{thm}\label{LusztigString}
The transformation map $\psi:\mathcal{L}_{\mathbf{i}}(\lambda)\rightarrow \mathcal{S}_{\mathbf{i}}(\lambda)$ is given by
$$\psi(u)_k=l_k-u_k-\sum_{j>k}d_{i_ki_j}u_j,$$
where $d_{i_ki_j}=\langle\beta_{\mathbf{i},j},\beta^{\vee}_{\mathbf{i},k}\rangle$ and $l_k=\langle\lambda,\beta^{\vee}_{\mathbf{i},k}\rangle$.
\end{thm}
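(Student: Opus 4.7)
The plan is to prove that the map $\psi$ with the stated formula is the inverse of the map $\varphi$ from Theorem \ref{StringLusztig}. Since $\varphi$ is already known to be a bijection between $\mathcal{S}_{\mathbf{i}}(\lambda)$ and $\mathcal{L}_{\mathbf{i}}(\lambda)$ (both parametrize the same canonical basis elements contained in $V(\lambda)$), its set-theoretic inverse exists and is automatically a bijection between the same polytopes; thus the claim reduces to verifying the affine-linear identity $\varphi \circ \psi = \mathrm{id}_{\R^N}$ on all of $\R^N$, which can be checked coordinate by coordinate.

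First I would put both maps in matrix form. Let $A$ be the upper-triangular $N \times N$ matrix with $A_{kk} = 1$ and $A_{kj} = a_{i_k i_j}$ for $k < j$, and write $\boldsymbol{\lambda} = (\lambda_1, \ldots, \lambda_N)^{\top}$, so Theorem \ref{StringLusztig} reads $\varphi(t) = \boldsymbol{\lambda} - A t$. Define $D$ and $\mathbf{l} = (l_1, \ldots, l_N)^{\top}$ analogously, so the proposed $\psi$ becomes $\psi(u) = \mathbf{l} - D u$. Composing yields
\[
\varphi(\psi(u)) = \boldsymbol{\lambda} - A\mathbf{l} + AD\, u,
\]
and since this must equal $u$ for every $u \in \R^N$, the claim splits into the two separate identities $AD = I_N$ and $A\mathbf{l} = \boldsymbol{\lambda}$.

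For $AD = I_N$ only the strict super-diagonal is non-trivial, reducing for $k < j$ to
\[
d_{i_k i_j} + a_{i_k i_j} + \sum_{k < m < j} a_{i_k i_m}\, d_{i_m i_j} = 0.
\]
Using $W$-invariance of the pairing and $\beta_{\mathbf{i},p}^\vee = s_{i_1}\cdots s_{i_{p-1}} \alpha_{i_p}^\vee$, rewrite $d_{i_m i_j} = \langle \gamma_m, \alpha_{i_m}^\vee\rangle$ with $\gamma_m := s_{i_m} \cdots s_{i_{j-1}} \alpha_{i_j}$. Unrolling $\gamma_{m-1} = s_{i_{m-1}} \gamma_m = \gamma_m - \langle \gamma_m, \alpha_{i_{m-1}}^\vee\rangle\, \alpha_{i_{m-1}}$ from $m = j$ down to $m = k$ expresses $\gamma_k$ as a linear combination of $\alpha_{i_j}$ and the $\alpha_{i_m}$ with $k \leq m < j$. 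Using $s_i \alpha_i^\vee = -\alpha_i^\vee$ we get the crucial sign flip $\langle \gamma_{m+1}, \alpha_{i_m}^\vee \rangle = -\langle \gamma_m, \alpha_{i_m}^\vee \rangle = -d_{i_m i_j}$, after which pairing the expansion of $\gamma_k$ with $\alpha_{i_k}^\vee$ and collecting terms reproduces exactly the displayed recursion.

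For $A\mathbf{l} = \boldsymbol{\lambda}$, the required equality $\lambda_k = l_k + \sum_{j > k} a_{i_k i_j} l_j$ must hold for all dominant $\lambda$, so by linearity it is equivalent to the coroot identity $\alpha_{i_k}^\vee = \beta_{\mathbf{i},k}^\vee + \sum_{j > k} a_{i_k i_j} \beta_{\mathbf{i},j}^\vee$, which one establishes by the same telescoping technique applied to the iterated expansion $\beta_{\mathbf{i},j}^\vee = s_{i_1}\cdots s_{i_{j-1}}\alpha_{i_j}^\vee$. The main obstacle is the combinatorial bookkeeping in the matrix-inverse identity: the cancellation is not visible from the definitions of $a_{i_k i_j}$ and $d_{i_k i_j}$ directly, but once the auxiliary vectors $\gamma_m$ are introduced and the sign-flip identity is noted, both identities fall out by a routine induction on $j-k$.
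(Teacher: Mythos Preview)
The paper does not prove this theorem at all: both Theorem~\ref{StringLusztig} and Theorem~\ref{LusztigString} are simply quoted from Morier-Genoud \cite{MG08}. So there is no ``paper's proof'' to compare against; your proposal is an attempt to supply a proof where the paper only gives a citation.

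Your strategy of verifying $\varphi\circ\psi=\mathrm{id}$ by checking the linear part $AD=I_N$ and the constant part $A\mathbf{l}=\boldsymbol{\lambda}$ separately is sound, and the telescoping argument for $AD=I_N$ is correct: introducing $\gamma_m=s_{i_m}\cdots s_{i_{j-1}}\alpha_{i_j}$, expanding $\gamma_k=\alpha_{i_j}+\sum_{m=k}^{j-1}d_{i_m i_j}\alpha_{i_m}$ via the recursion, and pairing with $\alpha_{i_k}^\vee$ does give the required vanishing.

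The problem is the second identity. Your claimed coroot formula
\[
\alpha_{i_k}^\vee \;=\; \beta_{\mathbf{i},k}^\vee + \sum_{j>k} a_{i_k i_j}\,\beta_{\mathbf{i},j}^\vee
\]
is false already for $k=N$: the right-hand side is then $\beta_{\mathbf{i},N}^\vee = s_{i_1}\cdots s_{i_{N-1}}\alpha_{i_N}^\vee = -\omega_0\alpha_{i_N}^\vee$, which equals $\alpha_{i_N}^\vee$ only when the diagram involution $-\omega_0$ fixes $\alpha_{i_N}$. For instance, in type $A_2$ with $\mathbf{i}=(1,2,1)$ one gets $\beta_{\mathbf{i},3}^\vee=\alpha_2^\vee\neq\alpha_1^\vee=\alpha_{i_3}^\vee$. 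Correspondingly, a direct computation in that example shows $\varphi(\psi(u))_3=\lambda_1-\lambda_2+u_3$, so the two affine maps as literally written are \emph{not} inverse on $\R^N$ for general $\lambda$. The ``same telescoping technique'' does not rescue this, because the recursion for $\beta_{\mathbf{i},j}^\vee$ runs from the left end of the word, not from position $k$, so the cancellation pattern is different. To make the argument go through you would need to unpack exactly how Morier-Genoud relates the two parametrizations (there is an implicit Kashiwara involution / word reversal in the passage between the string and Lusztig data) rather than assume the maps are literal inverses as affine maps on $\R^N$.
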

From \cite{Li98} we know how to compute the weight inequalities for the string polytope: 
\begin{prop}\label{Proppoly}
The string polytope $\mathcal{S}_{\mathbf{i}}(\lambda)\subset \mathcal{S}_{\mathbf{i}}$ is the polytope defined by 
\begin{equation}t_N\leq\langle\lambda,\alpha_{i_N}^{\vee}\rangle,t_{N-1}\leq\langle\lambda-t_N\alpha_{i_N},\alpha_{i_{N-1}}^{\vee}\rangle,\dots, t_{1}\leq\langle\lambda-t_N\alpha_{i_N}-\dots-t_2\alpha_{i_2},\alpha_{i_{1}}^{\vee}\rangle.\label{ineqpoly}
\end{equation}
\end{prop}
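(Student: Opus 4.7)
The plan is to establish both inclusions in $\mathcal{S}_{\mathbf{i}}(\lambda) = \mathcal{S}_{\mathbf{i}} \cap \{t \mid t \text{ satisfies } (\ref{ineqpoly})\}$, proving the forward inclusion by a direct $\mathfrak{sl}_2$-theoretic argument and reducing the reverse inclusion to the crystal-theoretic identification of $B(\lambda)$ inside $B(\infty)$.

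For the forward inclusion, let $v \in V(\lambda)$ be a basis element with string $c_{\mathbf{i}}(v) = (t_1, \dots, t_N)$. I would introduce the raised vectors $w_k := E_{i_{k-1}}^{t_{k-1}} \cdots E_{i_1}^{t_1}(v)$ for $1 \leq k \leq N+1$, so that $w_1 = v$ and $w_{k+1} = E_{i_k}^{t_k}(w_k)$. By Definition~\ref{defString}, each $w_{k+1}$ is nonzero and satisfies $E_{i_k}(w_{k+1}) = 0$, hence is a highest weight vector for the $\mathfrak{sl}_2$-triple attached to $\alpha_{i_k}$. Because $\mathbf{i}$ is a reduced word for $\omega_0$, the terminal vector $w_{N+1}$ is annihilated by every $E_j$, so it is a scalar multiple of $v_\lambda$ and has weight $\lambda$. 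A descending induction then gives $\mathrm{wt}(w_k) = \lambda - \sum_{j \geq k} t_j \alpha_{i_j}$. In the $i_k$-$\mathfrak{sl}_2$-representation generated by $w_{k+1}$, the highest weight equals $m_k := \langle \lambda - \sum_{j > k} t_j \alpha_{i_j}, \alpha_{i_k}^\vee \rangle$, and since $w_k$ is a nonzero scalar multiple of $F_{i_k}^{t_k}(w_{k+1})$, elementary $\mathfrak{sl}_2$-theory forces $t_k \leq m_k$, which is precisely the $k$-th inequality of (\ref{ineqpoly}).

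For the reverse inclusion, given a lattice point $t \in \mathcal{S}_{\mathbf{i}}^L$ satisfying (\ref{ineqpoly}), let $b_t^* \in \mathfrak{B}^{\mathrm{dual}}$ be the unique dual canonical basis element with $c_{\mathbf{i}}(b_t^*) = t$; it remains to show that $b_t^*$ lies in the image of $V(\lambda)^* \hookrightarrow \mathcal{A}$. The idea is to reconstruct $b_t^*$ (up to scalar) by applying the sequence of divided powers $F_{i_1}^{(t_1)} \cdots F_{i_N}^{(t_N)}$ to $v_\lambda$; the inequalities (\ref{ineqpoly}), processed in the order from $k = N$ down to $k = 1$, ensure by the same $\mathfrak{sl}_2$-argument used above that each successive lowering remains nonzero on a vector whose weight is precisely $\lambda - \sum_{j > k} t_j \alpha_{i_j}$.

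The main obstacle is the reverse direction. While the inequalities easily guarantee that $F_{i_1}^{(t_1)} \cdots F_{i_N}^{(t_N)}(v_\lambda)$ is a nonzero element of $V(\lambda)$, certifying that it is precisely the dual canonical basis element indexed by $t$ (rather than merely some vector of the correct weight) uses the compatibility of Lusztig's canonical basis with the quotient $\mathcal{U}^- \twoheadrightarrow V(\lambda)$ and the associated identification of $B(\lambda)$ as a Kashiwara-stable subset of $B(\infty)$. This structural input from Kashiwara's and Lusztig's theory is the nontrivial ingredient, whereas the forward direction is an elementary computation once the terminal raised vector is identified with $v_\lambda$.
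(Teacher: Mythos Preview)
The paper does not prove this proposition; it simply quotes it as a known result from Littelmann \cite{Li98}. So there is no proof in the paper to compare against, and your sketch is providing more detail than the paper does. That said, your outline has two soft spots worth flagging.

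First, in the forward direction you assert that ``because $\mathbf{i}$ is a reduced word for $\omega_0$, the terminal vector $w_{N+1}$ is annihilated by every $E_j$.'' This is true, but it is not a consequence of the reduced-word condition alone: from the definitions you only know $E_{i_N}(w_{N+1})=0$. That $w_{N+1}$ is actually the highest weight vector is itself one of the basic theorems of the string/crystal formalism (proved by Littelmann via the path model, or by Kashiwara via crystal bases), and it is circular to treat it as an elementary input here. Second, the claim that ``$w_k$ is a nonzero scalar multiple of $F_{i_k}^{t_k}(w_{k+1})$'' is not correct in general: $w_k$ may have components in $\mathfrak{sl}_2$-irreducibles of highest weight strictly less than $m_k$, which are annihilated by $E_{i_k}^{t_k}$ and hence invisible in $w_{k+1}$. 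The desired bound $t_k\le m_k$ still follows, but by a slightly different argument: since $E_{i_k}(w_{k+1})=0$ and $w_{k+1}$ has $H_{i_k}$-weight $m_k$, the only $\mathfrak{sl}_2$-isotypic component of $w_k$ that can survive $E_{i_k}^{t_k}$ is the $V(m_k)$-part, and nonvanishing of that part at $H$-weight $m_k-2t_k$ forces $t_k\le m_k$.

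Your diagnosis of the reverse inclusion is accurate: the passage from ``the inequalities hold'' to ``the corresponding dual canonical basis element lies in $V(\lambda)^*$'' is exactly the Kashiwara--Lusztig compatibility of $\mathfrak{B}$ with the quotients $\mathcal{U}^-\twoheadrightarrow V(\lambda)$, and that is the substantive content Littelmann's paper supplies.
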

\subsection{Branching polytopes}\label{subsecbp}
\comment{For the Lusztig parametrization, using the ordering $\beta_{\mathbf{i},1}<\dots<\beta_{\mathbf{i},N}$ means, we use the left opposite lexicographic order on $\Z^N_{\geq 0}$. 
For the string parametrization we use the same order due to Definition \ref{defString}. }
Due to Theorem \ref{coneproduct}, if we find $I_0\subset I_1=\{1,\dots,n\}$ such that $\omega_0(I_1)$ is the longest element of the Weyl group of $\mathfrak{g}'$ and $\mathbf{i}^{\mathfrak{g}}=\mathbf{i}^{\mathfrak{g}'}\mathbf{i}^{\tilde{\mathfrak{g}}}\in R(\omega_0)$ such that $\mathbf{i}^{\mathfrak{g}'}\in R(\omega_0(I_0))$, we can write the string cone $\mathcal{S}_{\mathbf{i}^{\mathfrak{g}}}$ as the direct product of two cones $\mathcal{S}_{\mathbf{i}^{\mathfrak{g}'}}\times \mathcal{S}_{\mathbf{i}^{\tilde{\mathfrak{g}}}}$, where the first one is the string cone for $\mathfrak{g}'$.
Note that the notation for the second cone should not suggest the existence of a Lie algebra $\tilde{\mathfrak{g}}$ but just refers to the remaining part of the string cone for $\mathfrak{g}.$
From Proposition \ref{Proppoly} we see that the image of the restriction of the projection $\pi_{\tilde{\mathfrak{g}}}^{\mathfrak{g}}:\mathcal{S}_{\mathbf{i}^{\mathfrak{g}}}\rightarrow \mathcal{S}_{\mathbf{i}^{\tilde{\mathfrak{g}}}}$ to a polytope $\mathcal{S}_{\mathbf{i}^{\mathfrak{g}}}(\lambda)$ is a polytope $\mathcal{S}_{\mathbf{i}^{\tilde{\mathfrak{g}}}}(\lambda)$ as none of the inequalities for the $\tilde{\mathfrak{g}}$ variables depend on the $\mathfrak{g}'$ variables.\par
We call the polytope $\mathcal{S}_{\mathbf{i}^{\tilde{\mathfrak{g}}}}(\lambda)$ the \textit{string branching polytope}. 
Now, for any point $t$ in the string branching polytope we can define a polytope in the cone $\mathcal{S}_{\mathbf{i}^{\mathfrak{g}'}}$ as $\pi^{\mathfrak{g}}_{\mathfrak{g}'}((\pi^{\mathfrak{g}}_{\tilde{\mathfrak{g}}}|_{\mathcal{S}_{\mathbf{i}^{\mathfrak{g}}(\lambda)}})^{-1}(t))$ where $\pi_{\mathfrak{g}'}^{\mathfrak{g}}$ is the projection $\mathcal{S}_{\mathbf{i}^{\mathfrak{g}}}\rightarrow \mathcal{S}_{\mathbf{i}^{\mathfrak{g}'}}$.  
Due to weight considerations, this polytope is just the polytope 
$\mathcal{S}_{\mathbf{i}^{\mathfrak{g}'}}(\lambda-t\cdot \alpha^T_{\tilde{\mathfrak{g}}})$, where $\alpha_{\tilde{\mathfrak{g}}}=(\alpha_{i_{\frac{N}{2}+1}},\dots,\alpha_{i_N})$.\par
So we get a decomposition of the lattice points of the polytope $\mathcal{S}_{\mathbf{i}^{\mathfrak{g}}}(\lambda)$ into the disjoint union of direct products of the lattice points in $\mathfrak{g}'$-polytopes and integral points in the branching polytope:
$$\displaystyle{\mathcal{S}^L_{\mathbf{i}^{\mathfrak{g}}}(\lambda)=\Dot{\bigcup}_{t\in \mathcal{S}^L_{\mathbf{i}^{\tilde{\mathfrak{g}}}}(\lambda)}\mathcal{S}^L_{\mathbf{i}^{\mathfrak{g}'}}(\lambda-t\cdot\alpha^T_{\tilde{\mathfrak{g}}})\times \{t\}.}$$
This means that the highest weight module $V_{\mathfrak{g}}(\lambda)$ can be written as
$$\displaystyle{V_{\mathfrak{g}}(\lambda)\cong\bigoplus_{t\in \mathcal{S}^L_{\mathbf{i}^{\tilde{\mathfrak{g}}}}(\lambda)}V_{\mathfrak{g}'}(\lambda-t\cdot\alpha^T_{\tilde{\mathfrak{g}}})}.$$

The transformation maps from Theorems \ref{StringLusztig} and \ref{LusztigString} show that a similar construction is possible in the Lusztig parametrization. In the following chapters we will give a decomposition of $\mathbf{i}$ as above for $\mathfrak{g}=\mathfrak{so}_{2n}$, $\mathfrak{g}=\mathfrak{so}_{2n+1}$ or $\mathfrak{g}=\mathfrak{sp}_{2n}$ and $\mathfrak{g}'=\mathfrak{sl}_{n}$ and explicitly calculate the inequalities defining the string cones. Together with Proposition \ref{Proppoly} this allows us to describe the string branching polytopes. Afterwards, we will translate our results to Lusztig's parametrization.

\section{String cones in type \texorpdfstring{$D_n$}{Dn} }\label{section3}

We now use the Berenstein-Zelevinsky cone and Littelmann's description of the string cone to give explicit inequalities to describe the string cone for a special choice of $\mathbf{i}\in R(\omega_0)$ in the case $\mathfrak{g}=\mathfrak{so}_{2n}$.
We start by fixing our notation: Let $\mathfrak{g}=\mathfrak{so}_{2n}$ be the Lie algebra of type $D_n$. 
We realize it as $\mathfrak{so}_{2n}=\{a\in\mathfrak{gl}_{2n}(\C)\mid a+Ba^TB^{-1}=0\},$ where $B$ is the symplectic non-degenerate bilinear form on $\C^{2n}$ with the matrix
$$B=\begin{pmatrix}
0&I_n\\
I_n&0
\end{pmatrix}.
$$
The Cartan subalgebra is $\mathfrak{h}=\{\operatorname{diag}(x_1,\dots,x_n,-x_1,\dots,-x_n)\}$.
A basis of $\mathfrak{h}^*$ is given by $\{\epsilon_1,\dots,\epsilon_n\}$, where 
$$\epsilon_i:\mathrm{diag}(x_1,\dots x_n,-x_1,\dots,-x_n)\mapsto x_i.$$
We enumerate the simple roots by $$\alpha_1=\epsilon_{1}-\epsilon_2,\dots,\alpha_{n-1}=\epsilon_{n-1}-\epsilon_n,\alpha_n=\epsilon_{n-1}+\epsilon_n.$$ 
Note that we follow the standard enumeration like in \cite{Bo68} here, which differs from the enumeration in \cite{Li98}.
Let $W$ be the Weyl group corresponding to $\mathfrak{g}$ with generators $s_1,\dots ,s_n$ acting on $\mathfrak{h}^*$, where $s_i=(i\;i+1)$ for $i<n$ and $$s_n:(\zeta_1,\dots,\zeta_{n-2},\zeta_{n-1},\zeta_n)\mapsto(\zeta_1,\dots,\zeta_{n-2},-\zeta_{n},-\zeta_{n-1}).$$

We fix our favorite reduced decomposition $\underline{\omega}_0^{D_n}=\underline{\omega}_0^{A_{n-1}}\underline{\omega}_0^{\tilde{D_n}}$ with 
$$\begin{array}{lll}
&\underline{\omega}_0^{A_{n-1}}=(s_{n-1})(s_{n-2}s_{n-1})(s_{n-3}s_{n-2}s_{n-1})\dots (s_{1}s_{2}\dots s_{n-1}),\\
&\underline{\omega}_0^{\tilde{D_n}}=(s_n)(s_{n-2}s_{n-1})(s_{n-3}s_{n-2}s_n)\dots (s_1 s_2\dots s_{n-2}s_{n/n-1}),
\end{array}$$ 
where the last element is $s_n$ for $n$ even and $s_{n-1}$ for $n$ odd. 
We will encounter several cases where we have to make a distinction between $n$ even and $n$ odd. We will stick to the convention that using $/$, the first option is our choice for $n$ even and the second for $n$ odd. Using $\pm$ or $\mp$, the upper sign is our choice for $n$ even and the lower sign for $n$ odd.
The corresponding word is  $\mathbf{i}^{D_n}=\mathbf{i}^{A_{n-1}}\mathbf{i}^{\tilde{D_n}}$ with 
$$
\begin{array}{lll}
&\mathbf{i}^{A_{n-1}}=(n-1,n-2,n-1,n-3,n-2,n-1,\dots,1,2,\dots,n-1),\\
&\mathbf{i}^{\tilde{D_n}}=(n,n-2,n-1,n-3,n-2,n,\dots,1 ,2,\dots,n-2,n/n-1).
\end{array}$$
We see that we can divide both $\mathbf{i}^{A_{n-1}}$ and $\mathbf{i}^{\tilde{D_n}}$ in $n-1$ blocks with increasing entries.\par
Regarding this structure, we now introduce a double indication for $t=(t_1,\dots,t_N)$ ($N=n\cdot(n-1)$) by $t=(t_{1,1}^-,t_{1,2}^-,t_{2,2}^-,\dots,t_{1,n-1}^-,\dots,t_{n-1,n-1}^-,t_{1,1}^+,t_{1,2}^+,t_{2,2}^+,\dots,t_{1,n-1}^+,\dots,t_{n-1,n-1}^+).$
As the notation already yields, $\underline{\omega}_0^{A_{n-1}}$ is a reduced decomposition for the longest element of the Weyl group of type $A_{n-1}$. Therefore we directly obtain all the inequalities just containing the variables with upper index minus as we can project down every $t\in \R^N$ onto its first $N/2$ coordinates.
So, the image of the projection of an element of $\mathcal{S}_{\mathbf{i}^{D_{n}}}$ is in the string cone $\mathcal{S}_{\mathbf{i}^{A_{n-1}}}$. The inequalities there are well known, see for example \cite{BZ96}, \cite{Li98}.
\begin{rem}
Our reduced decomposition is not \textit{nice} in the sense of Littelmann \cite{Li98} as it does not respect the enumeration on the set of simple roots. The nice decomposition for our enumeration would be $(1,2,1,3,2,1,\dots,n-1,\dots,1)\mathbf{i}^{\tilde{D}_n}$ (it is unique up to transpositions of orthogonal reflections). Our results for the branching cone and polytope are valid for any reduced decomposition for the longest word of the Weyl group of $A_{n-1}$. Moreover, the condition that the reduced decomposition for $\omega_0^{D_n}$ should start with one for $\omega_0^{A_{n-1}}$ determines the second part uniquely up to the exchange of orthogonal reflections. Therefore, the branching cones and polytopes which we will study in this and also the upcoming sections are (up to a relabeling of the variables) uniquely determined by the branching itself. 
\end{rem}

From Theorem \ref{BZcone}, we deduce the following Lemma:
\begin{lem}\label{LemIneqD}
For $\mathfrak{g}=\mathfrak{so}_{2n}$, all the points in the string cone $\mathcal{S}_{\mathbf{i}^{D_n}}$ satisfy the inequalities 
\begin{equation*}
    \begin{aligned}
    t_{i,j}^-&\geq t_{i+1,j}^-,\quad t_{i,j}^+\geq t_{i+1,j}^+ \;&\forall\; 1\leq i<j \leq n-1,\\
    t_{i,n-2}^+&\geq t_{i,n-1}^+ \;&\forall\; 1\leq i\leq n-2.    
    \end{aligned}
\end{equation*}

In other words, writing $x\leftarrow y$ for all cover relations $x\geq y$, the string cone $\mathcal{S}_{\mathbf{i}^{D_n}}$ is contained in the order polyhedron of the poset in figure \ref{figD1}.\\

\begin{figure}
\scalebox{0.7}{
\begin{tikzpicture}
  \tikzstyle{arrow} = [->,>=stealth]
\draw (0,0)node(11-){$t_{1,1}^-$};
\draw (0.8,0)node(12-){$t_{1,2}^-$};\draw (0.8,-1.0)node(22-){$t_{2,2}^-$};
\draw (1.6,0)node(d1-){$\dots$};\draw (1.6,-1.0)node(d2-){$\dots$};
\draw (2.4,0)node(1n-2-){$t_{1,n-2}^-$};;\draw (2.4,-1.0)node(2n-2-){$t_{2,n-2}^-$};\draw (2.4,-2.0)node(d3-){$\dots$};\draw (2.4,-3.0)node(n-2n-2-){$t_{n-2,n-2}^-$};
\draw (3.9,0)node(1n-1-){$t_{1,n-1}^-$};\draw (3.9,-1.0)node(2n-1-){$t_{2,n-1}^-$};\draw (3.9,-2.0)node(d4-){$\dots$};;\draw (3.9,-3.0)node(n-2n-1-){$t_{n-2,n-1}^-$};\draw (3.9,-4.0)node(n-1n-1-){$t_{n-1,n-1}^-$};
\draw[arrow](22-)--(12-);\draw[arrow](2n-2-)--(1n-2-);\draw[arrow](2n-1-)--(1n-1-);
\draw[arrow](d3-)--(2n-2-);\draw[arrow](d4-)--(2n-1-);
\draw[arrow](n-2n-2-)--(d3-);\draw[arrow](n-2n-1-)--(d4-);
\draw[arrow](n-1n-1-)--(n-2n-1-);

\draw (11,0)node(11+){$t_{1,1}^+$};
\draw (10.0,-1.0)node(12+){$t_{1,2}^+$};
\draw (9.0,-2.0)node(13+){$t_{1,3}^+$};\draw (11.0,-2.0)node(22+){$t_{2,2}^+$};
\draw (8.0,-3.0)node(14+){$t_{1,4}^+$};\draw (10.0,-3.0)node(23+){$t_{2,3}^+$};
\draw (7.0,-4.0)node(15+){$t_{1,5}^+$};\draw (9.0,-4.0)node(24+){$t_{2,4}^+$};\draw (11.0,-4.0)node(33+){$t_{3,3}^+$};
\draw (6.0,-5.0)node(d1+){};\draw (8.0,-5.0)node(d2+){};\draw (10.0,-5.0)node(d3+){};\draw (9.0,-5.0)node(d4+){};\draw (11.0,-7.0)node(d5+){};\draw (5.0,-7.0)node(d6+){};\draw (9.0,-7.0)node(d7+){};\draw (10.0,-6.5)node(d8+){};\draw (7.0,-7.0)node(d9+){};
\draw (8.5,-6.0)node(d1+){$\dots\dots\dots\dots\dots\dots\dots\dots\dots\dots\dots\dots\dots\dots$};
\draw (6.0,-8.0)node(n-6n-1+){$t_{n-6,n-1}^+$};\draw (8.0,-8.0)node(n-5n-2+){$t_{n-5,n-2}^+$};\draw (10.0,-8.0)node(n-4n-3+){$t_{n-4,n-3}^+$};
\draw (7.0,-9.0)node(n-5n-1+){$t_{n-5,n-1}^+$};\draw (9.0,-9.0)node(n-4n-2+){$t_{n-4,n-2}^+$};\draw (11.0,-9.0)node(n-3n-3+){$t_{n-3,n-3}^+$};
\draw (8.0,-10.0)node(n-4n-1+){$t_{n-4,n-1}^+$};\draw (10.0,-10.0)node(n-3n-2+){$t_{n-3,n-2}^+$};
\draw (9.0,-11.0)node(n-3n-1+){$t_{n-3,n-1}^+$};\draw (11.0,-11.0)node(n-2n-2+){$t_{n-2,n-2}^+$};

\draw (10.0,-12.0)node(n-2n-1+){$t_{n-2,n-1}^+$};
\draw (11.0,-13.0)node(n-1n-1+){$t_{n-1,n-1}^+$};

\draw[arrow](22+)--(12+);
\draw[arrow](23+)--(13+);
\draw[arrow](24+)--(14+);\draw[arrow](33+)--(23+);
\draw[arrow](d2+)--(15+);\draw[arrow](d3+)--(24+);
\draw[arrow](n-6n-1+)--(d6+);\draw[arrow](n-6n-1+)--(d9+);\draw[arrow](n-5n-2+)--(d9+);\draw[arrow](n-4n-3+)--(d7+);

\draw[arrow](n-5n-1+)--(n-6n-1+);\draw[arrow](n-5n-1+)--(n-5n-2+);\draw[arrow](n-4n-2+)--(n-5n-2+);\draw[arrow](n-3n-3+)--(n-4n-3+);
\draw[arrow](n-4n-1+)--(n-5n-1+);\draw[arrow](n-4n-1+)--(n-4n-2+);\draw[arrow](n-3n-2+)--(n-4n-2+);
\draw[arrow](n-3n-1+)--(n-4n-1+);\draw[arrow](n-3n-1+)--(n-3n-2+);\draw[arrow](n-2n-2+)--(n-3n-2+);
\draw[arrow](n-2n-1+)--(n-3n-1+);\draw[arrow](n-2n-1+)--(n-2n-2+);
\draw[arrow](n-1n-1+)--(n-2n-1+);

\end{tikzpicture}
}
\caption{Order polyhedron for Lemma \ref{LemIneqD}}
\label{figD1}
\end{figure}
\end{lem}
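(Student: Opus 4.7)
The plan is to apply Theorem \ref{BZcone} once for each inequality in the lemma. Each such application requires a choice of fundamental weight index $i \in \{1, \ldots, n\}$ and a subword of $\mathbf{i}^{D_n}$ that is a reduced word for the minimal coset representative $z^{(i)}$ of $W_{\hat{i}} s_i \omega_0$. When the chosen subword differs from $\mathbf{i}^{D_n}$ only by the omission of a short consecutive segment of positions, nearly all coefficients $(s_{i_{j_1}} \cdots s_{i_{j_p}} \alpha_{i_k})(w_i^\vee)$ vanish and the BZ inequality collapses to a simple comparison between a handful of $t$-variables — exactly the type of cover relation claimed by the lemma.

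For the family $t_{i,j}^- \geq t_{i+1,j}^-$ with $1 \leq i < j \leq n-1$, I would bypass the direct BZ computation by invoking Theorem \ref{coneproduct} with the flag $\emptyset \subset I_0 = \{1, \ldots, n-1\} \subset I_1 = \{1, \ldots, n\}$. Since $\mathbf{i}^{A_{n-1}} \in R(\omega_0(I_0))$, the string cone factors as a direct product whose first factor is the classical $A_{n-1}$ string cone in the $t^-$ variables; these comparison inequalities are among the well-known basic BZ inequalities for that word and require no further argument.

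For the two $t^+$ families I would apply Theorem \ref{BZcone} directly to $\mathbf{i}^{D_n}$, choosing subwords inside the $\mathbf{i}^{\tilde{D}_n}$ suffix. The block structure of $\mathbf{i}^{\tilde{D}_n}$ mirrors that of $\mathbf{i}^{A_{n-1}}$ with the final letter of each odd-indexed block replaced by $n$, so that the letter $n-1$ appears only in the even-indexed blocks. For the inequalities $t_{i,j}^+ \geq t_{i+1,j}^+$ I would transport the subwords that yield the analogous $A_{n-1}$ comparison inequalities, with the fundamental weight index selected so that the relevant $z^{(i)}$ is insensitive to the $n-1 \to n$ replacement at the end of odd blocks. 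For the more delicate inequalities $t_{i,n-2}^+ \geq t_{i,n-1}^+$, I would take $i = n-1$ and use a subword adapted to the sparse appearance of $n-1$ in $\mathbf{i}^{\tilde{D}_n}$; this sparsity is precisely what forces a mixed comparison of the $(n-2)$- and $(n-1)$-columns instead of a same-column one.

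The main obstacle is the combinatorial bookkeeping rather than any conceptual difficulty: one has to identify the minimal coset representatives $z^{(i)}$ inside $W^{D_n}$ (which involves the sign-change reflection $s_n$ and splits into parity cases), check that each proposed subword is reduced and spells the correct $z^{(i)}$, and then expand the coefficients $(s_{i_{j_1}} \cdots s_{i_{j_p}} \alpha_{i_k})(w_i^\vee)$ to confirm that exactly two survive with values $+1$ and $-1$ at the expected positions. Once this routine but tedious verification is completed for each of the three families in turn, the cover relations depicted in Figure \ref{figD1} are just the graphical assembly of the resulting inequalities, which proves the inclusion of $\mathcal{S}_{\mathbf{i}^{D_n}}$ in the stated order polyhedron.
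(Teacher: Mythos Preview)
Your plan is correct and matches the paper's approach: the $t^-$ inequalities are handled via the product decomposition of Theorem~\ref{coneproduct} (which the paper records just before the lemma), while the two $t^+$ families are obtained from Theorem~\ref{BZcone} by choosing, for each $1\le i<n-1$, a family of subwords of $\mathbf{i}^{D_n}$ for $z^{(i)}$ (one for each $m$, obtained by commutation moves) and, for the mixed $(n-2)/(n-1)$ comparison, the index $i=n-1$. The paper carries out precisely the coefficient computations you describe, confirming that in each case exactly two terms survive with values $+1$ and $-1$.
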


\begin{proof}
We use Theorem \ref{BZcone}:
We know that $$\omega_0=\begin{pmatrix}1&2&\dots&n-1&n\\
-1&-2&\dots&-(n-1)&\mp n
\end{pmatrix}.$$
So for $1\leq i<n-1$ we get $$s_i\omega_0=\begin{pmatrix}1&2&\dots&i-1&i&i+1&i+2&\dots&n-1&n\\
-1&-2&\dots&-(i-1)&-(i+1)&-i&-(i+2)&\dots&-(n-1)&\mp n
\end{pmatrix}.$$
Now, the minimal representative of the coset $W_{\hat{i}}s_i\omega_0$ is:
$$z^{(i)}=\begin{pmatrix}1&2&\dots&i-1&i&i+1&i+2&\dots&n-1&n\\
-i&-(i-1)&\dots&-2&i+1&-1&i+2&\dots&n-1&\pm n
\end{pmatrix}.$$
Here, the choice of the sign depends on the parity of $i$ (in the same manner as described above for $n$).\par
We now introduce the notation $$\overrightarrow{i,j}=i,i+1,\dots,j-1,j\textrm{\;for\; }i<j.$$
A word for $z^{(i)}$ is given by $$\mathbf{i}^{(i)}=\mathbf{i}^{A_{n-1},(i)}\mathbf{i}_{1}^{\tilde{D_n},(i)}\mathbf{i}_{2}^{\tilde{D_n},(i)},$$ where
$$
\begin{array}{lll}
\mathbf{i}^{A_{n-1},(i)}&=(\overrightarrow{i,n-1},\overrightarrow{i-1,n-2},\dots,\overrightarrow{1,n-i}),\\[5pt]
\mathbf{i}_{1}^{\tilde{D_n},(i)}&=(n,n-2,n-1,n-3,n-2,n,\dots \overrightarrow{n-i,n-2},n-1/n),\\[5pt]
\mathbf{i}_{2}^{\tilde{D_n},(i)}&=(\overrightarrow{n-i-1,n-2},\dots,\overrightarrow{2,i+1},\overrightarrow{1,i-1}).    
\end{array}
$$
Again, the choice of $n-1/n$ depends on the parity of $i$.
The corresponding permutations are
$$
\begin{array}{lll}
z^{A_{n-1},(i
)}&=\left(\begin{array}{lllllllll}
1&2&\dots&n-i&n-i+1&n-i+2&\dots&n-1&n\\
i+1&i+2&\dots&n&1&2&\dots&i-1&i
\end{array}\right),\\\\
z^{\tilde{D_n},(i
)}_1&=\left(\begin{array}{lllllllll}
1&2&\dots&n-i-1&n-i&n-i+1&\dots&n-1&n\\
1&2&\dots&n-i-1&-n&-(n-1)&\dots&-(n-i+1)&\pm(n-i)
\end{array}\right),\\\\
z^{\tilde{D_n},(i
)}_2&=\left(\begin{array}{llllllllll}
1&2&\dots&i-1&i&i+1&i+2&\dots&n-1&n\\
n-i&n-i+1&\dots&n-2&1&n-1&2&\dots&n-i-1&n
\end{array}\right),\\\\
z^{\tilde{D_n},(i
)}&=z^{\tilde{D_n},(i
)}_1z^{\tilde{D_n},(i
)}_2\\&=\left(\begin{array}{llllllllll}
1&\dots&i-1&i&i+1&i+2&\dots&n-1&n\\
-n&\dots&-(n-i+1)&1&-(n-i+1)&2&\dots&n-i-1&\pm(n-i)
\end{array}\right).
\end{array}
$$

We can write $\mathbf{i}^{A_{n-1},(i)}$ as a subword of $\mathbf{i}^{A_{n-1}}$ in the following way (the underlined parts form the subword):
$$(n-1,n-2,n-1,\overrightarrow{n-3,n-1},\dots,\overrightarrow{\underline{i,n-1}},\overrightarrow{\underline{i-1,n-2}},n-1,\dots,\overrightarrow{\underline{1,n-i}},\overrightarrow{n-i+1,n-1}).$$
This means, starting from the $(n-i)$-th block, we take the first $n-i$ elements of each block.\par
For $\mathbf{i}_2^{\tilde{D}_n,(i)}$ we make the observation, that due to commutation relations we can move the last element from the second last block to the last block. Afterwards we can move the last element from the third block from the right to the second last block and so on. So we get $n-1-i$ different words for $z_2^{\tilde{D_n},(i)}$ and thus also for $z^{\tilde{D_n},(i)}.$

We denote by $\mathbf{i}^{\tilde{D_n},(i),m}$ the word where we stopped this shifting of elements in the $m$-th block and write it as a subword of $\mathbf{i}^{\tilde{D_n}}$ in the following way:
We take the first $i$ elements of the first $m-1$ blocks (or the whole block, if its length is less than $i$), the first $i-1$ elements from the remaining blocks and also the $(i+1)$-th element of all blocks right from the $m$-th block.\par
We now start calculating our inequalities. We just need to consider variables corresponding to entries of $\mathbf{i}$, which are not in the subword.\par
In fact, only two coefficients are not zero:
For $l=m$ and $k=i$ we get 
$$
    \begin{array}{lll}
    &(s_{z^{A_{n-1},(i)}}s_{z^{\tilde{D_n},(i)}_1}\overrightarrow{s_{n-i-1},s_{n-2}}\dots\overrightarrow{s_{n-m+1},s_{n-m+i}}\overrightarrow{s_{n-m},s_{n-m+i-2}}\alpha_{n-m+i-1})(w_i^{\vee})\\[5pt]
    =&(s_{z^{A_{n-1},(i)}}s_{z^{\tilde{D_n},(i)}_1}\overrightarrow{s_{n-i-1},s_{n-2}}\dots\overrightarrow{s_{n-m+1},s_{n-m+i}}\overrightarrow{s_{n-m},s_{n-m+i-2}}(\epsilon_{n-m+i-1}-\epsilon_{n-m+i}))(w_i^{\vee})\\[5pt]
    =&(s_{z^{A_{n-1},(i)}}s_{z^{\tilde{D_n},(i)}_1}(\epsilon_{n-m}-\epsilon_{n-1}))(w_i^{\vee})
    =(s_{z^{A_{n-1},(i)}}(\epsilon_{n-m}+\epsilon_{n-i+1}))(w_i^{\vee})\\[5pt]
    =&(\epsilon_{n-m+i}+\epsilon_{1})(w_i^{\vee})=0+1=1,
    \end{array}
$$
as $n-m+i>i$.
As $i<m$, the case $k=l$ cannot occur here.\par
For $l=m$ and $k=i+1$ we get \par
$$
    \begin{array}{lll}
    &(s_{z^{A_{n-1},(i)}}s_{z^{\tilde{D_n},(i)}_1}\overrightarrow{s_{n-i-1},s_{n-2}}\dots\overrightarrow{s_{n-m+1},s_{n-m+i}}\overrightarrow{s_{n-m},s_{n-m+i-2}}\alpha_{n-m+i})(w_i^{\vee})\\[5pt]
    =&(s_{z^{A_{n-1},(i)}}s_{z^{\tilde{D_n},(i)}_1}\overrightarrow{s_{n-i-1},s_{n-2}}\dots\overrightarrow{s_{n-m+1},s_{n-m+i}}\overrightarrow{s_{n-m},s_{n-m+i-2}}(\epsilon_{n-m+i}-\epsilon_{n-m+i+1}))(w_i^{\vee})\\[5pt]
    =&(s_{z^{A_{n-1},(i)}}s_{z^{\tilde{D_n},(i)}_1}(\epsilon_{n-1}-\epsilon_{n-m+1}))(w_i^{\vee})
    =(s_{z^{A_{n-1},(i)}}(-\epsilon_{n-i+1}-\epsilon_{n-m+1}))(w_i^{\vee})\\[5pt]
    =&(-\epsilon_{1}-\epsilon_{n-m+1+i})(w_i^{\vee}) =-1+0 =-1,
    \end{array}
$$
as $n-m+1+i>i$.
For $k=l=m$ if $(\mathbf{i}^{\tilde{D_n}})_{k,l}=n$ we need to change the sign between the two $\epsilon$ but as the second summand is $0$, this does not change the result.\par
The calculations for all the other coefficients can be found in the appendix.\par
This gives us the inequality $t^+_{i,m}\geq t^ +_{i+1,m}$ for $1\leq i< m\leq n-1$.
As the computations for the inequalities obtained from $s_{n-1}$ are quite similar to those above, we refer again to the appendix for the details. 
As result, again only two coefficients are not zero, which gives us the inequality $t^+_{m,n-2}\geq t^+_{m,n-1}$ for $1\leq m\leq n-2$. 
\end{proof}
We use this lemma together with Theorem \ref{coneproduct} to get more inequalities for the points in the string cone $\mathcal{S}_{\mathbf{i}^{D_n}}$. Here, the decomposition of $\mathbf{i}^{D_n}$ in $\mathbf{i}^{A_{n-1}}\mathbf{i}^{\tilde{D}_n}$ is crucial, because it allows us to apply Theorem \ref{coneproduct}, which means in this case that we can do a projection from $\mathcal{S}_{\mathbf{i}^{D_n}}$ to $\mathcal{S}_{\mathbf{i}^{A_{n-1}}}$.
\begin{thm}\label{thmBZfulfillsineq}
For $\mathfrak{g}=\mathfrak{so}_{2n}$ all the points in the string cone $\mathcal{S}_{\mathbf{i}^{D_n}}$ satisfy the inequalities 
\begin{equation}
\begin{aligned}
t_{i,j}^-&\geq t_{i+1,j}^-,\quad t_{i,j}^+\geq t_{i+1,j}^+ \;&\forall\; 1\leq i<j \leq n-1,\\
t_{i,j}^+&\geq t_{i,j+1}^+ \;&\forall\; 1\leq i\leq j\leq n-2.    \label{ineqCimin}
\end{aligned}
\end{equation}
In other words, the string cone $\mathcal{S}_{\mathbf{i}^{D_n}}$ is contained in the order polyhedron of the poset in figure \ref{figD2}.\\
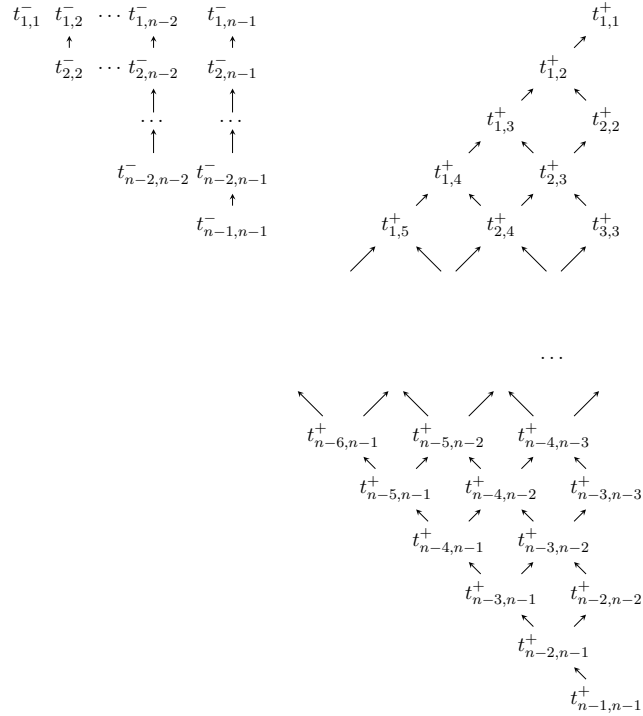
\begin{figure}
\scalebox{0.7}{
\begin{tikzpicture}\label{ineqCiminD}
  \tikzstyle{arrow} = [->,>=stealth]
\draw (0,0)node(11-){$t_{1,1}^-$};
\draw (0.8,0)node(12-){$t_{1,2}^-$};\draw (0.8,-1.0)node(22-){$t_{2,2}^-$};
\draw (1.6,0)node(d1-){$\dots$};\draw (1.6,-1.0)node(d2-){$\dots$};
\draw (2.4,0)node(1n-2-){$t_{1,n-2}^-$};;\draw (2.4,-1.0)node(2n-2-){$t_{2,n-2}^-$};\draw (2.4,-2.0)node(d3-){$\dots$};\draw (2.4,-3.0)node(n-2n-2-){$t_{n-2,n-2}^-$};
\draw (3.9,0)node(1n-1-){$t_{1,n-1}^-$};\draw (3.9,-1.0)node(2n-1-){$t_{2,n-1}^-$};\draw (3.9,-2.0)node(d4-){$\dots$};;\draw (3.9,-3.0)node(n-2n-1-){$t_{n-2,n-1}^-$};\draw (3.9,-4.0)node(n-1n-1-){$t_{n-1,n-1}^-$};
\draw[arrow](22-)--(12-);\draw[arrow](2n-2-)--(1n-2-);\draw[arrow](2n-1-)--(1n-1-);
\draw[arrow](d3-)--(2n-2-);\draw[arrow](d4-)--(2n-1-);
\draw[arrow](n-2n-2-)--(d3-);\draw[arrow](n-2n-1-)--(d4-);
\draw[arrow](n-1n-1-)--(n-2n-1-);

\draw (11,0)node(11+){$t_{1,1}^+$};
\draw (10.0,-1.0)node(12+){$t_{1,2}^+$};
\draw (9.0,-2.0)node(13+){$t_{1,3}^+$};\draw (11.0,-2.0)node(22+){$t_{2,2}^+$};
\draw (8.0,-3.0)node(14+){$t_{1,4}^+$};\draw (10.0,-3.0)node(23+){$t_{2,3}^+$};
\draw (7.0,-4.0)node(15+){$t_{1,5}^+$};\draw (9.0,-4.0)node(24+){$t_{2,4}^+$};\draw (11.0,-4.0)node(33+){$t_{3,3}^+$};
\draw (6.0,-5.0)node(d1+){};\draw (8.0,-5.0)node(d2+){};\draw (10.0,-5.0)node(d3+){};\draw (9.0,-5.0)node(d4+){};\draw (11.0,-7.0)node(d5+){};\draw (5.0,-7.0)node(d6+){};\draw (9.0,-7.0)node(d7+){};\draw (10.0,-6.5)node(d8+){$\dots$};\draw (7.0,-7.0)node(d9+){};\draw (11.0,-7.0)node(d10+){};
\draw (6.0,-8.0)node(n-6n-1+){$t_{n-6,n-1}^+$};\draw (8.0,-8.0)node(n-5n-2+){$t_{n-5,n-2}^+$};\draw (10.0,-8.0)node(n-4n-3+){$t_{n-4,n-3}^+$};
\draw (7.0,-9.0)node(n-5n-1+){$t_{n-5,n-1}^+$};\draw (9.0,-9.0)node(n-4n-2+){$t_{n-4,n-2}^+$};\draw (11.0,-9.0)node(n-3n-3+){$t_{n-3,n-3}^+$};
\draw (8.0,-10.0)node(n-4n-1+){$t_{n-4,n-1}^+$};\draw (10.0,-10.0)node(n-3n-2+){$t_{n-3,n-2}^+$};
\draw (9.0,-11.0)node(n-3n-1+){$t_{n-3,n-1}^+$};\draw (11.0,-11.0)node(n-2n-2+){$t_{n-2,n-2}^+$};

\draw (10.0,-12.0)node(n-2n-1+){$t_{n-2,n-1}^+$};
\draw (11.0,-13.0)node(n-1n-1+){$t_{n-1,n-1}^+$};
\draw[arrow](12+)--(11+);
\draw[arrow](13+)--(12+);\draw[arrow](22+)--(12+);
\draw[arrow](14+)--(13+);\draw[arrow](23+)--(13+);\draw[arrow](23+)--(22+);
\draw[arrow](15+)--(14+);\draw[arrow](24+)--(14+);\draw[arrow](24+)--(23+);\draw[arrow](33+)--(23+);
\draw[arrow](d1+)--(15+);\draw[arrow](d2+)--(15+);\draw[arrow](d2+)--(24+);\draw[arrow](d3+)--(24+);\draw[arrow](d3+)--(33+);
\draw[arrow](n-6n-1+)--(d6+);\draw[arrow](n-6n-1+)--(d9+);\draw[arrow](n-5n-2+)--(d9+);\draw[arrow](n-4n-3+)--(d7+);\draw[arrow](n-5n-2+)--(d7+);\draw[arrow](n-4n-3+)--(d10+);

\draw[arrow](n-5n-1+)--(n-6n-1+);\draw[arrow](n-5n-1+)--(n-5n-2+);\draw[arrow](n-4n-2+)--(n-5n-2+);\draw[arrow](n-4n-2+)--(n-4n-3+);\draw[arrow](n-3n-3+)--(n-4n-3+);\draw[arrow](n-3n-2+)--(n-3n-3+);
\draw[arrow](n-4n-1+)--(n-5n-1+);\draw[arrow](n-4n-1+)--(n-4n-2+);\draw[arrow](n-3n-2+)--(n-4n-2+);
\draw[arrow](n-3n-1+)--(n-4n-1+);\draw[arrow](n-3n-1+)--(n-3n-2+);\draw[arrow](n-2n-2+)--(n-3n-2+);
\draw[arrow](n-2n-1+)--(n-3n-1+);\draw[arrow](n-2n-1+)--(n-2n-2+);
\draw[arrow](n-1n-1+)--(n-2n-1+);

\end{tikzpicture}
}
\caption{Order polyhedron for Theorem \ref{thmBZfulfillsineq}}
\label{figD2}
\end{figure}
\end{thm}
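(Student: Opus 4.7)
After Lemma \ref{LemIneqD}, the only inequalities still to be proven are $t^+_{i,j}\geq t^+_{i,j+1}$ for $1\leq i\leq j\leq n-3$; the case $j=n-2$ was handled there via subwords for $z^{(n-1)}$. The plan is to obtain these remaining inequalities in the same spirit, by producing, for each such pair $(i,j)$, a reduced subword of $\mathbf{i}^{D_n}$ that is a reduced word for $z^{(n)}$, the minimal representative of $W_{\hat{n}}s_n\omega_0$, and then extracting the desired inequality from Theorem \ref{BZcone}. Theorem \ref{coneproduct}, applied to the decomposition $\mathbf{i}^{D_n}=\mathbf{i}^{A_{n-1}}\mathbf{i}^{\tilde{D_n}}$, splits $\mathcal{S}_{\mathbf{i}^{D_n}}$ as a direct product whose second factor involves only the $t^+$-variables; this decouples the problem and confirms that the target inequalities live on the second factor rather than being shadows of the first.

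Using the symmetry $s_{n-1}\leftrightarrow s_n$ in the $D_n$ Dynkin diagram, $z^{(n)}$ is formally the analogue of $z^{(n-1)}$, and a reduced expression for it embeds into $\mathbf{i}^{D_n}$ blockwise, in complete analogy with the construction $\mathbf{i}^{(i)}=\mathbf{i}^{A_{n-1},(i)}\mathbf{i}_1^{\tilde{D_n},(i)}\mathbf{i}_2^{\tilde{D_n},(i)}$ of Lemma \ref{LemIneqD}. The commutation relations among non-adjacent simple reflections inside $\mathbf{i}^{\tilde{D_n}}$ then produce a family of distinct reduced subwords for $z^{(n)}$, parallel to the shifts producing the words $\mathbf{i}^{\tilde{D_n},(i),m}$ in the lemma; one parameter in this family controls the index $j$ and another the index $i$, so for every pair $(i,j)$ with $1\leq i\leq j\leq n-3$ we obtain a designated embedding. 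For each such embedding, Theorem \ref{BZcone} yields an inequality $\sum_k (s_{i_{j_1}}\cdots s_{i_{j_p}}\alpha_{i_k})(w_n^{\vee})\, t_k \geq 0$ summed over positions $k$ outside the subword. The key claim is that all but two of the coefficients vanish, the survivors being $+1$ at the position of $t^+_{i,j}$ and $-1$ at the position of $t^+_{i,j+1}$, delivering $t^+_{i,j}\geq t^+_{i,j+1}$.

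The main obstacle is the bookkeeping needed to verify these cancellations. As in the appendix calculation for Lemma \ref{LemIneqD}, the argument is organized according to whether a given position $k$ lies in $\mathbf{i}^{A_{n-1}}$ or in $\mathbf{i}^{\tilde{D_n}}$, and within the latter whether it precedes or follows the two special positions; the partial products $s_{i_{j_1}}\cdots s_{i_{j_p}}$ are evaluated recursively on the $\epsilon$-basis before being paired against $w_n^\vee$. The one genuinely new ingredient compared with the $z^{(n-1)}$ calculation is the sign behaviour of $s_n$, which swaps $\epsilon_{n-1}$ with $-\epsilon_n$ rather than with $\epsilon_n$; once this sign change is absorbed into the case analysis, the vanishing identities become exactly parallel to those already established.
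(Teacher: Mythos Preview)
Your proposal has a genuine gap: the Berenstein--Zelevinsky inequalities coming from $z^{(n)}$ are not capable of producing the missing relations $t^+_{i,j}\geq t^+_{i,j+1}$, and in fact the full BZ cone is \emph{strictly larger} than the string cone for $\mathbf{i}^{D_n}$ once $n\geq 4$.

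The concrete obstruction is already visible for the single missing inequality in $D_4$, namely $t^+_{1,1}\geq t^+_{1,2}$. One checks that $(z^{(n)})^{-1}(\alpha_n)<0$ (by the Dynkin symmetry this is the same computation as $(z^{(n-1)})^{-1}(\alpha_{n-1})<0$, which follows directly from the explicit form of $z^{(n-1)}$ in the paper). Hence \emph{every} reduced word for $z^{(n)}$ begins with $s_n$. The first occurrence of $n$ in $\mathbf{i}^{D_n}$ is at position $\tfrac{N}{2}+1$, i.e.\ the coordinate $t^+_{1,1}$; a short length count shows that the subword cannot start at any later $n$-position (there are not enough remaining positions to accommodate $\ell(z^{(n)})-1$ letters). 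Consequently every admissible subword for $z^{(n)}$ contains the position of $t^+_{1,1}$, so the coefficient of $t^+_{1,1}$ in every resulting BZ inequality is zero. In $D_4$ the same phenomenon in fact holds for \emph{all} $z^{(i)}$: the unique alternative $n$-position is the very last one, and none of the $z^{(i)}$ admits a reduced word with a single $s_n$ placed at the end. It follows that the vector with $t^+_{1,1}=0$, $t^+_{1,2}=1$ and all other entries zero satisfies every BZ inequality (because the point with $t^+_{1,1}=1$, $t^+_{1,2}=1$ lies in the string cone, forcing the $t^+_{1,2}$-coefficient to be nonnegative in each BZ inequality) yet violates $t^+_{1,1}\geq t^+_{1,2}$. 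Thus no amount of clever subword bookkeeping for any $z^{(i)}$ can recover this inequality from Theorem~\ref{BZcone} alone; your claimed two-parameter family of $z^{(n)}$-embeddings does not exist.

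This is precisely why the paper does \emph{not} proceed by a direct BZ computation but instead argues by induction on $n$: using Theorem~\ref{coneproduct} to replace a string by one with vanishing $t^-$-part, then passing to the Levi subalgebra on $\alpha_2,\dots,\alpha_{n+1}$ (isomorphic to $\mathfrak{so}_{2n}$) via Lemma~\ref{lemres}, and invoking the inductive hypothesis for $D_n$. The Levi restriction is the essential extra ingredient that the BZ framework lacks here.
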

\begin{proof}
We use induction on $n$. For $n=3$, the statement is equivalent to Lemma \ref{LemIneqD}. \par
Now we assume our claim holds true for $D_n$ and consider $D_{n+1}$. Due to Lemma \ref{LemIneqD}, we obtain all the inequalities except
\begin{equation}
t^+_{i,j}\geq t^+_{i,j+1}\;\forall\; 1\leq i\leq j\leq n-2.\label{ineqinduction}    
\end{equation}
Due to Theorem \ref{coneproduct} with $I_0=\{1,\dots,n-1\}$ and $I_1=\{1,\dots,n\}$ we know that the string cone $\mathcal{S}_{\mathbf{i}^{D_n}}$ is the direct product of two cones. 
The first one is the string cone for $A_{n-1}$. The second one we denote by $\mathcal{S}_{\mathbf{i}^{\tilde{D}_n}}$.
So, for a dominant weight $\lambda$ for $D_{n+1}$ and a basis element $v\in V_{D_{n+1}}(\lambda)$ we can write the string of $v$ in direction $\mathbf{i}^{D_{n+1}}$ as $t_v=(t_1,t_2)$ with $t_2\in \mathcal{S}_{\mathbf{i}^{\tilde{D}_{n+1}}}(\lambda)$ and $t_1\in \mathcal{S}_{\mathbf{i}^{A_n}}(\lambda-t_2\cdot\alpha^T_{\tilde{D}_{n+1}})$ (see subsection \ref{subsecbp}).\par
As $\underline{0}\in \mathcal{S}_{\mathbf{i}^{A_n}}(\lambda-t_2\cdot\alpha^T_{\tilde{D}_{n+1}})$, we also have $t':=(\underline{0},t_2)\in \mathcal{S}_{\mathbf{i}^{D_{n+1}}}(\lambda)$.
Let $v'$ be the basis element in $V_{D_{n+1}}(\lambda)$ with string $t'$ in direction $\mathbf{i}^{D_{n+1}}$.
Now, let $\mathfrak{l}$ be the Levi subalgebra of $\mathfrak{g}$ associated to $\alpha_2,\dots,\alpha_{n+1} $. It is isomorphic to $\mathfrak{so}_{2n}$.
We consider the projection $$\pi_D:\R^{n\cdot(n+1)}\rightarrow\R^{(n-1)\cdot n},\;(t^-_{1,1},\dots,t^-_{n,n},t^+_{1,1},\dots,t^+_{n,n})\mapsto (t^-_{1,1},\dots,t^-_{n-1,n-1},t^+_{1,1},\dots,t^+_{n-1,n-1})$$ which just forgets the coordinates $t^-_{i,n}$ and $t^+_{i,n}$ for $1\leq i\leq n$.
If we consider $\mathbf{i}^{D_{n+1}}$ as a tuple living in $\R^{n\cdot(n+1)}$, we can also apply $\pi_D$ on it.
As $\pi_D(\mathbf{i}^{D_{n+1}})$ is a reduced expression for the longest word in the Weyl group of $\mathfrak{l}$ and $(t'^-_{1,1},\dots,t'^-_{n,n})=\underline{0}$, we know due to Lemma \ref{lemres} that, when considering $V_{D_{n+1}}(\lambda)$ as a $\mathfrak{l}-$module, $\pi_D(t')$ is the string of  $v'$ in direction $\pi_D(\mathbf{i}^{D_{n+1}})$.
As $\mathfrak{l}$ is isomorphic to $\mathfrak{so}_{2n}$ by the index shift $i\mapsto i-1$ on the set of simple roots and the same index shift applied on $\pi_D(\mathbf{i}^{D_{n+1}})$ gives $\mathbf{i}^{D_n}$, due to our induction hypothesis $\pi_D(t')$ and therefore also $t'$ and $t$ fulfill the inequalities (\ref{ineqinduction}).

\comment{
If we can show that $\pi_D(\mathcal{S}_{\mathbf{i}^{D_{n+1}}})=\mathcal{S}_{\mathbf{i}^{D_{n}}}$, we are done, as we then get our inequalities ($\ref{ineqinduction}$) by the induction hypothesis, which claims that they hold true in $\mathcal{S}_{\mathbf{i}^{D_{n}}}$.\par
\comment{Therefore, we want to split up our projection  by applying Theorem \ref{coneproduct} on
$$\mathbf{i}^{D_n}=\mathbf{i}^{A_{n-1}}\mathbf{i}^{\tilde{D_n}}.$$ Therefore, we set $I_1=\{1,\dots,n-1\}$ and $I_2=\{1,\dots,n\}$. 
So, $R(\omega_0(I_1))$ is the set of reduced decompositions of the longest element in the Weyl group for $A_{n-1}$ and $R(\omega_0(I_2))$ is the set of reduced decompositions of the longest element in the Weyl group for $D_n$.
We already mentioned that $\mathbf{i}^{(1)}=\mathbf{i}^{A_{n-1}}\in R(\omega_0^{A_{n-1}})$. 
We now need to show that $$\mathbf{i}^{(2)}=\mathbf{i}^{\tilde{D_n}}\in R(\omega_0(I_1)^{-1}\omega_0(I_2))=R((\omega_0^{A_{n-1}})^{-1}\omega_0^{\tilde{D_n}}).$$
We have
\begin{equation}\notag
    \begin{aligned}
    \omega_0^{A_{n-1}}=(\omega_0^{A_{n-1}})^{-1}&=
    \left(\begin{array}{ccccc}
    1&2&\dots&n-1&n\\
    n&n-1&\dots&2&1
    \end{array}\right),\\\\
    \omega_0^{D_{n}}&=
    \left(\begin{array} {ccccc}
    1&2&\dots&n-1&n\\
    -1&-2&\dots&-(n-1)&\mp n
    \end{array}\right).
    \end{aligned}
\end{equation}
Therefore, we get
\begin{equation}\notag
    (\omega_0^{A_{n-1}})^{-1}\omega_0^{D_n}=
    \left(\begin{array}{lllll}
    1&2&\dots&n-1&n\\
    -n&-(n-1)&\dots&-2&\mp 1
    \end{array}\right).
\end{equation}
Now we have $$\mathbf{i}^{\tilde{D_n}}\in R((\omega_0^{A_{n-1}})^{-1}\omega_0^{D_n}).$$
So Theorem \ref{coneproduct} yields that $\mathcal{S}_{\mathbf{i}^{D_n}}$ is the direct product of two cones of dimension $N/2$.
For quite obvious reasons we will denote them by $\mathcal{S}_{\mathbf{i}^{A_{n-1}}}$ and $\mathcal{S}_{\mathbf{i}^{\tilde{D_n}}}$.\par
So our projection $\pi_D$ splits up into 
$$\pi_A:\R^{n\cdot(n+1)/2}\rightarrow\R^{(n-1)\cdot n/2},(t_{1,1}^-,\dots,t_{n,n}^-)\mapsto (t_{1,1}^-,\dots,t_{n-1,n-1}^-)$$
and
$$\pi_D:\R^{n\cdot(n+1)/2}\rightarrow\R^{(n-1)\cdot n/2},(t_{1,1}^+,\dots,t_{n,n}^+)\mapsto (t_{1,1}^+,\dots,t_{n-1,n-1}^+)$$
which both forget the last $n$ variables. We now show $\pi_A(\mathcal{S}_{\mathbf{i}^{A_{n}}})\subseteq \mathcal{S}_{\mathbf{i}^{A_{n-1}}}$
and $\pi_{\tilde{D}}(\mathcal{S}_{\mathbf{i}^{\tilde{D}_{n+1}}})\subseteq \mathcal{S}_{\mathbf{i}^{\tilde{D}_{n}}}:$\par
Let $$\sigma_A=(\sigma_1,\dots,\sigma_{\frac{N}{2}}) \in \mathcal{S}_{\mathbf{i}^{A_n}}.$$
With $s:=\frac{N}{2}-n+1$ we have
$$\pi_A(\sigma_A)=(\sigma_1,\dots,\sigma_s).$$}
Let $\mathfrak{l}$ be the Levi subalgebra of $\mathfrak{g}$ associated to $\alpha_2,\dots,\alpha_n$. As $\pi_D(\mathbf{i}^{D_{n+1}})$ is a word for the longest element of the Weyl group of $\mathfrak{l}$ and $\mathfrak{l}$ is isomorphic to $\mathfrak{so}_{2n}$, we get due to the Demazure type formula and the restriction rule for the path model \cite{Li95}: $t\in\mathcal{S}_{\mathbf{i}^{D_{n+1}}}$ if and only if $\pi_D(t)\in\mathcal{S}_{\mathbf{i}^{D_{n}}}$ and $(0,\dots,0,t^-_{1,n-1},\dots,t^-_{n-1,n-1},0,\dots,0,t^+_{1,n-1},\dots,t^+_{n-1,n-1})\in\mathcal{S}_{\mathbf{i}_{D_{n+1}}}$.
\comment{
Let $b_A$ be the basis vector in $\mathfrak{B}^{\mathrm{dual}}_{A_n}$ which corresponds to $\sigma_A$.
We can also define a projection 
$$\tilde{\pi}:\R^{n+1}\rightarrow\R^{n},(e_1,\dots,e_{n+1})\mapsto(e_2,\dots,e_{n+1}),$$
which forgets the first basis vector 
and maps $e_i$ to $e_{i-1}$ for $i=2,\dots,n+1$.\par

We now want to show that $\pi_A(\sigma_A)$ is the string in direction $\mathbf{i}^{A_{n-1}}$ of $\tilde{\pi}_A(b_A)$.\\\par
For $k=1,\dots,s$, we have $\left(\mathbf{i}^{A_n}\right)_k>1$, so $E_{\left(\mathbf{i}^{A_n}\right)_k}$ does not act on $e_{1}$, 
which we lost during the projection. 
As we also have 
$$E_{\left(\mathbf{i}^{A_n}\right)_k}(v)=E_{\left(\mathbf{i}^{A_n-1}\right)_k+1}(\tilde{\pi}_A(v))$$
for any $v\in\mathcal{B}^{\mathrm{dual}}_{\tilde{A}_{n}},$ we get indeed $$\pi_A(\sigma_A)=\mathcal{S}_{\mathbf{i}^{A_{n-1}}}(\tilde{\pi}_A(b_A)).$$ 
This proofs $\pi_A(\sigma_A)\in \mathcal{S}_{\mathbf{i}^{A_{n-1}}}$. 
Now, let $$\sigma_{\tilde{D}}=(\sigma_{\frac{N}{2}+1},\dots,\sigma_{N}) \in \mathcal{S}_{\mathbf{i}^{\tilde{D}_n+1}}.$$
With $s':=N-n+1$, we have $\pi_{\tilde{D}}(\sigma_{\tilde{D}})=(\sigma_{\frac{N}{2}+1},\dots,\sigma_{s'}).$
Let $b_{\tilde{D}}$ be the basis vector in $\mathfrak{B}^{\mathrm{dual}}_{{\tilde{D}}_{n+1}}$ which corresponds to $\sigma_{\tilde{D}}$.
We can also define a projection
$$\tilde{\pi}_{\tilde{D}}:\R^{n+1}\rightarrow\R^{n},(e_1,\dots,e_{n+1})\mapsto(e_2,\dots,e_{n+1}),$$
which forgets the first basis vector 
and maps $e_i$ to $e_{i-1}$ for $i=2,\dots,n+1$.\par
We now want to show that $\pi_{\tilde{D}}(\sigma_{\tilde{D}})$ is the string in direction $\mathbf{i}^{{\tilde{D}}_{n}}$ of $\tilde{\pi}_{\tilde{D}}(b_{\tilde{D}})$.\\\par
For $k=\frac{N}{2}+1,\dots,s'$, we have $\left(\mathbf{i}^{D_{n+1}}\right)_k>1$, so $E_{\left(\mathbf{i}^{D_{n+1}}\right)_k}$ does not act on $e_{1}$, the first basis vector
, which we lost during the projection.\\\par
Furthermore, as $\left(\mathbf{i}^{D_{n+1}}\right)_k=\left(\mathbf{i}^{D_{n}}\right)_k+1$, we have  
$$E_{\left(\mathbf{i}^{D_{n+1}}\right)_k}\left(v\right)=E_{\left(\mathbf{i}^{D_{n}}\right)_k}\left(\tilde{\pi}_{\tilde{D}}\left(v\right)\right)$$ 
for any $v\in\mathcal{B}^{\mathrm{dual}}_{\tilde{D}_{n+1}}$.\\\par
Therefore, we get indeed $$\pi_{\tilde{D}}(\sigma_{\tilde{D}})=\mathcal{S}_{\mathbf{i}^{D_{n}}}(\tilde{\pi}_{\tilde{D}}(b)).$$ 
This proofs $\pi_{\tilde{D}}(\sigma_{\tilde{D}})\in \mathcal{S}_{\mathbf{i}^{D_{n}}}$.\\\par }
So, we can apply our induction hypothesis to $\pi_D(t)$ to obtain our desired inequalities. }
\end{proof}
\begin{rem}
The proof above does not depend on the reduced decomposition for $\omega_0^{A_{n-1}}$. We might take any reduced decomposition for which we know the string cone $\mathcal{S}_{\mathbf{i}^{A_{n-1}}}$ or even one for which we do not know this cone if we are just interested in the branching cone. 
\end{rem}
We now want to show that the string cone $\mathcal{S}_{\mathbf{i}^{D_n}}$ is exactly given by the inequalities (\ref{ineqCimin}).

Therefore, we use Theorem \ref{thmBZfulfillsineq} and Littelmann's description of the string cone to give an explicit description of the string cone $\mathcal{S}_{\mathbf{i}^{D_n}}$:
\begin{thm}\label{thmStringCone}
The string cone $\mathcal{S}_{\mathbf{i}^{D_n}}$ is exactly given by the set of all points in $t\in \R^N_{\geq 0}$ satisfying the inequalities (\ref{ineqCimin}).
\end{thm}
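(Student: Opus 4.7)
By Theorem \ref{thmBZfulfillsineq}, the string cone is contained in the polyhedron defined by the inequalities (\ref{ineqCimin}). The plan is therefore to prove the reverse inclusion: every $t \in \R^N_{\geq 0}$ satisfying (\ref{ineqCimin}) lies in $\mathcal{S}_{\mathbf{i}^{D_n}}$. The strategy is to first decouple the $t^-$ and $t^+$ variables using Theorem \ref{coneproduct}, and then verify Littelmann's recursive nonnegativity criterion (Theorem \ref{thmrecl}) for the $t^+$ half by exploiting the block structure of $\mathbf{i}^{\tilde D_n}$.

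More precisely, applying Theorem \ref{coneproduct} with the flag $I_0 = \{1, \ldots, n-1\} \subset I_1 = \{1, \ldots, n\}$ yields a direct product decomposition $\mathcal{S}_{\mathbf{i}^{D_n}} = \mathcal{S}_{\mathbf{i}^{A_{n-1}}} \times \mathcal{S}_{\mathbf{i}^{\tilde{D}_n}}$. The first factor is the classical Gelfand--Tsetlin cone, which is cut out exactly by the inequalities $t_{i,j}^- \geq t_{i+1,j}^-$. Hence it suffices to prove that every $t^+ \in \R^{N/2}_{\geq 0}$ satisfying the $t^+$-inequalities of (\ref{ineqCimin}) lies in $\mathcal{S}_{\mathbf{i}^{\tilde{D}_n}}$; equivalently, that Littelmann's quantity $\Delta^j(k)$ computed along $\mathbf{i}^{\tilde D_n}$ is nonnegative for all $1 \leq k < j \leq N/2$.

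I would carry this out by induction on $n$, with base case $n=3$ (where $D_3 \cong A_3$ and the assertion reduces to the known Gelfand--Tsetlin cone of $\mathfrak{sl}_4$). For the inductive step I would use that $\mathbf{i}^{\tilde D_n}$ consists of $n-1$ increasing blocks of length $1, 2, \ldots, n-1$; this regularity means that for each simple root $\alpha_i$ the positions $s$ with $i_s = i$ occur at a predictable offset in each block, so the nonzero Cartan entries $\alpha_{i_s}(\alpha_{i_j}^\vee) = \pm 1$ appearing in $\theta(k,l,j)$ are concentrated at a few well-understood spots between $k$ and $l$. Unwinding the recursive minima defining $m^j_k$ from $j = N$ downwards, each intermediate step either preserves the entry coming from the block containing $k$ or replaces it by the matching entry in a later block; in either case the resulting $\theta(k,l,j)$ telescopes to a linear combination of cover differences of the poset in Theorem \ref{thmBZfulfillsineq}, and its maximum over $l$ is then nonnegative thanks exactly to (\ref{ineqCimin}).

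The principal obstacle is the combinatorial bookkeeping: one has to identify, for each pair $(k,j)$ with $\alpha_{i_k}=\alpha_{i_j}$, the position $l^*$ attaining the maximum in $\Delta^j(k)$, verify that all nested minima $m^{j'}_k$ along the way are consistent with this choice, and then express the resulting linear form as a nonnegative combination of the cover inequalities of the poset. Organising the case analysis by the block containing $j$ — and separating the ``same-block'' case $k$ and $j$ inside one block from the generic ``later-block'' case — keeps the calculation manageable, and the inductive hypothesis handles all pairs $(k,j)$ avoiding the outermost block, so only a finite number of new cases need to be checked to close the induction.
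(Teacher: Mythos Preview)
Your overall direction --- establishing the reverse inclusion via Littelmann's criterion (Theorem~\ref{thmrecl}) --- agrees with the paper. The execution, however, diverges from the paper's in two ways, and one of them is a real gap.

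First, the paper does \emph{not} decouple the two halves via Theorem~\ref{coneproduct} in this proof; it applies Theorem~\ref{thmrecl} to the full word $\mathbf{i}^{D_n}$. Your decoupling is tempting, but Theorem~\ref{thmrecl} as stated here is only for $\mathbf{i}\in R(\omega_0)$; invoking it for the partial word $\mathbf{i}^{\tilde D_n}$ (a reduced word for the coset representative $(\omega_0^{A_{n-1}})^{-1}\omega_0^{D_n}$, not for $\omega_0$) needs either an extension of the criterion or a reduction back to the full word, and you supply neither.

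Second --- and this is the crucial point --- the paper does \emph{not} induct on $n$. It proves instead, by \emph{downward induction on $j$} from $j=N$ to $j=1$, the stronger statement that every intermediate vector $m^j$ again satisfies the inequalities~(\ref{ineqCimin}). Once that invariant is in hand, $\Delta^j(k)\ge 0$ is a one-line consequence: when $i_k=i_j$ one takes $l$ to be the \emph{next} position with $i_l=i_j$, computes $\theta(k,l,j)$ (only three or four $m^j$-entries enter, thanks to the block structure), and reads off $\theta(k,l,j)\ge m^j_{(k_1+1,k_2)^\pm}\ge 0$ as a cover-difference of the poset. This is precisely the ``bookkeeping'' you flag as the obstacle, but the invariant turns it into a short mechanical check with essentially one generic case and a couple of boundary cases ($k_2=n-1$, $k_1=k_2$).

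Your induction on $n$ is not wrong in spirit, but it is unnecessary and, as written, incomplete: the claim that the hypothesis ``handles all pairs $(k,j)$ avoiding the outermost block'' presupposes that the Littelmann recursion restricted to those positions agrees with the recursion for $\mathbf{i}^{\tilde D_{n-1}}$ under the index shift --- which again needs the criterion for a non-$\omega_0$ word. The paper's direct downward-on-$j$ invariant avoids all of this and is both shorter and self-contained.
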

\begin{proof}
We already know from Theorem \ref{thmBZfulfillsineq} that all the points in the string cone fulfill the inequalities (\ref{ineqCimin}). So it remains to show that each $t\in \R^{N}$ fulfilling the inequalities is in the string cone. As the string cone is rational (\cite{Li98}, Proposition 1.5), we might restrict to the case $t\in\Z^N_{\geq 0}$ and use Theorem \ref{thmrecl}:\par
Let $t\in\Z^N$ such that $t$ fulfills all the inequalities (\ref{ineqCimin}). 
We now claim the following: For $j=1,\dots,N$, $m^j$ also fulfills the inequalities (\ref{ineqCimin}). We use the convention $m^j_k=0$ for $k>j$ here.
We will prove this claim by induction on $j$, starting with $j=N$ and going down from $j$ to $j-1$.\par
As $m^N=t$, our induction hypothesis holds true for $j=N$. So we now can assume that $m^j$ fulfills the inequalities and prove them for $m^{j-1}.$ We will use our double indication for this proof again, writing $k=(k_1,k_2)^-$ for $k\leq\frac{N}{2}$ and $k=(k_1,k_2)^+$ for $k>\frac{N}{2}$. \par
We always assume $k<j$ in the following and use the convention $m^j_{(k_1,k_2)^{\pm} }=0$ for $k_1>k_2$.\par
For $i_k\neq i_j$, which is equivalent to $\alpha_{i_k}\neq\alpha_{i_j}$, we have $\Delta^j(k)=m^j_k$ and so $$m_k^{j-1}=\min\{m_k^j,\Delta^j(k)\}=m_k^j.$$ 
Therefore, we get by our induction hypothesis:
\begin{equation}\notag
\begin{aligned}
m^{j-1}_{(k_1,k_2)^\pm}&=m^j_{(k_1,k_2)^\pm}\overset{I.H.}{\geq}m^j_{(k_1+1,k_2)^\pm}
&\geq\min\{m^j_{(k_1+1,k_2)^\pm},\Delta^j((k_1+1,k_2)^\pm)\}=m^{j-1}_{(k_1+1,k_2)^\pm}.
\end{aligned}    
\end{equation}
For $k>\frac{N}{2}$ and $k_2<n-1$, we also get
\begin{equation}\notag
    \begin{aligned}
m^{j-1}_{(k_1,k_2)^+}&=m^j_{(k_1,k_2)^+}\overset{I. H.}{\geq}m^j_{(k_1,k_2+1)^+}
&\geq\min\{m^j_{(k_1,k_2 +1)^+},\Delta^j((k_1,k_2+1)^+)\}=m^{j-1}_{(k_1,k_2+1)^+}.    
    \end{aligned}
\end{equation}
Now we consider the case $i_k=i_j$. \par
For $k_2<n-1$ and $k<\frac{N}{2}$ or $k_1<k_2$, we get
$$
    \begin{array}{lll}
\Delta^j((k_1,k_2)^\pm)&=\max\{\theta((k_1,k_2)^\pm,l,j)\mid k<l\leq j,\;\alpha_{i_l}=\alpha_{i_j}\}\\[5pt]
&\geq \theta((k_1,k_2)^\pm,(k_1+1,k_2+1)^\pm,j)\\[5pt]
&=m^j_{(k_1+1,k_2+1)^\pm}-\sum_{k<s\leq (k_1+1,k_2+1)^\pm}m^j_s\alpha_{i_s}(\alpha^{\vee}_{i_j})\\[5pt]
&=m^j_{(k_1+1,k_2+1)^\pm}+m^j_{(k_1+1,k_2)^\pm}+m^j_{(k_1,k_2+1)^\pm}-2m^j_{(k_1+1,k_2+1)^\pm}\\[5pt]
&=m^j_{(k_1+1,k_2)^\pm}+\underbrace{m^j_{(k_1,k_2+1)^\pm}-m^j_{(k_1+1,k_2+1)^\pm}}_{\geq 0 \;(I.H.)}\\[5pt]
&\geq m^j_{(k_1+1,k_2)^\pm}.
    \end{array}
$$
For $k>\frac{N}{2}$, we can rewrite our calculation from above to obtain 
$$
    \begin{array}{lll}
\Delta^j((k_1,k_2)^+)&=m^j_{(k_1+1,k_2)^+}+m^j_{(k_1,k_2+1)^+}-m^j_{(k_1+1,k_2+1)^+}\\[5pt]
&=m^j_{(k_1,k_2+1)^+}+\underbrace{m^j_{(k_1+1,k_2)^+}-m^j_{(k_1+1,k_2+1)^+}}_{\geq 0 \;(I.H.)}\\[5pt]
&\geq m^j_{(k_1,k_2+1)^+}.
    \end{array}
$$
The remaining calculations are quite similar and can be found in the appendix.
This finishes our claim.\par
Now, we show $$\Delta^j(k)\geq 0\;\forall\; 1\leq k< j\leq N.$$ We essentially already proved that in the proof of our claim above and have just to recollect the important statements here.\par
For $1\leq k<j\leq N$ we find:\par
If $i_k\neq i_j$ 
$$\Delta^j(k)=m^j_k\geq 0,$$
as $m^j$ fulfills (\ref{ineqCimin}).\par
If $i_k=i_j$
$$\Delta^j(k)\geq m^j_{(k_1+1,k_2)^\pm}\geq 0.$$
This finishes the proof of the Theorem.

\end{proof}


\section{String cones in type \texorpdfstring{$B_n$}{Bn} }\label{section4}
We can prove a very similar result for $B_n$. The proofs in this section will be nearly completely analogous to those in the previous one. The main difference will appear in the proof of Theorem \ref{thmBZfulfillsineqB}, where we will make use of our knowledge of the $D_n$-cone from Theorem \ref{thmBZfulfillsineq}. \par
We start again by fixing our notation: Let $\mathfrak{g}=\mathfrak{so}_{2n+1}$ be the Lie algebra of type $B_n$.
We realize it as $\mathfrak{so}_{2n+1}=\{a\in\mathfrak{gl}_{2n+1}(\C)\mid a+Ba^TB^{-1}=0\},$ where $B$ is the symplectic non-degenerate bilinear form on $\C^{2n+1}$ with the matrix
$$B=\begin{pmatrix}
0&I_n&0\\
I_n&0&0\\
0&0&1
\end{pmatrix}.
$$
The Cartan subalgebra is $\mathfrak{h}=\{\operatorname{diag}(x_1,\dots,x_n,-x_1,\dots,-x_n,0)\}$.
A basis of $\mathfrak{h}^*$ is given by $\{\epsilon_1,\dots,\epsilon_n\}$, where 
$$\epsilon_i:\mathrm{diag}(x_1,\dots x_n,-x_1,\dots,-x_n,0)\mapsto x_i.$$
We enumerate the simple roots by $$\alpha_1=\epsilon_{1}-\epsilon_2,\dots,\alpha_{n-1}=\epsilon_{n-1}-\epsilon_n,\alpha_n=\epsilon_n.$$ 
Let $W$ be the Weyl group corresponding to $\mathfrak{g}$ with generators $s_1,\dots ,s_n$ acting on $\mathfrak{h}^*$, where $s_i=(i\;i+1)$ for $i<n$ and $$s_n:(\zeta_1,\dots,\zeta_{n-2},\zeta_{n-1},\zeta_n)\mapsto(\zeta_1,\dots,\zeta_{n-1},-\zeta_{n}).$$

We fix our favorite reduced decomposition $\underline{\omega}_0^{B_n}=\underline{\omega}_0^{A_{n-1}}\underline{\omega}_0^{\tilde{B_n}}$, where the corresponding reduced word for th second part is
$$\begin{array}{lll}

&\mathbf{i}^{\tilde{B_n}}=(n,n-1,n,n-2,n-1,n,\dots,1 ,2,\dots,n-1,n).
\end{array}$$
We do not need to distinguish between $n$ even and $n$ odd for $B_n$.

We see that we can divide $\mathbf{i}^{\tilde{B_n}}$ in $n$ increasing blocks.\par
The double indication for $t=(t_1,\dots,t_N)$ ($N=n^2$) here is $$t=(t_{1,1}^-,t_{1,2}^-,t_{2,2}^-,\dots,t_{1,n-1}^-,\dots,t_{n-1,n-1}^+,t_{1,1}^+,t_{1,2}^+,t_{2,2}^+,\dots,t_{1,n}^+,\dots,t_{n,n}^+).$$
Note that in this case we have more variables with upper index $+$ than with upper index $-$, which corresponds to the fact that to extend the set of positive roots of $A_{n-1}$ to the set of positive roots of $B_n$ we do not only have to add all the pairs $\epsilon_i+\epsilon_j$ but also all the $\epsilon_i$.\par

From Theorem \ref{BZcone} we deduce the following Lemma. Note the difference to the $D_n$ case concerning the third set of inequalities.
\begin{lem}\label{LemineqB}
For $\mathfrak{g}=\mathfrak{so}_{2n+1}$ all the points in the string cone $\mathcal{S}_{\mathbf{i}^{B_n}}$ satisfy the inequalities 
\begin{equation*}
\begin{aligned}
t_{i,j}^-&\geq t_{i+1,j}^-, \;&\forall\; 1< i\leq j \leq n-1,\\
t_{i,j}^+&\geq t_{i+1,j}^+ \;&\forall\; 1< i\leq j \leq n,\\
t_{i,i}^+&\geq t_{i,i+1}^+ \;&\forall\; 1\leq i\leq n-1.    
\end{aligned}
\end{equation*}
In other words, the cone is a subcone of the order polyhedron of the poset in figure \ref{figB1}.\\
\begin{figure}
\scalebox{0.7}{
\begin{tikzpicture}
  \tikzstyle{arrow} = [->,>=stealth]
\draw (0,0)node(11-){$t_{1,1}^-$};
\draw (0.8,0)node(12-){$t_{1,2}^-$};\draw (0.8,-1.0)node(22-){$t_{2,2}^-$};
\draw (1.6,0)node(d1-){$\dots$};\draw (1.6,-1.0)node(d2-){$\dots$};
\draw (2.4,0)node(1n-2-){$t_{1,n-2}^-$};;\draw (2.4,-1.0)node(2n-2-){$t_{2,n-2}^-$};\draw (2.4,-2.0)node(d3-){$\dots$};\draw (2.4,-3.0)node(n-2n-2-){$t_{n-2,n-2}^-$};
\draw (3.9,0)node(1n-1-){$t_{1,n-1}^-$};\draw (3.9,-1.0)node(2n-1-){$t_{2,n-1}^-$};\draw (3.9,-2.0)node(d4-){$\dots$};;\draw (3.9,-3.0)node(n-2n-1-){$t_{n-2,n-1}^-$};\draw (3.9,-4.0)node(n-1n-1-){$t_{n-1,n-1}^-$};
\draw[arrow](22-)--(12-);\draw[arrow](2n-2-)--(1n-2-);\draw[arrow](2n-1-)--(1n-1-);
\draw[arrow](d3-)--(2n-2-);\draw[arrow](d4-)--(2n-1-);
\draw[arrow](n-2n-2-)--(d3-);\draw[arrow](n-2n-1-)--(d4-);
\draw[arrow](n-1n-1-)--(n-2n-1-);

\draw (11,0)node(11+){$t_{1,1}^+$};
\draw (10.0,-1.0)node(12+){$t_{1,2}^+$};
\draw (9.0,-2.0)node(13+){$t_{1,3}^+$};\draw (11.0,-2.0)node(22+){$t_{2,2}^+$};
\draw (8.0,-3.0)node(14+){$t_{1,4}^+$};\draw (10.0,-3.0)node(23+){$t_{2,3}^+$};
\draw (7.0,-4.0)node(15+){$t_{1,5}^+$};\draw (9.0,-4.0)node(24+){$t_{2,4}^+$};\draw (11.0,-4.0)node(33+){$t_{3,3}^+$};
\draw (6.0,-5.0)node(d1+){};\draw (8.0,-5.0)node(d2+){};\draw (10.0,-5.0)node(d3+){};\draw (9.0,-5.0)node(d4+){};\draw (11.0,-7.0)node(d5+){};\draw (5.0,-7.0)node(d6+){};\draw (9.0,-7.0)node(d7+){};\draw (10.0,-6.5)node(d8+){$\dots$};\draw (7.0,-7.0)node(d9+){};\draw (11.0,-7.0)node(d10+){};
\draw (6.0,-8.0)node(n-5n+){$t_{n-5,n}^+$};\draw (8.0,-8.0)node(n-4n-1+){$t_{n-4,n-1}^+$};\draw (10.0,-8.0)node(n-3n-2+){$t_{n-3,n-2}^+$};
\draw (7.0,-9.0)node(n-4n+){$t_{n-4,n}^+$};\draw (9.0,-9.0)node(n-3n-1+){$t_{n-3,n-1}^+$};\draw (11.0,-9.0)node(n-2n-2+){$t_{n-2,n-2}^+$};
\draw (8.0,-10.0)node(n-3n+){$t_{n-3,n}^+$};\draw (10.0,-10.0)node(n-2n-1+){$t_{n-2,n-1}^+$};
\draw (9.0,-11.0)node(n-2n+){$t_{n-2,n}^+$};\draw (11.0,-11.0)node(n-1n-1+){$t_{n-1,n-1}^+$};

\draw (10.0,-12.0)node(n-1n+){$t_{n-1,n}^+$};
\draw (11.0,-13.0)node(nn+){$t_{n,n}^+$};
\draw[arrow](12+)--(11+);
\draw[arrow](22+)--(12+);
\draw[arrow](23+)--(13+);\draw[arrow](23+)--(22+);
\draw[arrow](24+)--(14+);\draw[arrow](33+)--(23+);
\draw[arrow](d2+)--(15+);\draw[arrow](d3+)--(24+);\draw[arrow](d3+)--(33+);
\draw[arrow](n-5n+)--(d6+);\draw[arrow](n-4n-1+)--(d9+);\draw[arrow](n-3n-2+)--(d7+);\draw[arrow](n-3n-2+)--(d10+);

\draw[arrow](n-4n+)--(n-5n+);\draw[arrow](n-3n-1+)--(n-4n-1+);\draw[arrow](n-2n-2+)--(n-3n-2+);\draw[arrow](n-2n-1+)--(n-2n-2+);
\draw[arrow](n-3n+)--(n-4n+);\draw[arrow](n-2n-1+)--(n-3n-1+);
\draw[arrow](n-2n+)--(n-3n+);\draw[arrow](n-1n-1+)--(n-2n-1+);
\draw[arrow](n-1n+)--(n-2n+);\draw[arrow](n-1n+)--(n-1n-1+);
\draw[arrow](nn+)--(n-1n+);
\end{tikzpicture}
}
\caption{Order polyhedron for Lemma \ref{LemineqB}}
\label{figB1}
\end{figure}

\end{lem}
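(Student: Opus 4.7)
The plan is to imitate the argument of Lemma \ref{LemIneqD} verbatim, replacing the $D_n$ data by their $B_n$ counterparts, and to handle the one genuinely new family of inequalities $t^+_{i,i}\ge t^+_{i,i+1}$ separately by running Theorem \ref{BZcone} with $i=n$. For each $i\in\{1,\dots,n\}$ I would first compute $\omega_0^{B_n}=\operatorname{diag}(-1,\dots,-n)$, then $s_i\omega_0$, and then the minimal representative $z^{(i)}$ of the coset $W_{\hat{i}}s_i\omega_0$. A natural reduced word for $z^{(i)}$ is to be exhibited as a subword of $\mathbf{i}^{B_n}=\mathbf{i}^{A_{n-1}}\mathbf{i}^{\tilde{B}_n}$ by the same block pattern as in the $D_n$-proof: take the first $n-i$ letters of each block in the $A_{n-1}$-part starting from the $(n-i)$-th block, and the analogous shifted subword in the $\tilde{B}_n$-part (with the usual slide of the last letter of a block into the next block under commutation, producing the several admissible subwords).

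For $i<n$, the fundamental coweight $w_i^\vee$ does not pair with $\epsilon_n$, so that the short root $\alpha_n=\epsilon_n$ appearing in $\mathbf{i}^{\tilde{B}_n}$ contributes a zero coefficient in the Berenstein--Zelevinsky sum of Theorem \ref{BZcone}. The surviving coefficients $(s_{i_{j_1}}\cdots s_{i_{j_p}}\alpha_{i_k})(w_i^\vee)$ then reduce, exactly as in the $D_n$-computation, to a single pair of non-zero entries $+1$ at position $(i,m)$ and $-1$ at position $(i+1,m)$ in the appropriate block, yielding $t^-_{i,m}\ge t^-_{i+1,m}$ (from the $A_{n-1}$-part) and $t^+_{i,m}\ge t^+_{i+1,m}$ (from the $\tilde{B}_n$-part). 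Except for the bookkeeping this is line-for-line the $D_n$ argument, and the detailed calculations can be referred to the appendix.

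The new inequalities $t^+_{i,i}\ge t^+_{i,i+1}$ must come from the choice $i=n$, where now $w_n^\vee=\tfrac12(\epsilon_1^\vee+\cdots+\epsilon_n^\vee)$ pairs non-trivially with every $\epsilon_j$. Here $s_n\omega_0$ differs from $\omega_0$ only in the sign of $\epsilon_n$, so $z^{(n)}$ is the very short element that maps everything to $\epsilon_1,\dots,\epsilon_{n-1},\epsilon_n$ (i.e.\ the identity in the $A_{n-1}$-part and a short reflection-type element in the $\tilde{B}_n$-part). Picking the canonical subword of $\mathbf{i}^{B_n}$ representing $z^{(n)}$, I would verify block by block that all coefficients vanish except the two which correspond to the two adjacent positions $(i,i)$ and $(i,i+1)$ of the $t^+$-variables in the $i$-th block of $\mathbf{i}^{\tilde{B}_n}$, producing the desired inequality.

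The routine part is the symbolic manipulation of the Weyl group elements, which will be almost the same as in the $D_n$-appendix. The main obstacle is the $i=n$ case: verifying that the chosen subword for $z^{(n)}$ is genuinely reduced and that the telescoping cancellation of the coefficients under the short root $\alpha_n$ really collapses to exactly two non-zero entries with coefficients $+1$ and $-1$. Once this is checked, combining the three families of inequalities gives the asserted order polyhedron of Figure~\ref{figB1}, and the rest of the section proceeds in parallel with Section~\ref{section3}.
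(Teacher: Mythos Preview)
Your plan for $i<n$ is essentially the paper's approach, but one step of your reasoning is wrong: you claim that since $w_i^\vee$ does not pair with $\epsilon_n$, the short root $\alpha_n=\epsilon_n$ always contributes zero in the Berenstein--Zelevinsky sum. That is false, because the coefficient is $(s_{i_{j_1}}\cdots s_{i_{j_p}}\alpha_n)(w_i^\vee)$, not $\alpha_n(w_i^\vee)$; after the Weyl action the short root can land on $\pm\epsilon_1$ and pair to $\pm 1$. Indeed in the paper's computation for the subword with $m=i+1$, the position $(i+1,i+1)^+$ (an occurrence of $n$) contributes $-1$, not $0$. This does not break the conclusion $t^+_{i,m}\ge t^+_{i+1,m}$, but your justification for it is not valid.

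The real gap is your treatment of the diagonal inequalities $t^+_{i,i}\ge t^+_{i,i+1}$. You propose to obtain them from Theorem~\ref{BZcone} with $i=n$, asserting that $z^{(n)}$ is ``the identity in the $A_{n-1}$-part and a short reflection-type element in the $\tilde B_n$-part''. This is incorrect: for $B_n$ one has $\omega_0=-\mathrm{id}$, and the minimal representative of $W_{\hat n}s_n\omega_0$ is
\[
z^{(n)}=\begin{pmatrix}1&2&\cdots&n-1&n\\ -n&-(n-1)&\cdots&-2&1\end{pmatrix},
\]
of length $\tfrac{(n-1)(n+2)}{2}$, not a short element at all. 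You have not exhibited any subwords of $\mathbf{i}^{B_n}$ for this $z^{(n)}$, let alone shown that they produce the two-term inequalities you want. The paper does \emph{not} use $i=n$. Instead it stays with $i<n$ and observes that in $B_n$ the reflections $s_n$ and $s_{n-2}$ commute, so the last $n$ in the $i$-th block of $\mathbf{i}_1^{\tilde B_n,(i)}$ can be slid into the $(i+1)$-th block. This produces one further reduced subword for the \emph{same} $z^{(i)}$, and computing its two non-subword coefficients (at positions $(i,i)^+$ and $(i,i+1)^+$) gives exactly $+1$ and $-1$, hence $t^+_{i,i}\ge t^+_{i,i+1}$. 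You should replace your $i=n$ argument by this commutation trick.
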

\begin{proof}
We use Theorem \ref{BZcone}:
We know that $$\omega_0=\begin{pmatrix}1&2&\dots&n-1&n\\
-1&-2&\dots&-(n-1)&- n
\end{pmatrix}.$$
So for $1\leq i\leq n-1$, we get $$s_i\omega_0=\begin{pmatrix}1&2&\dots&i-1&i&i+1&i+2&\dots&n-1&n\\
-1&-2&\dots&-(i-1)&-(i+1)&-i&-(i+2)&\dots&-(n-1)&- n
\end{pmatrix}.$$
Now acting by the parabolic subgroup $W_{\hat{i}}$ gives us the minimal representative of $W_{\hat{i}}s_i\omega_0$ :$$z^{(i)}=\begin{pmatrix}1&2&\dots&i-1&i&i+1&i+2&\dots&n-1&n\\
-i&-(i-1)&\dots&-2&i+1&-1&i+2&\dots&n-1& n
\end{pmatrix}.$$
A word for $z^{(i)}$ is given by $$\mathbf{i}^{(i)}=\mathbf{i}^{A_{n-1},(i)}\mathbf{i}_{1}^{\tilde{B_n},(i)}\mathbf{i}_{2}^{\tilde{B_n},(i)}$$ where
$$
\begin{array}{lll}
\mathbf{i}^{A_{n-1},(i)}&=(\overrightarrow{i,n-1},\overrightarrow{i-1,n-2},\dots,\overrightarrow{1,n-i}),\\[5pt]
\mathbf{i}_{1}^{\tilde{B_n},(i)}&=(n,n-1,n,\overrightarrow{n-2,n},\dots \overrightarrow{n-i+1,n}),\\[5pt]
\mathbf{i}_{2}^{\tilde{B_n},(i)}&=(\overrightarrow{n-i,,n-1},\dots,\overrightarrow{2,i+1},\overrightarrow{1,i-1}).    
\end{array}
$$
The corresponding permutations are
$$
\begin{array}{lll}
z^{A_{n-1},(i
)}&=\left(\begin{array}{lllllllll}
1&2&\dots&n-i&n-i+1&n-i+2&\dots&n-1&n\\
i+1&i+2&\dots&n&1&2&\dots&i-1&i
\end{array}\right),\\\\
z^{\tilde{B_n},(i
)}_1&=\left(\begin{array}{lllllllll}
1&2&\dots&n-i&n-i+1&n-i+2&\dots&n-1&n\\
1&2&\dots&n-i&-n&-(n-1)&\dots&-(n-i)&-(n-i+1)
\end{array}\right),\\\\
z^{\tilde{B_n},(i
)}_2&=\left(\begin{array}{llllllllll}
1&2&\dots&i-1&i&i+1&i+2&\dots&n-1&n\\
n-i+1&n-i+2&\dots&n-1&1&n&2&\dots&n-i-1&n-i
\end{array}\right),\\\\
z^{\tilde{B_n},(i
)}&=z^{\tilde{B_n},(i
)}_1z^{\tilde{B_n},(i
)}_2\\&=\left(\begin{array}{llllllllll}
1&\dots&i-1&i&i+1&i+2&\dots&n-1&n\\
-n&\dots&-(n-i+2)&1&-(n-i+1)&2&\dots&n-i-1&n-i
\end{array}\right).
\end{array}
$$

Again, we can write $\mathbf{i}^{A_{n-1},(i)}$ as a subword of $\mathbf{i}^{A_{n-1}}$ in the following way:
$$(n-1,n-2,n-1,\overrightarrow{n-3,n-1},\dots,\overrightarrow{\underline{i,n-1}},\overrightarrow{\underline{i-1,n-2}},n-1,\dots,\overrightarrow{\underline{1,n-i}},\overrightarrow{n-i+1,n-1}).$$

Due to commutation relations we can again find $m$ different words for $z^{\tilde{B_n},(i)}$.\par
We find two coefficients which are not zero:
For $l=m$ and $k=i$ we get 
$$
    \begin{array}{lll}
    &(s_{z^{A_{n-1},(i)}}s_{z^{\tilde{B_n},(i)}_1}\overrightarrow{s_{n-i},s_{n-1}}\dots\overrightarrow{s_{n-m+2},s_{n-m+i+1}}\overrightarrow{s_{n-m+1},s_{n-m+i-1}}\alpha_{n-m+i})(w_i^{\vee})\\[5pt]
    =&(s_{z^{A_{n-1},(i)}}s_{z^{\tilde{B_n},(i)}_1}\overrightarrow{s_{n-i},s_{n-1}}\dots\overrightarrow{s_{n-m+2},s_{n-m+i+1}}\overrightarrow{s_{n-m+1},s_{n-m+i-1}}(\epsilon_{n-m+i}-\epsilon_{n-m+i+1}))(w_i^{\vee})\\[5pt]
    =&(s_{z^{A_{n-1},(i)}}s_{z^{\tilde{B_n},(i)}_1}(\epsilon_{n-m+1}-\epsilon_{n}))(w_i^{\vee})=(s_{z^{A_{n-1},(i)}}(\epsilon_{n-m+1}+\epsilon_{n-i+1}))(w_i^{\vee})\\[5pt]
    =&(\epsilon_{n-m+1+i}+\epsilon_{1})(w_i^{\vee})=0+1 =1,
    \end{array}
$$
as $n-m+1+i>i$.\par
For $l=m>i+1$ and $k=i+1$ we get 
$$
\begin{array}{lll}
    &(s_{z^{A_{n-1},(i)}}s_{z^{\tilde{B_n},(i)}_1}\overrightarrow{s_{n-i},s_{n-1}}\dots\overrightarrow{s_{n-m+2},s_{n-m+i+1}}\overrightarrow{s_{n-m+1},s_{n-m+i-1}}\alpha_{n-m+i+1})(w_i^{\vee})\\[5pt]
    =&(s_{z^{A_{n-1},(i)}}s_{z^{\tilde{B_n},(i)}_1}\overrightarrow{s_{n-i},s_{n-1}}\dots\overrightarrow{s_{n-m+2},s_{n-m+i+1}}\overrightarrow{s_{n-m+1},s_{n-m+i-1}}(\epsilon_{n-m+i+1}-\epsilon_{n-m+i+2}))(w_i^{\vee})\\[5pt]
    =&(s_{z^{A_{n-1},(i)}}s_{z^{\tilde{B_n},(i)}_1}(\epsilon_{n}-\epsilon_{n-m+2}))(w_i^{\vee})=(s_{z^{A_{n-1},(i)}}(-\epsilon_{n-i+1}-\epsilon_{n-m+2}))(w_i^{\vee})\\[5pt]
    =&(-\epsilon_{1}-\epsilon_{n-m+2+i})(w_i^{\vee})=-1+0=-1,
    \end{array}
$$
as $n-m+2+i>i$.\par
For $l=m=i+1=k$ we get 
$$
    \begin{array}{lll}
     &(s_{z^{A_{n-1},(i)}}s_{z^{\tilde{B_n},(i)}_1}\overrightarrow{s_{n-i},s_{n-2}}\alpha_{n})(w_i^{\vee})=(s_{z^{A_{n-1},(i)}}s_{z^{\tilde{B_n},(i)}_1}\overrightarrow{s_{n-i},s_{n-2}}(\epsilon_{n}))(w_i^{\vee})\\[5pt]
    =&(s_{z^{A_{n-1},(i)}}(-\epsilon_{n-i+1}))(w_i^{\vee})=(-\epsilon_{1})(w_i^{\vee})=-1.
    \end{array}
$$

So only two coefficients are not zero, which gives us the inequality $t^+_{i,m}\geq t^+_{i+1,m}$ for $1\leq i< m\leq n$.
For the computation of the zero coefficients and for the  inequality $t^+_{i,i}\geq t^+_{i,i+1}$ for $1\leq i<n$ we refer to the appendix.

\end{proof}
We use this Lemma together with Theorem \ref{coneproduct} to get more inequalities for the points in the string cone $\mathcal{S}_{\mathbf{i}^{B_n}}$ via a projection from $B_n$ to $A_{n-1}$ . Again, the decomposition of $\mathbf{i}^{B_n}$ in $\mathbf{i}^{A_{n-1}}\mathbf{i}^{\tilde{B}_n}$ is crucial. Here we obtain a completely analogous result as in Theorem \ref{thmBZfulfillsineq}.
\begin{thm}\label{thmBZfulfillsineqB}
For $\mathfrak{g}=\mathfrak{so}_{2n+1}$ all the points in the string cone $\mathcal{S}_{\mathbf{i}^{B_n}}$ satisfy the inequalities 
\begin{equation}\notag
\begin{aligned}
t_{i,j}^-&\geq t_{i+1,j}^- \;&\forall\; 1\leq i\leq j \leq n-1,\\
t_{i,j}^+&\geq t_{i+1,j}^+ \;&\forall\; 1\leq i\leq j \leq n    ,\\
t_{i,j}^+&\geq t_{i,j+1}^+ \;&\forall\; 1\leq i\leq  j\leq n-1.    \label{ineqCiminB}
\end{aligned}
\end{equation}

The corresponding poset looks the same as figure \ref{figD2} with the only difference that the indices of the $t^+$ variables go up to $n$ instead of $n-1$.
\comment{
In other words, the cone is a subcone of the order polyhedron of the following poset:\\
\begin{tikzpicture}
  \tikzstyle{arrow} = [->,>=stealth]
\draw (0,0)node(11-){$t_{1,1}^-$};
\draw (0.8,0)node(12-){$t_{1,2}^-$};\draw (0.8,-1.0)node(22-){$t_{2,2}^-$};
\draw (1.6,0)node(d1-){$\dots$};\draw (1.6,-1.0)node(d2-){$\dots$};
\draw (2.4,0)node(1n-2-){$t_{1,n-2}^-$};;\draw (2.4,-1.0)node(2n-2-){$t_{2,n-2}^-$};\draw (2.4,-2.0)node(d3-){$\dots$};\draw (2.4,-3.0)node(n-2n-2-){$t_{n-2,n-2}^-$};
\draw (3.9,0)node(1n-1-){$t_{1,n-1}^-$};\draw (3.9,-1.0)node(2n-1-){$t_{2,n-1}^-$};\draw (3.9,-2.0)node(d4-){$\dots$};;\draw (3.9,-3.0)node(n-2n-1-){$t_{n-2,n-1}^-$};\draw (3.9,-4.0)node(n-1n-1-){$t_{n-1,n-1}^-$};
\draw[arrow](22-)--(12-);\draw[arrow](2n-2-)--(1n-2-);\draw[arrow](2n-1-)--(1n-1-);
\draw[arrow](d3-)--(2n-2-);\draw[arrow](d4-)--(2n-1-);
\draw[arrow](n-2n-2-)--(d3-);\draw[arrow](n-2n-1-)--(d4-);
\draw[arrow](n-1n-1-)--(n-2n-1-);

\draw (11,0)node(11+){$t_{1,1}^+$};
\draw (10.0,-1.0)node(12+){$t_{1,2}^+$};
\draw (9.0,-2.0)node(13+){$t_{1,3}^+$};\draw (11.0,-2.0)node(22+){$t_{2,2}^+$};
\draw (8.0,-3.0)node(14+){$t_{1,4}^+$};\draw (10.0,-3.0)node(23+){$t_{2,3}^+$};
\draw (7.0,-4.0)node(15+){$t_{1,5}^+$};\draw (9.0,-4.0)node(24+){$t_{2,4}^+$};\draw (11.0,-4.0)node(33+){$t_{3,3}^+$};
\draw (6.0,-5.0)node(d1+){};\draw (8.0,-5.0)node(d2+){};\draw (10.0,-5.0)node(d3+){};\draw (9.0,-5.0)node(d4+){};\draw (11.0,-7.0)node(d5+){};\draw (5.0,-7.0)node(d6+){};\draw (9.0,-7.0)node(d7+){};\draw (10.0,-6.5)node(d8+){$\dots$};\draw (7.0,-7.0)node(d9+){};\draw (11.0,-7.0)node(d10+){};
\draw (6.0,-8.0)node(n-5n+){$t_{n-5,n}^+$};\draw (8.0,-8.0)node(n-4n-1+){$t_{n-4,n-1}^+$};\draw (10.0,-8.0)node(n-3n-2+){$t_{n-3,n-2}^+$};
\draw (7.0,-9.0)node(n-4n+){$t_{n-4,n}^+$};\draw (9.0,-9.0)node(n-3n-1+){$t_{n-3,n-1}^+$};\draw (11.0,-9.0)node(n-2n-2+){$t_{n-2,n-2}^+$};
\draw (8.0,-10.0)node(n-3n+){$t_{n-3,n}^+$};\draw (10.0,-10.0)node(n-2n-1+){$t_{n-2,n-1}^+$};
\draw (9.0,-11.0)node(n-2n+){$t_{n-2,n}^+$};\draw (11.0,-11.0)node(n-1n-1+){$t_{n-1,n-1}^+$};

\draw (10.0,-12.0)node(n-1n+){$t_{n-1,n}^+$};
\draw (11.0,-13.0)node(nn+){$t_{n,n}^+$};
\draw[arrow](12+)--(11+);
\draw[arrow](13+)--(12+);\draw[arrow](22+)--(12+);
\draw[arrow](14+)--(13+);\draw[arrow](23+)--(13+);\draw[arrow](23+)--(22+);
\draw[arrow](15+)--(14+);\draw[arrow](24+)--(14+);\draw[arrow](24+)--(23+);\draw[arrow](33+)--(23+);
\draw[arrow](d1+)--(15+);\draw[arrow](d2+)--(15+);\draw[arrow](d2+)--(24+);\draw[arrow](d3+)--(24+);\draw[arrow](d3+)--(33+);
\draw[arrow](n-5n+)--(d6+);\draw[arrow](n-5n+)--(d9+);\draw[arrow](n-4n-1+)--(d9+);\draw[arrow](n-3n-2+)--(d7+);\draw[arrow](n-4n-1+)--(d7+);\draw[arrow](n-3n-2+)--(d10+);

\draw[arrow](n-4n+)--(n-5n+);\draw[arrow](n-4n+)--(n-4n-1+);\draw[arrow](n-3n-1+)--(n-4n-1+);\draw[arrow](n-3n-1+)--(n-3n-2+);\draw[arrow](n-2n-2+)--(n-3n-2+);\draw[arrow](n-2n-1+)--(n-2n-2+);
\draw[arrow](n-3n+)--(n-4n+);\draw[arrow](n-3n+)--(n-3n-1+);\draw[arrow](n-2n-1+)--(n-3n-1+);
\draw[arrow](n-2n+)--(n-3n+);\draw[arrow](n-2n+)--(n-2n-1+);\draw[arrow](n-1n-1+)--(n-2n-1+);
\draw[arrow](n-1n+)--(n-2n+);\draw[arrow](n-1n+)--(n-1n-1+);
\draw[arrow](nn+)--(n-1n+);

\end{tikzpicture}
}
\end{thm}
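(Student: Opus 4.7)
The plan is to mirror the inductive structure of the proof of Theorem~\ref{thmBZfulfillsineq}, inducting on $n$ using $B_{n-1}$ as the smaller case. After checking the base case for small $n$ directly, the inductive step proceeds as follows. Apply Theorem~\ref{coneproduct} with $I_0=\{1,\dots,n-1\}\subset I_1=\{1,\dots,n\}$ to decompose $\mathcal{S}_{\mathbf{i}^{B_n}} = \mathcal{S}_{\mathbf{i}^{A_{n-1}}}\times \mathcal{S}_{\mathbf{i}^{\tilde{B}_n}}$. For any $t=(t_1,t_2)$ in the cone, the zeroed string $t':=(\underline{0},t_2)$ also lies in the cone and corresponds to some basis vector $v'$. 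Take the Levi $\mathfrak{l}\subset\mathfrak{so}_{2n+1}$ generated by $\alpha_2,\dots,\alpha_n$ (of type $B_{n-1}$), let $\pi_B$ be the coordinate projection that discards the two blocks of $\mathbf{i}^{B_n}$ containing $s_1$, and invoke Lemma~\ref{lemres}. After the index shift $i\mapsto i-1$, $\pi_B(\mathbf{i}^{B_n})$ becomes $\mathbf{i}^{B_{n-1}}$, so the induction hypothesis transfers the $B_{n-1}$ inequalities to the corresponding coordinates of $t$.

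Combining Lemma~\ref{LemineqB} with these inherited inequalities yields all the claimed inequalities except the last-column relations $t_{i,n-1}^+\geq t_{i,n}^+$ for $1\leq i\leq n-2$: the induction only provides row-decreasing relations up to $j\leq n-2$, and Lemma~\ref{LemineqB} only contributes the diagonal relations $t_{i,i}^+\geq t_{i,i+1}^+$ rather than the full last-column relations that Lemma~\ref{LemIneqD} supplied in the $D_n$ setting. To fill this gap I would invoke Theorem~\ref{thmBZfulfillsineq}: starting from a string $t\in\mathcal{S}_{\mathbf{i}^{B_n}}$, construct a companion point in $\mathcal{S}_{\mathbf{i}^{D_n}}$ -- for instance by identifying the $t_{i,j}^+$ with $j\leq n-1$ with the corresponding coordinates of a $D_n$-string obtained from a subword relation between $\mathbf{i}^{B_n}$ and $\mathbf{i}^{D_n}$ -- and then pull back the $D_n$ row-decreasing inequalities $t_{i,j}^+\geq t_{i,j+1}^+$ to recover the missing last-column relations in type $B_n$.

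The main obstacle is precisely this last transfer. The root of the difficulty is the asymmetry between the short root $\alpha_n$ in $B_n$ and the symmetric pair of long roots $\alpha_{n-1},\alpha_n$ in $D_n$: this asymmetry forces the Berenstein--Zelevinsky computation of Lemma~\ref{LemineqB} to yield only diagonal, rather than last-column, inequalities, which is exactly why the auxiliary $D_n$-input is needed while no such detour is required in type $D_n$. Getting the correct comparison map between the $B_n$ and $D_n$ string cones and verifying that it preserves the relevant inequalities is where the real work of the proof lies; the rest is a routine adaptation of the $D_n$ argument.
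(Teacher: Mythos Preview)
Your overall instinct—that the $D_n$ result must be fed into the $B_n$ argument—is exactly right, and you correctly locate the obstacle. But the route you take is more circuitous than the paper's, and your description of the $D_n$ transfer is not quite the right one.

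The paper does \emph{not} induct on $n$ through $B_{n-1}$. Instead it goes directly to $D_n$: inside $\mathfrak{so}_{2n+1}$ one has the subalgebra $\mathfrak{l}\cong\mathfrak{so}_{2n}$ with simple roots $\alpha_1,\dots,\alpha_{n-1},\,\alpha_{n-1}+2\alpha_n=\epsilon_{n-1}+\epsilon_n$ (the standard $D_n\hookrightarrow B_n$). The technical point you are missing is how to make the string for $\mathfrak{l}$ visible inside the $B_n$ string. The paper first replaces $\mathbf{i}^{\tilde B_n}$ by a word $\mathbf{i}'^{\tilde B_n}$ that differs only by commutations of orthogonal reflections (so the string cone is unchanged): every second occurrence of $n$ is pushed as far right as possible, so that the $n$'s appear grouped in patterns $n,n{-}1,n$. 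Since $s_ns_{n-1}s_n=s_{\epsilon_{n-1}+\epsilon_n}$ and $E_{\alpha_{n-1}+2\alpha_n}$ acts through $E_{\alpha_n}E_{\alpha_{n-1}}E_{\alpha_n}$ (and its reorderings), deleting all positions with letter $n$ from $\mathbf{i}'^{B_n}$ yields a reduced word for $\omega_0^{D_n}$, and the projection $\pi_B$ that forgets those coordinates sends the $B_n$ string of $v$ to its $D_n$ string. Theorem~\ref{thmStringCone} then supplies \emph{all} of the missing inequalities $t^+_{i,j}\ge t^+_{i,j+1}$ for $1\le i<j\le n-1$ at once.

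Two consequences for your plan. First, once the $D_n$ transfer is set up correctly it already delivers every off-diagonal row inequality, so the $B_{n-1}$ induction is redundant. Second, your proposed identification (``$t_{i,j}^+$ with $j\le n-1$'') is not the right projection: in $\mathbf{i}^{\tilde B_n}$ the letter $n$ sits at the diagonal positions $(k,k)^+$, so $\pi_B$ forgets $t^+_{k,k}$, not the last block $t^+_{i,n}$. With the wrong projection your ``companion point'' would land outside the $D_n$ string cone, and in any case a $D_n$ string has no coordinate that could bound $t^+_{i,n}$ under your labeling. The word-rearrangement trick is what makes the correct projection line up with $\mathbf{i}^{D_n}$.
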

\begin{proof}
Due to Lemma \ref{LemineqB} we obtain all the inequalities except
\begin{equation}
t^+_{i,j}\geq t^+_{i,j+1}\;\forall\; 1\leq i< j\leq n-1.\label{ineqinductionB}    
\end{equation}

\comment{Due to Theorem \ref{coneproduct} with $I_0=\{1,\dots,n-1\}$ and $I_1=\{1,\dots,n\}$ we know that the string cone $\mathcal{S}_{\mathbf{i}^{B_n}}$ is the direct product of two cones. 
The first one is the string cone for $A_{n-1}$. The second one we denote by $\mathcal{S}_{\mathbf{i}^{\tilde{B}_n}}$.
So, for a dominant weight $\lambda$ for $B_{n}$ and a basis element $v\in V_{B_{n}}(\lambda)$ we can write the string of $v$ in direction $\mathbf{i}^{B_{n}}$ as $t_v=(t_1,t_2)$ with $t_2\in \mathcal{S}_{\mathbf{i}^{\tilde{B}_{n}}}(\lambda)$ and $t_1\in \mathcal{S}_{\mathbf{i}^{A_{n-1}}}(\lambda-t_2\cdot\alpha^T_{\tilde{B}_{n}})$ (see subsection \ref{subsecbp}).\par
As $\underline{0}\in \mathcal{S}_{\mathbf{i}^{A_{n-1}}}(\lambda-t_2\cdot\alpha^T_{\tilde{B}_{n}})$, we also have $t':=(\underline{0},t_2)\in \mathcal{S}_{\mathbf{i}^{B_{n}}}(\lambda)$.
Let $v'$ be the basis element in $V_{B_{n}}(\lambda)$ with string $t'$ in direction $\mathbf{i}^{B_{n}}$.}
We define 
$$\begin{array}{lll}
&\mathbf{i}'^{\tilde{B_n}}=(n,n-1,n,n-2,n-1,n-3,n-2,n,n-1,n,n-4,n-3,n-2,n-1,\dots,\\
&2,\dots,n-2,n-1,1 ,2,\dots,n-2,n,n-1,n)
\end{array}$$
for $n$ even
and 
$$\begin{array}{lll}
&\mathbf{i}'^{\tilde{B_n}}=(n,n-1,n,n-2,n-1,n-3,n-2,n,n-1,n,n-4,n-3,n-2,n-1,\dots,\\
&2,\dots,n-2,n,n-1,n,1 ,2,\dots,n-2,n-1,n)
\end{array}$$
for $n$ odd. This only differs from $\mathbf{i}^{\tilde{B}_n}$ by the exchange of orthogonal reflections. Namely, we move every second $n$ as much to the right as possible. Let $\mathbf{i}'^{B_n}=\mathbf{i}^{A_{n-1}}\mathbf{i}'^{\tilde{B}_n}$.\par
Let $v$ be a basis element in $V_{B_n}(\lambda)$ and $t$ its string in direction $\mathbf{i}'^{B_n}$.
Now, let $\mathfrak{l}$ be the Levi subalgebra of $\mathfrak{g}$ associated to $\alpha_1,\dots,\alpha_{n-2},\alpha_{n-1}+2\alpha_n $. It is isomorphic to $\mathfrak{so}_{2n}$.
Considering $V_{B_n}(\lambda)$ as an $\mathfrak{l}$-module, we thus might compute the string of $v$ in direction $\mathbf{i}^{D_n}$.
As $E_{\alpha_{n-1}+2\alpha_n}$ acts on $v$ as $E_{\alpha_{n-1}}E_{\alpha_n}^2$, $E_{\alpha_n}E_{\alpha_{n-1}}E_{\alpha_n}$ or as $E_{\alpha_{n}}^2E_{\alpha_{n-1}}$, this string is given by $\pi_B(t)$, where $\pi_B:\R^{n^2}\rightarrow \R^{n(n-1)}$ is the projection which forgets all the coordinates $t_k$ such that $\mathbf{i}'^{B_n}_k=n$.\par
As $v$ is a basis element in the $\mathfrak{so}_{2n+1}$-module $V_{B_n}(\lambda)$, it is also a basis element if we consider this module as a $\mathfrak{l}$-module. So, due to Theorem \ref{thmStringCone} and as $\mathbf{i}'^{B_n}$ and $\mathbf{i}^{B_n}$ only differ by the exchange of orthogonal reflections, we get exactly the inequalities (\ref{ineqinductionB}) for $\pi(t)$ and thus also for $t$.  

\comment{

We consider the projection $$\pi_B:\R^{n^2}\rightarrow\R^{(n-1)\cdot n},\;(t^-_{1,1},\dots,t^-_{n-1,n-1},t^+_{1,1},\dots,t^+_{n,n})\mapsto (t^-_{1,1},\dots,t^-_{n-1,n-1},t^+_{1,1},t_{1,3}^+,t_{2,3}^+,\dots,t_{1,n-1}^+,\dots,t^+_{n-2,n-1})$$ which just forgets the coordinates $t^+_{i,i}$ for $1\leq i\leq n$.
If we consider $\mathbf{i}^{D_{n+1}}$ as a tuple living in $\R^{n\cdot(n+1)}$, we can also apply $\pi_D$ on it.
As $\pi_D(\mathbf{i}^{D_{n+1}})$ is a reduced expression for the longest word in the Weyl group of $\mathfrak{l}$ and $(t'^-_{1,1},\dots,t'^-_{n,n})=\underline{0}$, we know due to Lemma \ref{lemres} that, when considering $V_{D_{n+1}}(\lambda)$ as a $\mathfrak{l}-$module, $\pi_D(t')$ is the string of  $v'$ in direction $\pi_D(\mathbf{i}^{D_{n+1}})$.
As $\mathfrak{l}$ is isomorphic to $\mathfrak{so}_{2n}$ by the index shift $i\mapsto i-1$ on the set of simple roots and the same index shift applied on $\pi_D(\mathbf{i}^{D_{n+1}})$ gives $\mathbf{i}^{D_n}$, due to our induction hypothesis $\pi_D(t')$ and therefore also $t'$ and $t$ fulfill the inequalities (\ref{ineqinduction}).

Therefore, we consider the projection $\pi_B:\mathcal{S}_{\mathbf{i}^{B_{n+1}}}\rightarrow \mathcal{S}_{\mathbf{i}^{B_{n}}}$ which just forgets the variables $t^-_{i,n}$ for $1\leq i\leq n$ and $t^+_{i,i}$ for $1\leq i\leq n+1$. If we can show, that this projection is well defined, we are done, as we than get our inequalities ($\ref{ineqinductionB}$) by the induction hypothesis, which claims, that they hold true in $\mathcal{S}_{\mathbf{i}^{B_{n}}}$.\par
Again, we split up our projection for the proof, this time applying Theorem \ref{coneproduct} on
$$\mathbf{i}^{B_n}=\mathbf{i}^{A_{n-1}}\mathbf{i}^{\tilde{B_n}}.$$ Therefore, we set $I_1=\{1,\dots,n-1\}$ and $I_2=\{1,\dots,n\}$. 
So, $R(\omega_0(I_1))$ is the Weyl group for $A_{n-1}$ and $R(\omega_0(I_2))$ is the Weyl group for $B_n$.
We already mentioned that $\mathbf{i}^{(1)}=\mathbf{i}^{A_{n-1}}\in R(\omega_0^{A_{n-1}})$. 
We now need to show that $$\mathbf{i}^{(2)}=\mathbf{i}^{\tilde{B_n}}\in R(\omega_0(I_1)^{(-1)}\omega_0(I_2))=R((\omega_0^{A_{n-1}})^{-1}\omega_0^{\tilde{B_n}}).$$
We have
$$
    \begin{array}{lll}
    \omega_0^{A_{n-1}}=(\omega_0^{A_{n-1}})^{-1}&=
    \left(\begin{array}{lllll}
    1&2&\dots&n-1&n\\
    n&n-1&\dots&2&1
    \end{array}\right),\\\\
    \omega_0^{B_{n}}&=
    \left(\begin{array} {lllll}
    1&2&\dots&n-1&n\\
    -1&-2&\dots&-(n-1)&- n
    \end{array}\right).
    \end{array}
$$
Therefore we get
\begin{equation}\notag
    (\omega_0^{A_{n-1}})^{-1}\omega_0^{B_n}=
    \left(\begin{array}{lllll}
    1&2&\dots&n-1&n\\
    -n&-(n-1)&\dots&-2&- 1
    \end{array}\right).
\end{equation}
Now we can easily verify that $$\mathbf{i}^{\tilde{B_n}}\in R((\omega_0^{A_{n-1}})^{-1}\omega_0^{B_n}).$$
So Theorem \ref{coneproduct} yields that $\mathcal{S}_{\mathbf{i}^{B_n}}$ is the direct product of two cones of dimension $n(n-1)/2$ and $(n+1)n/2$.
For quite obvious reasons we will denote them by $\mathcal{S}_{\mathbf{i}^{A_{n-1}}}$ and $\mathcal{S}_{\mathbf{i}^{\tilde{B_n}}}$.\par
So our projection $\pi_B$ splits up into 
$$\pi_A:\mathcal{S}_{\mathbf{i}^{A_{n}}}\rightarrow \mathcal{S}_{\mathbf{i}^{A_{n-1}}}$$
and $$\pi_{\tilde{B}}:\mathcal{S}_{\mathbf{i}^{\tilde{B}_{n+1}}}\rightarrow \mathcal{S}_{\mathbf{i}^{\tilde{B}_{n}}}.$$
We now show, that both projections are well-defined:\par
The proof for $\pi_A$ is exactly the same as in the $D_n$ case, as we use the same $\mathbf{i}^{A_{n}}$.\par
Let $b_{\tilde{B}}$ be the basis vector in $\mathfrak{B}^{\mathrm{dual}}_{{\tilde{B}}_{n+1}}$ which corresponds to $\sigma_{\tilde{B}}$.
We can also define a projection $$\tilde{\pi}_{\tilde{B}}:\mathfrak{B}_{{\tilde{B}}_{n+1}}^{\mathrm{dual}}\rightarrow \mathfrak{B}_{{\tilde{B}}_{n}}^{\mathrm{dual}}$$ which forgets the last basis vector of ${\tilde{B}}_{n+1}$ and maps all the other vectors to themselves.\par
Now, let $$\sigma_{\tilde{B}}=(\sigma_{1,1},\sigma_{1,2},\sigma_{2,2},\sigma_{1,3},\sigma_{2,3},\sigma_{3,3},\dots,\sigma_{1,1},\dots,\sigma_{n+1,n+1}) \in \mathcal{S}_{\mathbf{i}^{\tilde{B}_n+1}}.$$
Then we have
$$\pi_{\tilde{B}}(\sigma_{\tilde{B}})=(\sigma_{1,2},\sigma_{1,3},\sigma_{2,3},\dots,\sigma_{1,1},\dots,\sigma_{n,n+1}).$$

We now want to show, that $\pi_{\tilde{B}}(\sigma_{\tilde{B}})$ is the string in direction $\mathbf{i}^{{\tilde{B}}_{n}}$ of $\tilde{\pi}_{\tilde{B}}(b_{\tilde{B}})$.\\\par
For $1\leq i<j\leq n+1$ we have $\left(\mathbf{i}^{B_{n+1}}\right)_{i,j}<n+1$, so $E_{\left(\mathbf{i}^{B_{n+1}}\right)_{i,j}}$ does not act on $e_{n+1}$, the last basis vector of ${\tilde{B}}_{n+1}$, which we lost during the projection.\\\par
Furthermore, as $\left(\mathbf{i}^{B_{n+1}}\right)_{i,j}=\left(\mathbf{i}^{B_{n}}\right)_{i,j}$, we have  
$$E_{\left(\mathbf{i}^{B_{n+1}}\right)_{i,j}}\left(v\right)=E_{\left(\mathbf{i}^{B_{n}}\right)_{i,j}}\left(\tilde{\pi}_{\tilde{B}}\left(v\right)\right)$$ 
for any $v\in\mathcal{B}^{\mathrm{dual}}_{\tilde{D}_{n+1}}$.\\\par
Therefore we get indeed $$\pi_{\tilde{B}}(\sigma_{\tilde{B}})=\mathcal{S}_{\mathbf{i}^{B_{n}}}(\tilde{\pi}_{\tilde{B}}(b)).$$ 
This proofs $\pi_{\tilde{B}}(\sigma_{\tilde{B}})\in \mathcal{S}_{\mathbf{i}^{B_{n}}}$ and therefore $\pi_{\tilde{B}}$ is well-defined.\\\par
So also $\pi_B$ is well defined and we can apply our induction hypothesis to $\pi_B(\sigma)\in \mathcal{S}_{\mathbf{i}^{B_{n}}}$ to obtain our desired inequalities. }
\end{proof}
We now want to show, that the string cone $\mathcal{S}_{\mathbf{i}^{B_n}}$ is exactly given by the inequalities (\ref{ineqCiminB}).

Therefore, we use this Theorem and again Littelmann's description of the string cone to give an explicit description of the string cone $\mathcal{S}_{\mathbf{i}^{B_n}}$:
\begin{thm}\label{StringConeB}
The string cone $\mathcal{S}_{\mathbf{i}^{B_n}}$ is exactly given by the set of all points in $t\in \Z^N_{\geq 0}$ satisfying the inequalities (\ref{ineqCiminB}).
\end{thm}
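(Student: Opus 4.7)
The plan is to follow exactly the strategy of the proof of Theorem \ref{thmStringCone} for $D_n$, since the statement for $B_n$ is structurally the same. One inclusion is immediate: Theorem \ref{thmBZfulfillsineqB} shows that every point in $\mathcal{S}_{\mathbf{i}^{B_n}}$ satisfies the inequalities (\ref{ineqCiminB}). For the reverse inclusion, using the rationality of the string cone (Littelmann \cite{Li98}, Proposition 1.5) it suffices to take $t\in\Z^N_{\geq 0}$ satisfying (\ref{ineqCiminB}) and verify the criterion of Theorem \ref{thmrecl}, namely $\Delta^j(k)\geq 0$ for all $1\leq k<j\leq N$.

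The heart of the argument is the following claim, proved by descending induction on $j$ (using the convention $m^j_k=0$ for $k>j$): every $m^j$ again satisfies the inequalities (\ref{ineqCiminB}). The base case $j=N$ holds because $m^N=t$. For the inductive step I split into cases using the double indexing $(k_1,k_2)^\pm$. If $i_k\neq i_j$, then $\Delta^j(k)=m^j_k$, so $m^{j-1}_k=m^j_k$ and the induction hypothesis yields the required inequalities at $k$. If $i_k=i_j$, pick the comparison index $l=(k_1+1,k_2+1)^\pm$ (when it exists in the same block structure); then the sum $\sum_{k<s\leq l}m^j_s\alpha_{i_s}(\alpha_{i_j}^\vee)$ contains only the two contributions coming from $(k_1+1,k_2)^\pm$ and $(k_1,k_2+1)^\pm$, so
\[
\theta((k_1,k_2)^\pm,l,j)=m^j_{(k_1+1,k_2)^\pm}+\bigl(m^j_{(k_1,k_2+1)^\pm}-c\cdot m^j_{(k_1+1,k_2+1)^\pm}\bigr),
\]
where $c$ is the relevant Cartan integer. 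This bound is exactly the rearrangement already used in the $D_n$ case and shows $\Delta^j(k)\geq m^j_{(k_1+1,k_2)^\pm}$, which is nonnegative by the induction hypothesis. Once the claim is established, the inequality $\Delta^j(k)\geq 0$ follows uniformly: in the case $i_k\neq i_j$ from $\Delta^j(k)=m^j_k\geq 0$, and in the case $i_k=i_j$ from $\Delta^j(k)\geq m^j_{(k_1+1,k_2)^\pm}\geq 0$.

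The main obstacle, which makes the proof differ in detail from the $D_n$ case, is the presence of the short simple root $\alpha_n$: the Cartan pairing gives $\alpha_{n-1}(\alpha_n^\vee)=-2$ (while $\alpha_n(\alpha_{n-1}^\vee)=-1$), so the coefficient $c$ in the formula above can equal $2$ rather than $1$ when $i_j=n$. I will need to check case by case that even with this asymmetry the rearrangement $\theta=m^j_{(k_1+1,k_2)^\pm}+(\text{nonneg})$ still produces a nonnegative tail, using that the $m^j_{(k_1,k_2+1)^\pm}\geq m^j_{(k_1+1,k_2+1)^\pm}$ comparison from the induction hypothesis is available along both horizontal and vertical cover relations in Figure \ref{figD2} (with indices extended to $n$). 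The boundary subcases where $(k_1+1,k_2+1)^\pm$ lies outside the admissible range — in particular on the rightmost column of $t^+$ variables — are handled exactly as in Theorem \ref{thmStringCone}, by choosing the nearest admissible $l$ with $\alpha_{i_l}=\alpha_{i_j}$ and dropping vanishing terms from the sum. Routine boundary computations, parallel to those referred to the appendix for $D_n$, complete the proof.
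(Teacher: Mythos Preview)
Your proposal is correct and follows essentially the same approach as the paper's proof (given in the appendix): both establish the reverse inclusion by showing via descending induction on $j$ that every $m^j$ satisfies (\ref{ineqCiminB}), using the bound $\Delta^j(k)\geq\theta(k,(k_1{+}1,k_2{+}1)^\pm,j)$ in the generic case and the nearest admissible $l$ with $\alpha_{i_l}=\alpha_{i_j}$ in the boundary cases. The only $B_n$-specific wrinkles are the diagonal and near-diagonal positions on the $+$ side where the short root $\alpha_n$ enters, and these are precisely the extra subcases you flag for case-by-case verification.
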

The proof does not contain any new stategies compared to the $D_n$ case.
\comment{\begin{proof}
We already know from Theorem \ref{thmBZfulfillsineqB} that all the points in the string cone fulfill the inequalities (\ref{ineqCiminB}). So it remains to show, that each $t\in \Z^{N}$ which fulfills the inequalities is in the string cone. Therefore, we use Theorem \ref{thmrecl}:\par
Let $t\in\Z^N$ such that $t$ fulfills all the inequalities (\ref{ineqCiminB}). 
We now claim the following: for $j=1,\dots,N$ $m^j$ also fulfills the inequalities (\ref{ineqCiminB}). We use the convention $m^j_k=0$ for $k>j$ here.
We will prove this claim by induction on $j$, starting with $j=N$ and going down from $j$ to $j-1$.\par
As $m^N=t$, our induction hypothesis holds true for $j=N$. So we now can assume, that $m^j$ fulfills the inequalities and prove them for $m^{j-1}.$ We will also use our double indication for this proof again,We know writing $k=(k_1,k_2)^-$ for $k\leq\frac{n(n-1)}{2}$ and $k=(k_1,k_2)^+$ for $k>\frac{n(n-1)}{2}$. \par
We always assume $k<j$ in the following and use the convention $m^j_{(k_1,k_2)}=0$ for $k_1>k_2$.\par
For $i_k\neq i_j$ - which is equivalent to $\alpha_{i_k}\neq\alpha_{i_j}$ - we have$\Delta^j(k)=m^j_k$ and so $$m_k^{j-1}=\min\{m_k^j,\Delta^j(k)\}=m_k^j.$$ 
Therefore, for we get by our induction hypothesis:
$$
\begin{array}{lll}
m^{j-1}_{(k_1,k_2)^\pm}&=m^j_{(k_1,k_2)^\pm}\overset{I.H.}{\geq}m^j_{(k_1+1,k_2)^\pm}\\[5pt]
&\geq\min\{m^j_{(k_1+1,k_2)^\pm},\Delta^j((k_1+1,k_2)^\pm)\}=m^{j-1}_{(k_1+1,k_2)^\pm}.
\end{array}    
$$
For $k>\frac{n(n-1)}{2}$ and $k_2<n$, we also get
$$
    \begin{array}{lll}
m^{j-1}_{(k_1,k_2)^+}&=m^j_{(k_1,k_2)^+}\overset{I. H.}{\geq}m^j_{(k_1,k_2+1)^+}\\[5pt]
&\geq\min\{m^j_{(k_1,k_2 +1)^+},\Delta^j((k_1,k_2+1)^+)\}=m^{j-1}_{(k_1,k_2+1)^+}.    
    \end{array}
$$
\par

Now we consider the case $i_k=i_j$. \par
For $k_1\leq k_2<n-1$ and $k<\frac{n(n-1)}{2}$ or $k_1\leq k_2< n,\;k_1\neq k_2-1$ and $k>\frac{n(n-1)}{2}$ we get
$$
    \begin{array}{lll}
\Delta^j((k_1,k_2)^\pm)&=\max\{\theta((k_1,k_2)^\pm,l,j)\mid k<l\leq j,\;\alpha_{i_l}=\alpha_{i_j}\}\\[5pt]
&\geq \theta((k_1,k_2)^\pm,(k_1+1,k_2+1)^\pm,j)\\[5pt]
&=m^j_{(k_1+1,k_2+1)^\pm}-\sum_{k<s\leq (k_1+1,k_2+1)^\pm}m^j_s\alpha_{i_s}(\alpha^{\vee}_{i_j})\\[5pt]
&=m^j_{(k_1+1,k_2+1)^\pm}+m^j_{(k_1+1,k_2)^\pm}+m^j_{(k_1,k_2+1)^\pm}-2m^j_{(k_1+1,k_2+1)^\pm}\\[5pt]
&=m^j_{(k_1+1,k_2)^\pm}+\underbrace{m^j_{(k_1,k_2+1)^\pm}-m^j_{(k_1+1,k_2+1)^\pm}}_{\geq 0 \;(I.H.)}\\[5pt]
&\geq m^j_{(k_1+1,k_2)^\pm}.
    \end{array}
$$
For $k>\frac{n(n-1)}{2}$ we can rewrite our calculation from above to obtain 
$$
    \begin{array}{lll}
\Delta^j((k_1,k_2)^+)&=m^j_{(k_1+1,k_2)^+}+m^j_{(k_1,k_2+1)^+}-m^j_{(k_1+1,k_2+1)^+}\\[5pt]
&=m^j_{(k_1,k_2+1)^+}+\underbrace{m^j_{(k_1+1,k_2)^+}-m^j_{(k_1+1,k_2+1)^+}}_{\geq 0 \;(I.H.)}\\[5pt]
&\geq m^j_{(k_1,k_2+1)^+}.
    \end{array}
$$

The remaining calculations are quite similar and can be found in the appendix.

This finishes our claim.\par
Now, we show $$\Delta^j(k)\geq 0\;\forall 2\leq j\leq N,\;1\leq k\leq j-1.$$ We essentially already proved that in the proof of our claim above and have just to recollect the important statements here.\par
For $2\leq j\leq N$ and $1\leq k\leq j-1$ we find:\par
If $i_k\neq i_j$ 
$$\Delta^j(k)=m^j_k\geq 0,$$
as $m^j$ fulfills (\ref{ineqCiminB}).\par
If $i_k=i_j$
$$\Delta^j(k)\geq m^j_{(k_1+1,k_2)^\pm}\geq 0.$$
This finishes the proof of the theorem.
\end{proof}
}
\section{String cones in type \texorpdfstring{$C_n$}{Cn} }\label{section5}
In the $C_n$ case, we get a factor $2$ in some of our inequalities, but as $B_n$ and $C_n$ share the same Weyl group, the proofs for the results in this subsection are nearly exactly the same as in the previous one, so we will leave them out and just give a short overview of the results for the sake of completeness.\par
We start again by fixing our notation: Let $\mathfrak{g}=\mathfrak{sp}_{2n}$ be the Lie algebra of type $C_n$. 
We realize it as $\mathfrak{sp}_{2n}=\{a\in\mathfrak{gl}_{2n}(\C)\mid a+Ja^TJ^{-1}=0\},$ where $B$ is the skew-symmetric non-degenerate bilinear form on $\C^{2n}$ with the matrix
$$J=\begin{pmatrix}
0&I_n\\
-I_n&0
\end{pmatrix}.
$$
The Cartan subalgebra is $\mathfrak{h}=\{\operatorname{diag}(x_1,\dots,x_n,-x_1,\dots,-x_n)\}$.
A basis of $\mathfrak{h}^*$ is given by $\{\epsilon_1,\dots,\epsilon_n\}$, where 
$$\epsilon_i:\mathrm{diag}(x_1,\dots x_n,-x_1,\dots,-x_n)\mapsto x_i.$$
We enumerate the simple roots by $$\alpha_1=\epsilon_{1}-\epsilon_2,\dots,\alpha_{n-1}=\epsilon_{n-1}-\epsilon_n,\alpha_n=2\epsilon_n.$$ 
The Weyl group is the same as for $B_n$ so we set $\mathbf{i}^{C_n}=\mathbf{i}^{B_n}.$

\begin{lem}\label{LemineqC}
For $\mathfrak{g}=\mathfrak{sp}_{2n}$ all the points in the string cone $\mathcal{S}_{\mathbf{i}^{C_n}}$ satisfy the inequalities 
\begin{equation*}
\begin{aligned}
t_{i,j}^-&\geq t_{i+1,j}^-, \;&\forall\; 1\leq i< j \leq n-1,\\
t_{i,j}^+&\geq t_{i+1,j}^+ \;&\forall\; 1\leq i< j-1 < n,\\
t_{i,i+1}^+&\geq 2t_{i+1,i+1}^+ \;&\forall\; 1\leq i< n,\\
2t_{i,i}^+&\geq t_{i,i+1}^+ \;&\forall\; 1\leq i\leq n-1.    
\end{aligned}
\end{equation*}
The corresponding poset looks the same as in figure \ref{figB1} with the only difference that we have the factor two before each variable in the last column.
\comment{
In other words, the cone is a subcone of the order polyhedron of the following poset:\\

\begin{tikzpicture}
  \tikzstyle{arrow} = [->,>=stealth]
\draw (0,0)node(11-){$t_{1,1}^-$};
\draw (0.8,0)node(12-){$t_{1,2}^-$};\draw (0.8,-1.0)node(22-){$t_{2,2}^-$};
\draw (1.6,0)node(d1-){$\dots$};\draw (1.6,-1.0)node(d2-){$\dots$};
\draw (2.4,0)node(1n-2-){$t_{1,n-2}^-$};;\draw (2.4,-1.0)node(2n-2-){$t_{2,n-2}^-$};\draw (2.4,-2.0)node(d3-){$\dots$};\draw (2.4,-3.0)node(n-2n-2-){$t_{n-2,n-2}^-$};
\draw (3.9,0)node(1n-1-){$t_{1,n-1}^-$};\draw (3.9,-1.0)node(2n-1-){$t_{2,n-1}^-$};\draw (3.9,-2.0)node(d4-){$\dots$};;\draw (3.9,-3.0)node(n-2n-1-){$t_{n-2,n-1}^-$};\draw (3.9,-4.0)node(n-1n-1-){$t_{n-1,n-1}^-$};
\draw[arrow](22-)--(12-);\draw[arrow](2n-2-)--(1n-2-);\draw[arrow](2n-1-)--(1n-1-);
\draw[arrow](d3-)--(2n-2-);\draw[arrow](d4-)--(2n-1-);
\draw[arrow](n-2n-2-)--(d3-);\draw[arrow](n-2n-1-)--(d4-);
\draw[arrow](n-1n-1-)--(n-2n-1-);

\draw (11,0)node(11+){$2t_{1,1}^+$};
\draw (10.0,-1.0)node(12+){$t_{1,2}^+$};
\draw (9.0,-2.0)node(13+){$t_{1,3}^+$};\draw (11.0,-2.0)node(22+){$2t_{2,2}^+$};
\draw (8.0,-3.0)node(14+){$t_{1,4}^+$};\draw (10.0,-3.0)node(23+){$t_{2,3}^+$};
\draw (7.0,-4.0)node(15+){$t_{1,5}^+$};\draw (9.0,-4.0)node(24+){$t_{2,4}^+$};\draw (11.0,-4.0)node(33+){$2t_{3,3}^+$};
\draw (6.0,-5.0)node(d1+){};\draw (8.0,-5.0)node(d2+){};\draw (10.0,-5.0)node(d3+){};\draw (9.0,-5.0)node(d4+){};\draw (11.0,-7.0)node(d5+){};\draw (5.0,-7.0)node(d6+){};\draw (9.0,-7.0)node(d7+){};\draw (10.0,-6.5)node(d8+){$\dots$};\draw (7.0,-7.0)node(d9+){};\draw (11.0,-7.0)node(d10+){};
\draw (6.0,-8.0)node(n-5n+){$t_{n-5,n}^+$};\draw (8.0,-8.0)node(n-4n-1+){$t_{n-4,n-1}^+$};\draw (10.0,-8.0)node(n-3n-2+){$t_{n-3,n-2}^+$};
\draw (7.0,-9.0)node(n-4n+){$t_{n-4,n}^+$};\draw (9.0,-9.0)node(n-3n-1+){$t_{n-3,n-1}^+$};\draw (11.0,-9.0)node(n-2n-2+){$2t_{n-2,n-2}^+$};
\draw (8.0,-10.0)node(n-3n+){$t_{n-3,n}^+$};\draw (10.0,-10.0)node(n-2n-1+){$t_{n-2,n-1}^+$};
\draw (9.0,-11.0)node(n-2n+){$t_{n-2,n}^+$};\draw (11.0,-11.0)node(n-1n-1+){$2t_{n-1,n-1}^+$};

\draw (10.0,-12.0)node(n-1n+){$t_{n-1,n}^+$};
\draw (11.0,-13.0)node(nn+){$2t_{n,n}^+$};
\draw[arrow](12+)--(11+);
\draw[arrow](22+)--(12+);
\draw[arrow](23+)--(13+);\draw[arrow](23+)--(22+);
\draw[arrow](24+)--(14+);\draw[arrow](33+)--(23+);
\draw[arrow](d2+)--(15+);\draw[arrow](d3+)--(24+);\draw[arrow](d3+)--(33+);
\draw[arrow](n-5n+)--(d6+);\draw[arrow](n-4n-1+)--(d9+);\draw[arrow](n-3n-2+)--(d7+);\draw[arrow](n-3n-2+)--(d10+);

\draw[arrow](n-4n+)--(n-5n+);\draw[arrow](n-3n-1+)--(n-4n-1+);\draw[arrow](n-2n-2+)--(n-3n-2+);\draw[arrow](n-2n-1+)--(n-2n-2+);
\draw[arrow](n-3n+)--(n-4n+);\draw[arrow](n-2n-1+)--(n-3n-1+);
\draw[arrow](n-2n+)--(n-3n+);\draw[arrow](n-1n-1+)--(n-2n-1+);
\draw[arrow](n-1n+)--(n-2n+);\draw[arrow](n-1n+)--(n-1n-1+);
\draw[arrow](nn+)--(n-1n+);
\end{tikzpicture}
}

\end{lem}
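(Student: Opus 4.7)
The plan is to carry out the proof by mimicking the argument for Lemma \ref{LemineqB} line for line, applying Theorem \ref{BZcone} in exactly the same way. Since the Weyl group of $C_n$ coincides with that of $B_n$ and the author has chosen $\mathbf{i}^{C_n} = \mathbf{i}^{B_n}$, the minimal coset representatives $z^{(i)}$ of $W_{\hat{i}} s_i \omega_0$ and the concrete subwords $\mathbf{i}^{(i)} = \mathbf{i}^{A_{n-1},(i)} \mathbf{i}^{\tilde{C_n},(i)}_1 \mathbf{i}^{\tilde{C_n},(i)}_2$ appearing in the proof of Lemma \ref{LemineqB} transfer verbatim to $C_n$. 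The plan is to reuse precisely these subwords and then recompute the Berenstein-Zelevinsky coefficients
\begin{equation*}
(s_{i_{j_1}} \cdots s_{i_{j_p}} \alpha_{i_k})(w_i^\vee)
\end{equation*}
in the $C_n$ root datum.

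The single structural difference between the two types is that $\alpha_n = \epsilon_n$ in $B_n$ while $\alpha_n = 2\epsilon_n$ in $C_n$; moreover, for $i < n$ the fundamental coweight $w_i^\vee = \epsilon_1^\vee + \dots + \epsilon_i^\vee$ is identical in both types. Under the double indexing, the positions $k$ of $\mathbf{i}^{C_n}$ with $i_k = n$ correspond exactly to the diagonal variables $t_{j,j}^+$ for $j = 1, \dots, n$. Since the Weyl group acts linearly on $\mathfrak{h}^*$, whenever $\alpha_{i_k} = \alpha_n$ the resulting Berenstein-Zelevinsky coefficient attached to the corresponding diagonal variable is exactly twice the $B_n$ coefficient, while for every non-diagonal position the computation is unchanged. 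Crucially, every vanishing coefficient in the $B_n$ argument (the zero terms collected in the appendix) remains zero in $C_n$, since rescaling by $2$ preserves the kernel of the evaluation map; no new nonzero terms appear.

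Collecting the nonzero contributions, each inequality from Lemma \ref{LemineqB} transfers to $C_n$ by doubling the coefficient of every diagonal variable that occurs. Concretely, the $B_n$ inequality $t^+_{i,m} \geq t^+_{i+1,m}$ is preserved unchanged when $i+1 < m$ (no diagonal variable present), becomes $t^+_{i,i+1} \geq 2\, t^+_{i+1,i+1}$ when $m = i+1$ (the variable $t^+_{i+1,i+1}$ is diagonal), and the $B_n$ inequality $t^+_{i,i} \geq t^+_{i,i+1}$ becomes $2\, t^+_{i,i} \geq t^+_{i,i+1}$. The $t^-$ inequalities arise from the $A_{n-1}$ portion of the decomposition, which involves only the simple roots $\alpha_1, \dots, \alpha_{n-1}$; these agree in $B_n$ and $C_n$, so the corresponding inequalities are preserved verbatim.

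The main obstacle, which is entirely a matter of bookkeeping rather than mathematical substance, is keeping careful track of the factors of $2$ through the case analysis inherited from the $B_n$ proof, in particular verifying the claim that the diagonal variables $t^+_{j,j}$ are precisely the ones on which the coefficients are rescaled. Given the linearity of the Weyl action and the explicit identification of the positions $k$ with $i_k = n$, this verification is routine and the full proof proceeds by direct translation of each computational step from the proof of Lemma \ref{LemineqB}, with the substitution $\epsilon_n \mapsto \frac{1}{2}\alpha_n$ producing the factor of $2$ at the appropriate places.
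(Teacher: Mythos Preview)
Your proposal is correct and takes essentially the same approach as the paper, which explicitly omits the proof with the remark that ``as $B_n$ and $C_n$ share the same Weyl group, the proofs for the results in this subsection are nearly exactly the same as in the previous one.'' Your explanation of how the factor $2$ arises from $\alpha_n = 2\epsilon_n$ (affecting precisely the diagonal variables $t^+_{j,j}$) and why all other coefficients carry over unchanged is exactly the bookkeeping the paper leaves implicit.
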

\begin{thm}\label{thmBZfulfillsineqC}
For $\mathfrak{g}=\mathfrak{sp}_{2n}$ all the points in the string cone $\mathcal{S}_{\mathbf{i}^{C_n}}$ satisfy the inequalities 
\begin{equation}\notag
\begin{aligned}
t_{i,j}^-&\geq t_{i+1,j}^- \;&\forall 1\leq i< j \leq n-1,\\
t_{i,j}^+&\geq t_{i+1,j}^+ \;&\forall 1\leq i< j-1< n,\\
t_{i,i+1}^+&\geq 2t_{i+1,i+1}^+ \;&\forall 1\leq i< n,    \\
t_{i,j}^+&\geq t_{i,j+1}^+ \;&\forall 1\leq i<  j\leq n-1,\\
2t_{i,i}^+&\geq t_{i,i+1}^+ \;&\forall 1\leq i< n.    \label{ineqCiminC}
\end{aligned}
\end{equation}
The corresponding poset looks the same as in figure \ref{figD2} with the only difference that the indices of the $t^+$ variables go up to $n$ instead of $n-1$ and we have a factor two before each variable in the last column.
\comment{
In other words, the cone is a subcone of the order polyhedron of the following poset:\\
\begin{tikzpicture}
  \tikzstyle{arrow} = [->,>=stealth]
\draw (0,0)node(11-){$t_{1,1}^-$};
\draw (0.8,0)node(12-){$t_{1,2}^-$};\draw (0.8,-1.0)node(22-){$t_{2,2}^-$};
\draw (1.6,0)node(d1-){$\dots$};\draw (1.6,-1.0)node(d2-){$\dots$};
\draw (2.4,0)node(1n-2-){$t_{1,n-2}^-$};;\draw (2.4,-1.0)node(2n-2-){$t_{2,n-2}^-$};\draw (2.4,-2.0)node(d3-){$\dots$};\draw (2.4,-3.0)node(n-2n-2-){$t_{n-2,n-2}^-$};
\draw (3.9,0)node(1n-1-){$t_{1,n-1}^-$};\draw (3.9,-1.0)node(2n-1-){$t_{2,n-1}^-$};\draw (3.9,-2.0)node(d4-){$\dots$};;\draw (3.9,-3.0)node(n-2n-1-){$t_{n-2,n-1}^-$};\draw (3.9,-4.0)node(n-1n-1-){$t_{n-1,n-1}^-$};
\draw[arrow](22-)--(12-);\draw[arrow](2n-2-)--(1n-2-);\draw[arrow](2n-1-)--(1n-1-);
\draw[arrow](d3-)--(2n-2-);\draw[arrow](d4-)--(2n-1-);
\draw[arrow](n-2n-2-)--(d3-);\draw[arrow](n-2n-1-)--(d4-);
\draw[arrow](n-1n-1-)--(n-2n-1-);

\draw (11,0)node(11+){$2t_{1,1}^+$};
\draw (10.0,-1.0)node(12+){$t_{1,2}^+$};
\draw (9.0,-2.0)node(13+){$t_{1,3}^+$};\draw (11.0,-2.0)node(22+){$2t_{2,2}^+$};
\draw (8.0,-3.0)node(14+){$t_{1,4}^+$};\draw (10.0,-3.0)node(23+){$t_{2,3}^+$};
\draw (7.0,-4.0)node(15+){$t_{1,5}^+$};\draw (9.0,-4.0)node(24+){$t_{2,4}^+$};\draw (11.0,-4.0)node(33+){$2t_{3,3}^+$};
\draw (6.0,-5.0)node(d1+){};\draw (8.0,-5.0)node(d2+){};\draw (10.0,-5.0)node(d3+){};\draw (9.0,-5.0)node(d4+){};\draw (11.0,-7.0)node(d5+){};\draw (5.0,-7.0)node(d6+){};\draw (9.0,-7.0)node(d7+){};\draw (10.0,-6.5)node(d8+){$\dots$};\draw (7.0,-7.0)node(d9+){};\draw (11.0,-7.0)node(d10+){};
\draw (6.0,-8.0)node(n-5n+){$t_{n-5,n}^+$};\draw (8.0,-8.0)node(n-4n-1+){$t_{n-4,n-1}^+$};\draw (10.0,-8.0)node(n-3n-2+){$t_{n-3,n-2}^+$};
\draw (7.0,-9.0)node(n-4n+){$t_{n-4,n}^+$};\draw (9.0,-9.0)node(n-3n-1+){$t_{n-3,n-1}^+$};\draw (11.0,-9.0)node(n-2n-2+){$2t_{n-2,n-2}^+$};
\draw (8.0,-10.0)node(n-3n+){$t_{n-3,n}^+$};\draw (10.0,-10.0)node(n-2n-1+){$t_{n-2,n-1}^+$};
\draw (9.0,-11.0)node(n-2n+){$t_{n-2,n}^+$};\draw (11.0,-11.0)node(n-1n-1+){$2t_{n-1,n-1}^+$};

\draw (10.0,-12.0)node(n-1n+){$t_{n-1,n}^+$};
\draw (11.0,-13.0)node(nn+){$2t_{n,n}^+$};
\draw[arrow](12+)--(11+);
\draw[arrow](13+)--(12+);\draw[arrow](22+)--(12+);
\draw[arrow](14+)--(13+);\draw[arrow](23+)--(13+);\draw[arrow](23+)--(22+);
\draw[arrow](15+)--(14+);\draw[arrow](24+)--(14+);\draw[arrow](24+)--(23+);\draw[arrow](33+)--(23+);
\draw[arrow](d1+)--(15+);\draw[arrow](d2+)--(15+);\draw[arrow](d2+)--(24+);\draw[arrow](d3+)--(24+);\draw[arrow](d3+)--(33+);
\draw[arrow](n-5n+)--(d6+);\draw[arrow](n-5n+)--(d9+);\draw[arrow](n-4n-1+)--(d9+);\draw[arrow](n-3n-2+)--(d7+);\draw[arrow](n-4n-1+)--(d7+);\draw[arrow](n-3n-2+)--(d10+);

\draw[arrow](n-4n+)--(n-5n+);\draw[arrow](n-4n+)--(n-4n-1+);\draw[arrow](n-3n-1+)--(n-4n-1+);\draw[arrow](n-3n-1+)--(n-3n-2+);\draw[arrow](n-2n-2+)--(n-3n-2+);\draw[arrow](n-2n-1+)--(n-2n-2+);
\draw[arrow](n-3n+)--(n-4n+);\draw[arrow](n-3n+)--(n-3n-1+);\draw[arrow](n-2n-1+)--(n-3n-1+);
\draw[arrow](n-2n+)--(n-3n+);\draw[arrow](n-2n+)--(n-2n-1+);\draw[arrow](n-1n-1+)--(n-2n-1+);
\draw[arrow](n-1n+)--(n-2n+);\draw[arrow](n-1n+)--(n-1n-1+);
\draw[arrow](nn+)--(n-1n+);
\end{tikzpicture}
}
\end{thm}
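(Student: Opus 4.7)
The plan is to mirror the proof of Theorem \ref{thmBZfulfillsineqB} almost verbatim. By Lemma \ref{LemineqC} every inequality listed in the statement is already in hand except the "horizontal" ones
\[
t^+_{i,j} \geq t^+_{i,j+1}, \qquad 1 \leq i < j \leq n-1,
\]
so I concentrate on those. First I would build a modified reduced word $\mathbf{i}'^{\tilde{C}_n}$ from $\mathbf{i}^{\tilde{C}_n}$ by pushing appropriate copies of the letter $n$ as far to the right as commutation of orthogonal simple reflections allows, exactly as $\mathbf{i}'^{\tilde{B}_n}$ was produced from $\mathbf{i}^{\tilde{B}_n}$; set $\mathbf{i}'^{C_n} := \mathbf{i}^{A_{n-1}} \mathbf{i}'^{\tilde{C}_n}$. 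Because only commuting reflections are swapped, $\mathcal{S}_{\mathbf{i}'^{C_n}}$ agrees with $\mathcal{S}_{\mathbf{i}^{C_n}}$ up to a relabeling of coordinates, so it suffices to prove the missing inequalities in direction $\mathbf{i}'^{C_n}$.

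Next, I would consider the subalgebra $\mathfrak{l} \subseteq \mathfrak{sp}_{2n}$ generated by the root subspaces attached to $\alpha_1,\dots,\alpha_{n-1}$ together with $\alpha_{n-1}+\alpha_n$. Since $\alpha_{n-1}+\alpha_n = \epsilon_{n-1}+\epsilon_n$ has the same length as $\alpha_1,\dots,\alpha_{n-1}$ in $\mathfrak{sp}_{2n}$, these form a simple system of type $D_n$, and $\mathfrak{l} \cong \mathfrak{so}_{2n}$. Fix a basis element $v \in V_{C_n}(\lambda)$ and let $t$ be its string in direction $\mathbf{i}'^{C_n}$. Regarding $V_{C_n}(\lambda)$ as an $\mathfrak{l}$-module via Littelmann's restriction rule, define the projection $\pi_C$ that forgets the coordinates $t_k$ with $(\mathbf{i}'^{C_n})_k = n$. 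As in the $B_n$ proof -- where a neighbouring triple of letters $n, n-1, n$ realised one application of the root operator for $\alpha_{n-1}+2\alpha_n$ -- I would check that a neighbouring pair $n, n-1$ in $\mathbf{i}'^{C_n}$ records one application of the root operator for $\alpha_{n-1}+\alpha_n$, so that $\pi_C(t)$ is the string of $v$ in direction $\mathbf{i}^{D_n}$ relative to $\mathfrak{l}$. Applying Theorem \ref{thmStringCone} to $\pi_C(t) \in \mathcal{S}_{\mathbf{i}^{D_n}}$ then yields the desired inequalities for $\pi_C(t)$, hence for $t$.

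The main obstacle is the verification in the previous paragraph that $\pi_C(t)$ really is the $D_n$-string. In the $B_n$ situation the identification was clean because $E_{\alpha_{n-1}+2\alpha_n}$ equals, up to a fixed scalar, a divided-power monomial $E_{\alpha_n}^{(2)} E_{\alpha_{n-1}}$ and its Weyl-rearrangements, so the middle $n-1$ of a block $n,n-1,n$ unambiguously records the exponent of the composite root operator. In the $C_n$ situation the corresponding relation is only the single commutator $[E_{\alpha_{n-1}}, E_{\alpha_n}] \sim E_{\alpha_{n-1}+\alpha_n}$, and $\alpha_n$ is now the long root rather than the short one, so care is needed to confirm that the scalar coming from the commutator is absorbed by the divided-power normalisation of the dual canonical basis for $\mathfrak{l}$ and does not perturb any of the string values. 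Once this bookkeeping is settled, the remainder of the argument proceeds word for word as in the proof of Theorem \ref{thmBZfulfillsineqB}.
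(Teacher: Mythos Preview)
Your proposal is correct and follows essentially the same approach as the paper. The paper does not actually write out a proof of Theorem \ref{thmBZfulfillsineqC}; it simply remarks that since $B_n$ and $C_n$ share the same Weyl group, the argument is ``nearly exactly the same'' as for Theorem \ref{thmBZfulfillsineqB}, and your adaptation --- replacing the $B_n$ subalgebra attached to $\alpha_1,\dots,\alpha_{n-1},\alpha_{n-1}+2\alpha_n$ by the $D_n$ subalgebra of $\mathfrak{sp}_{2n}$ attached to $\alpha_1,\dots,\alpha_{n-1},\alpha_{n-1}+\alpha_n=\epsilon_{n-1}+\epsilon_n$, and correspondingly reading a neighbouring pair $n,n-1$ rather than a triple $n,n-1,n$ as one application of the new root operator --- is exactly the modification the paper has in mind. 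Your caution about the scalar bookkeeping in identifying $\pi_C(t)$ with the $D_n$-string is reasonable, but note that the paper's own $B_n$ argument treats the analogous point equally informally; the justification in both cases really comes from the compatibility of Kashiwara's crystal operators under Levi-type restriction rather than from an explicit identity among enveloping-algebra elements.
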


\begin{thm}\label{StringConeC}
The string cone $\mathcal{S}_{\mathbf{i}^{C_n}}$ is exactly given by the set of all points in $t\in \Z^N_{\geq 0}$ satisfying the inequalities (\ref{ineqCiminC}).
\end{thm}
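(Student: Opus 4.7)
The plan is to follow the template used in the proof of Theorem \ref{thmStringCone}. One inclusion is supplied by Theorem \ref{thmBZfulfillsineqC}, which tells us that every point of $\mathcal{S}_{\mathbf{i}^{C_n}}$ satisfies (\ref{ineqCiminC}). For the converse, since the string cone is rational (\cite{Li98}, Proposition 1.5), it is enough to check that any lattice point $t \in \Z^N_{\geq 0}$ fulfilling (\ref{ineqCiminC}) belongs to $\mathcal{S}^L_{\mathbf{i}^{C_n}}$, and by Theorem \ref{thmrecl} this amounts to establishing $\Delta^j(k) \geq 0$ for all $1 \leq k < j \leq N$.

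As in the $D_n$ and $B_n$ cases the key auxiliary claim, proved by downward induction on $j$ from $j = N$, is that each truncation $m^j$ itself satisfies the inequality system (\ref{ineqCiminC}), with the convention $m^j_k = 0$ for $k > j$. The base case is trivial since $m^N = t$. For the inductive step I would use the double indexing $k = (k_1, k_2)^\pm$: when $i_k \neq i_j$ one has $\Delta^j(k) = m^j_k$, so $m^{j-1}_k = m^j_k$ and the inequalities transport unchanged; when $i_k = i_j$ the idea is to bound $\Delta^j(k)$ from below by $\theta(k, (k_1+1, k_2+1)^\pm, j)$, expand this quantity using the pairings $\alpha_{i_s}(\alpha_{i_j}^\vee)$, and then apply the inductive hypothesis to isolate the correct neighbouring entry of $m^j$.

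The main obstacle compared with the $D_n$ proof is the asymmetry of the Cartan pairings in type $C_n$: $\langle \alpha_{n-1}, \alpha_n^\vee \rangle = -1$ while $\langle \alpha_n, \alpha_{n-1}^\vee \rangle = -2$. In the expansion of $\theta(k, l, j)$ these pairings appear as coefficients of the entries $m^j_s$, so a factor $2$ emerges whenever $i_s$ and $i_j$ combine in the long-root direction. These are precisely the factors that produce the weighted inequalities $t^+_{i,i+1} \geq 2 t^+_{i+1,i+1}$ and $2 t^+_{i,i} \geq t^+_{i,i+1}$, and so the induction step has to split into several subcases depending on whether $i_k$, $i_j$, or the indices between $k$ and $l$ equal $n$. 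In each subcase one chooses the intermediate $l$ (not always $(k_1+1, k_2+1)^\pm$ as in $D_n$) so that the coefficients produced by expanding $\theta(k, l, j)$ match exactly the weight appearing in (\ref{ineqCiminC}).

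Once the claim is established, the nonnegativity of $\Delta^j(k)$ follows at once: for $i_k \neq i_j$ one has $\Delta^j(k) = m^j_k \geq 0$ directly from (\ref{ineqCiminC}); for $i_k = i_j$ the estimate produced in the induction step is a (possibly $2$-weighted) entry of $m^j$, which is nonnegative by the same inequalities. Theorem \ref{thmrecl} then yields $t \in \mathcal{S}^L_{\mathbf{i}^{C_n}}$, completing the argument. The calculations themselves are routine bookkeeping very close to those in the $D_n$ proof, with the factors of $2$ being the only genuinely new ingredient, which is why the author is justified in omitting the details.
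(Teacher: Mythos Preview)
Your proposal is correct and follows exactly the approach the paper intends: the author explicitly omits the proof, stating that since $B_n$ and $C_n$ share the same Weyl group the argument is ``nearly exactly the same'' as for Theorems \ref{thmStringCone} and \ref{StringConeB}, with only the factor~$2$ from the asymmetric Cartan pairings as a new ingredient. Your outline reproduces that template faithfully, including the correct identification of where and why the factors of $2$ enter the $\theta$-expansions.
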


\section{Branching}\label{section6}
Our choices of $\mathbf{i}$ in the three previous sections allow us to consider the string branching polytopes as mentioned in subsection \ref{subsecbp}. For example, for $D_n$ we obtain a decomposition
$$\displaystyle{ \mathcal{S}^L_{\mathbf{i}^{D_n}}(\lambda)=\Dot{\bigcup}_{t\in \mathcal{S}^L_{\mathbf{i}^{\tilde{D}_n}}(\lambda)}\mathcal{S}^L_{\mathbf{i}^{A_{n-1}}}(\lambda-t\cdot\alpha^T_{\tilde{D}})\times \{t\}.}$$
This means, that the highest weight module $V_{D_n}(\lambda)$ can be written as
$$\displaystyle{ V_{D_n}(\lambda)\cong \bigoplus_{v\in \mathcal{S}^L_{\mathbf{i}^{\tilde{D}_n}}(\lambda)}V_{A_{n-1}}(\lambda-v\cdot\alpha^T_{\tilde{D}}).}$$
A similar decomposition is possible for the Lusztig polytope:
$$\displaystyle{ \mathcal{L}^L_{\mathbf{i}^{D_n}}(\lambda)=\Dot{\bigcup}_{u\in \mathcal{L}^L_{\mathbf{i}^{\tilde{D}_n}}(\lambda)}\mathcal{L}^L_{\mathbf{i}^{A_{n-1}}}(\lambda-v\cdot \beta^T_{\tilde{D}})\times \{u\}}$$
where $\beta_{\tilde{D}}=(\beta_{\mathbf{i}^{D_n},\frac{N}{2}+1},\dots,\beta_{\mathbf{i}^{D_n},N}).$
The string branching polytopes are defined by the inequalities for the $t^+$ variables from Theorems \ref{thmBZfulfillsineq}, \ref{thmBZfulfillsineqB} and \ref{thmBZfulfillsineqC} and the additional weight inequalities which can be read off from Theorem \ref{Proppoly}.

From Theorems \ref{StringLusztig} and \ref{LusztigString} we now deduce the following Theorem describing the Lusztig branching polytopes:

\begin{thm}\label{thmbranchingD}
\begin{enumerate}
    \item The Lusztig branching polytope $\mathcal{L}_{\mathbf{i}^{\tilde{D}_n}}(\lambda)$ is defined by the inequalities 
$$\begin{array}{ccc}
      \sum\limits_{k=j}^{n-1}u_{i,k}^+\leq \sum\limits_{k=j+1}^{n-1}u_{i+1,k}^+ +\lambda_i\;\forall\; 1\leq i<j\leq n-1,\\
      \sum\limits_{k=i}^{j}u_{k,j}^+ +\sum\limits_{k=j+2}^{n-1}u_{j+1,k}^+\leq \sum\limits_{k=i+1}^{j}u_{k,j+1}^+ +\sum\limits_{k=j+2}^{n-1}u_{j+2,k}^+ +\lambda_{j+1}\;\forall\; 1\leq i\leq j< n-1,\\
      u_{n-1,n-1}^+\leq\lambda_n,\\
      u^+_{i,j}\geq 0\;\forall\; 1\leq i\leq j\leq n-1.
\end{array}$$
\item The Lusztig branching polytope $\mathcal{L}_{\mathbf{i}^{\tilde{B}_n}}(\lambda)$ is defined by the inequalities 
$$\begin{array}{ccc}
      \sum\limits_{k=j}^{n}u_{i,k}^+\leq \sum\limits_{k=j+1}^{n}u_{i+1,k}^+ +\lambda_i\;\forall\; 1\leq i<j\leq n,\\
      \sum\limits_{k=i}^{j}u_{k,j}^+ +\sum\limits_{k=j+1}^{n}u_{j,k}^+\leq \sum\limits_{k=i+1}^{j+1}u_{k,j+1}^+ +\sum\limits_{k=j+2}^{n}u_{j+1,k}^+ +\lambda_{j}\;\forall\; 1\leq i\leq j< n,\\
      u_{n,n}^+\leq\lambda_n,\\
      u^+_{i,j}\geq 0\;\forall\; 1\leq i\leq j\leq n.
\end{array}$$
\item The Lusztig branching polytope $\mathcal{L}_{\mathbf{i}^{\tilde{C}_n}}(\lambda)$ is defined by the inequalities 
$$\begin{array}{ccc}
      \sum\limits_{k=j}^{n}u_{i,k}^+\leq \sum\limits_{k=j+1}^{n}u_{i+1,k}^+ +\lambda_i\;\forall\; 1\leq i< j\leq n,\\
      \sum\limits_{k=i}^{j}u_{k,j}^+ +\sum\limits_{k=j}^{n}u_{j,k}^+\leq \sum\limits_{k=i+1}^{j+1}u_{k,j+1}^+ +\sum\limits_{k=j+1}^{n}u_{j+1,k}^+ +\lambda_{j}\;\forall\; 1\leq i\leq j< n,\\
      u_{n,n}^+\leq\lambda_n,\\
      u^+_{i,j}\geq 0\;\forall\; 1\leq i\leq j\leq n.
\end{array}$$

\end{enumerate}
\end{thm}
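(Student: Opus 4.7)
The plan is to transport the defining inequalities of the string branching polytope to the Lusztig side via the affine bijection $\psi:\mathcal{L}_{\mathbf{i}^{\tilde{D}_n}}(\lambda)\to\mathcal{S}_{\mathbf{i}^{\tilde{D}_n}}(\lambda)$ supplied by Theorem \ref{LusztigString} (with the analogous maps in types $B_n$ and $C_n$). Since $\psi$ is an affine linear bijection between the two polytopes, substituting $t=\psi(u)$ into any finite list of linear inequalities cutting out the string branching polytope produces a list of linear inequalities cutting out the Lusztig branching polytope, and conversely.

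First, I would assemble the defining inequalities of the string branching polytope. The cone part is the $t^+$-inequalities extracted from Theorems \ref{thmStringCone}, \ref{StringConeB} and \ref{StringConeC}. The weight part comes from Proposition \ref{Proppoly}: precisely the last $N/2$ of the recursive inequalities listed there, i.e.\ those with index $k>N/2$. Because the $t^+$-variables occupy the tail of $\mathbf{i}$, these weight inequalities involve only the $t^+$-coordinates and hence cut out $\mathcal{S}_{\mathbf{i}^{\tilde{D}_n}}(\lambda)$ without reference to any $t^-$-variable. Together with the cone inequalities, they give a finite linear description of the string branching polytope.

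Second, I would compute $\psi$ explicitly on the $\tilde{D}_n$-block. From $\beta_{\mathbf{i},k}=s_{i_1}\cdots s_{i_{k-1}}(\alpha_{i_k})$ and the block pattern of $\mathbf{i}^{\tilde{D}_n}$, one checks that the positive roots indexed by $k>N/2$ are precisely the sums $\epsilon_r+\epsilon_s$ with $r<s$, naturally labelled by the double index $(i,j)^+$. The analogous check in types $B_n$ and $C_n$ yields the same sums together with the short roots $\epsilon_r$ or $2\epsilon_r$. With these roots identified, the coefficients $d_{i_ki_j}=\langle\beta_{\mathbf{i},j},\beta^{\vee}_{\mathbf{i},k}\rangle$ and shifts $l_k=\langle\lambda,\beta^{\vee}_{\mathbf{i},k}\rangle$ reduce to direct inner product calculations, producing an explicit affine formula for each $t^+_{i,j}$ as a combination of certain $u^+_{i',j'}$ plus a multiple of $\lambda_i$ or $\lambda_{j+1}$.

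Finally, I would substitute these formulas into the string branching inequalities and simplify. The cone inequality $t^+_{i,j}\geq t^+_{i+1,j}$ is expected to telescope into the row-sum inequality $\sum_{k=j}^{n-1}u^+_{i,k}\leq\sum_{k=j+1}^{n-1}u^+_{i+1,k}+\lambda_i$, the inequality $t^+_{i,j}\geq t^+_{i,j+1}$ into the asserted mixed row/column inequality indexed by $(i,j)$, and the terminal weight inequality of Proposition \ref{Proppoly} into the boundary inequality $u^+_{n-1,n-1}\leq\lambda_n$ (respectively $u^+_{n,n}\leq\lambda_n$ in types $B_n$ and $C_n$). Positivity $u^+_{i,j}\geq 0$ is built into the Lusztig cone $\mathcal{L}_{\mathbf{i}}=\mathbb{R}^N_{\geq 0}$. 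The main obstacle is the combinatorial bookkeeping for $\psi$: tracking which $u^+$-variables appear in $t^+_{i,j}$ so that the telescoping produces exactly the claimed sums, and separately accounting for the factor $2$ that arises from the short roots in type $C_n$.
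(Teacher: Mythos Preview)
Your approach is essentially the paper's: compute the positive roots indexed by the $\tilde{D}_n$-block, write down $\psi(u)^+_{i,j}$ explicitly from Theorem \ref{LusztigString}, and substitute into the string branching inequalities. One correction, however: you have the last two correspondences swapped. The inequality $u^+_{n-1,n-1}\leq\lambda_n$ does \emph{not} arise from the terminal weight inequality of Proposition \ref{Proppoly}; it comes from the string cone constraint $t^+_{n-1,n-1}\geq 0$, since $\psi(u)^+_{n-1,n-1}=\lambda_n-u^+_{n-1,n-1}$. Conversely, the weight inequalities \eqref{ineqpoly} are exactly the conditions $\varphi(t)_k\geq 0$, i.e.\ $u^+_{i,j}\geq 0$ --- so these are not merely ``built into'' the Lusztig cone but are precisely what the weight inequalities transform into under $\varphi$. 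With that swap corrected, your outline matches the paper.
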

\begin{proof}
We only prove $(1)$ here, the proofs for the other cases follow the same idea. We use the bijections $\varphi$ and $\psi$ to translate the inequalities from the string polytope to the Lusztig polytope. For $\mathbf{i}=\mathbf{i}^{D_n}$ the ordering on the set of positive roots is the following: 
$$\varepsilon_{n-1}-\varepsilon_n,\varepsilon_{n-2}-\varepsilon_n,\varepsilon_{n-2}-\varepsilon_{n-1},\dots,\varepsilon_1-\varepsilon_n,\dots,\varepsilon_1-\varepsilon_2,\varepsilon_1+\varepsilon_2,\varepsilon_1+\varepsilon_3,\varepsilon_2+\varepsilon_3,\dots,\varepsilon_1+\varepsilon_n,\dots,\varepsilon_{n-1}+\varepsilon_n.$$
So for $1\leq i\leq j<n-1$ we get from Theorem \ref{LusztigString}:
$$\psi(u)^+_{i,j}=\sum_{k=i}^{n}\lambda_k+\sum_{k=j+1}^{n-2}\lambda_k-u^+_{i,j}-\sum_{k=i+1}^j u_{k,j}^+-\sum_{k=j+1}^{n-1}(u_{i,k}^++u_{j+1,k}^+).$$
For $1\leq i\leq j=n-1$ we get:
$$\psi(u)^+_{i,j}=\sum_{k=i}^{n-2}\lambda_k+\lambda_n+\sum_{k=j+1}^{n-2}\lambda_k-u^+_{i,j}-\sum_{k=i+1}^j u_{k,j}^+-\sum_{k=j+1}^{n-1}(u_{i,k}^++u_{j+1,k}^+).$$
So we have for $1\leq i<j\leq n-1,$
$$\begin{array}{lll}
\psi(u)_{i,j}^+\geq\psi(u)_{i+1,j}^+&\Leftrightarrow \lambda_i-u_{i,j}^+-\sum\limits_{k=j+1}^{n-1}u_{i,k}^+\geq -\sum\limits_{k=j+1}^{n-1}u_{i+1,k}^+\\
&\Leftrightarrow \lambda_i+\sum\limits_{k=j+1}^{n-1}u_{i+1,k}^+\geq \sum\limits_{k=j}^{n-1}u_{i,k}^+.
\end{array}$$
For $1\leq i\leq j< n-1$ we have
$$\begin{array}{lll}
\psi(u)_{i,j}^+\geq\psi(u)_{i,j+1}^+&\Leftrightarrow \lambda_{j+1}-u_{i,j}^+-\sum\limits_{k=i+1}^{j}u_{k,j}^+-\sum\limits_{k=j+1}^{n-1}u^+_{j+1,k}\geq -\sum\limits_{k=i+1}^{j+1}u_{k,j+1}^+-\sum\limits_{k=j+2}^{n-1}u^+_{j+2,k}\\
&\Leftrightarrow \lambda_{j+1}+\sum\limits_{k=i+1}^{j}u_{k,j+1}^++\sum\limits_{k=j+2}^{n-1}u^+_{j+2,k}\geq \sum\limits_{k=i}^{j}u_{k,j}^++\sum\limits_{k=j+2}^{n-1}u^+_{j+1,k}.
\end{array}$$
Moreover, we have 
$$\begin{array}{lll}
\psi(u)^+_{n-1,n-1}\geq 0&\Leftrightarrow \lambda_n-u_{n-1,n-1}^+\geq 0\Leftrightarrow\lambda_n\geq u_{n-1,n-1}^+.
\end{array}$$
From Theorem \ref{StringLusztig} we see that the inequalities (\ref{ineqpoly}) are equivalent to the condition $\varphi(t)_k\geq 0$ for $1\leq k\leq N.$
\end{proof}

\comment{
\begin{proof}
We use the bijections $\varphi$ and $\psi$ to translate the inequalities from the string polytope to the Lusztig polytope. For $\mathbf{i}=\mathbf{i}^{B_n}$ the ordering on the set of positive roots is the following: 
$$\begin{array}{ccc}
\varepsilon_{n-1}-\varepsilon_n,\varepsilon_{n-2}-\varepsilon_n,\varepsilon_{n-2}-\varepsilon_{n-1},\dots,\varepsilon_1-\varepsilon_n,\dots,\varepsilon_1-\varepsilon_2,\\
\varepsilon_1,\varepsilon_1+\varepsilon_2,\varepsilon_2,\varepsilon_1+\varepsilon_3,\varepsilon_2+\varepsilon_3,\varepsilon_3,\dots,\varepsilon_1+\varepsilon_n,\dots,\varepsilon_{n-1}+\varepsilon_n,\varepsilon_n.\end{array}$$
So for $1\leq i\leq j\leq n$ we get from Theorem \ref{LusztigString}:
$$\psi(u)^+_{i,j}=\sum_{k=i}^n\lambda_k+\sum_{k=j}^{n-1}\lambda_k-u^+_{i,j}-\sum_{k=i+1}^j u_{k,j}^+-\sum_{k=j+1}^{n}u_{i,k}^+-\sum_{k=j+1}^{n}u_{j,k}^+.$$
So we have for $1\leq i<j\leq n,$
$$\begin{array}{lll}
\psi(u)_{i,j}^+\geq\psi(u)_{i+1,j}^+&\Leftrightarrow \lambda_i-u_{i,j}^+-\sum\limits_{k=j+1}^{n}u_{i,k}^+\geq -\sum\limits_{k=j+1}^{n}u_{i+1,k}^+\\
&\Leftrightarrow \lambda_i+\sum\limits_{k=j+1}^{n}u_{i+1,k}^+\geq \sum\limits_{k=j}^{n}u_{i,k}^+.
\end{array}$$

For $1\leq i\leq j< n$ we have
$$\begin{array}{lll}
\psi(u)_{i,j}^+\geq\psi(u)_{i,j+1}^+&\Leftrightarrow \lambda_{j}-u_{i,j}^+-\sum\limits_{k=i+1}^{j}u_{k,j}^+-\sum\limits_{k=j+1}^{n}u^+_{j,k}\geq -\sum\limits_{k=i+1}^{j+1}u_{k,j+1}^+-\sum\limits_{k=j+2}^{n}u^+_{j+1,k}\\
&\Leftrightarrow \lambda_{j}+\sum\limits_{k=i+1}^{j+1}u_{k,j+1}^++\sum\limits_{k=j+2}^{n}u^+_{j+1,k}\geq \sum\limits_{k=i}^{j}u_{k,j}^++\sum\limits_{k=j+1}^{n}u^+_{j,k}.
\end{array}$$

Moreover, we have 
$$\begin{array}{lll}
\psi(u)^+_{n,n}\geq 0&\Leftrightarrow \lambda_n-u_{n,n}^+\geq 0\Leftrightarrow\lambda_n\geq u_{n,n}^+.
\end{array}$$
From Theorem \ref{StringLusztig} we see that the inequalities (\ref{ineqpoly}) are equivalent to the condition $\varphi(t)_k\geq 0$ for $1\leq k\leq N.$
\end{proof}
}

\comment{
\begin{proof}
We use the bijections $\varphi$ and $\psi$ to translate the inequalities from the string polytope to the Lusztig polytope. For $\mathbf{i}=\mathbf{i}^{C_n}$ the ordering on the set of positive roots is the following: 
$$\begin{array}{ccc}
\varepsilon_{n-1}-\varepsilon_n,\varepsilon_{n-2}-\varepsilon_n,\varepsilon_{n-2}-\varepsilon_{n-1},\dots,\varepsilon_1-\varepsilon_n,\dots,\varepsilon_1-\varepsilon_2,\\
2\varepsilon_1,\varepsilon_1+\varepsilon_2,2\varepsilon_2,\varepsilon_1+\varepsilon_3,\varepsilon_2+\varepsilon_3,2\varepsilon_3,\dots,\varepsilon_1+\varepsilon_n,\dots,\varepsilon_{n-1}+\varepsilon_n,2\varepsilon_n.\end{array}$$
So for $1\leq i< j\leq n$ we get from Theorem \ref{LusztigString}:
$$\psi(u)^+_{i,j}=\sum_{k=i}^n\lambda_k+\sum_{k=j}^{n}\lambda_k-u^+_{i,j}-\sum_{k=i+1}^j u_{k,j}^+-\sum_{k=j+1}^{n}u_{i,k}^+-\sum_{k=j}^{n}u_{j,k}^+.$$
For $1\leq i\leq n$ we get:
$$\psi(u)^+_{i,i}=\sum_{k=i}^n\lambda_k-u^+_{i,i}-\sum_{k=i+1}^{n}u_{i,k}^+.$$

So we have for $1\leq i<j-1< n,$
$$\begin{array}{lll}
\psi(u)_{i,j}^+\geq\psi(u)_{i+1,j}^+&\Leftrightarrow \lambda_i-u_{i,j}^+-\sum\limits_{k=j+1}^{n}u_{i,k}^+\geq -\sum\limits_{k=j+1}^{n}u_{i+1,k}^+\\
&\Leftrightarrow \lambda_i+\sum\limits_{k=j+1}^{n}u_{i+1,k}^+\geq \sum\limits_{k=j}^{n}u_{i,k}^+.
\end{array}$$
For $1\leq i< n$ we have
$$\begin{array}{lll}
\psi(u)_{i,i+1}^+\geq 2\psi(u)_{i+1,i+1}^+&\Leftrightarrow \lambda_i-u_{i,i+1}^+-\sum\limits_{k=i+2}^{n}u_{i,k}^+\geq -\sum\limits_{k=i+2}^n u_{i+1,k}^+\\
&\Leftrightarrow \lambda_i + \sum\limits_{k=i+2}^n u_{i+1,k}^+\geq \sum\limits_{k=i+1}^{n}u_{i,k}^+.
\end{array}$$

For $1\leq i< j< n$ we have
$$\begin{array}{lll}
\psi(u)_{i,j}^+\geq\psi(u)_{i,j+1}^+&\Leftrightarrow \lambda_{j}-u_{i,j}^+-\sum\limits_{k=i+1}^{j}u_{k,j}^+-\sum\limits_{k=j}^{n}u^+_{j,k}\geq -\sum\limits_{k=i+1}^{j+1}u_{k,j+1}^+-\sum\limits_{k=j+1}^{n}u^+_{j+1,k}\\
&\Leftrightarrow \lambda_{j}+\sum\limits_{k=i+1}^{j+1}u_{k,j+1}^++\sum\limits_{k=j+1}^{n}u^+_{j+1,k}\geq \sum\limits_{k=i}^{j}u_{k,j}^++\sum\limits_{k=j}^{n}u^+_{j,k}.
\end{array}$$
For $1\leq i< n$ we have
$$\begin{array}{lll}
2\psi(u)_{i,i}^+\geq\psi(u)_{i,i+1}^+&\Leftrightarrow \lambda_{i}-2u_{i,i}^+-\sum\limits_{k=i+1}^{n}u^+_{i,k}\geq -u_{i+1,i+1}^+-\sum\limits_{k=i+1}^{n}u^+_{i+1,k}\\
&\Leftrightarrow \lambda_{i}+u_{i+1,i+1}^++\sum\limits_{k=i+1}^{n}u^+_{i+1,k}\geq u_{i,i}^++\sum\limits_{k=i}^{n}u^+_{i,k}.
\end{array}$$

Moreover, we have 
$$\begin{array}{lll}
\psi(u)^+_{n,n}\geq 0&\Leftrightarrow \lambda_n-u_{n,n}^+\geq 0\Leftrightarrow\lambda_n\geq u_{n,n}^+.
\end{array}$$
From Theorem \ref{StringLusztig} we see that the inequalities (\ref{ineqpoly}) are equivalent to the condition $\varphi(t)_k\geq 0$ for $1\leq k\leq N.$
\end{proof}
}

We can of course also calculate the inequalities for the complete Lusztig polytopes:
\begin{thm}\label{thmLP}
\begin{enumerate}
    \item The Lusztig polytope $\mathcal{L}_{\mathbf{i}^{D_n}}(\lambda)$ is defined by the inequalities 
$$\begin{array}{ccc}
 \comment{\sum\limits_{k=j}^{n-1}u_{i,k}^-+\sum\limits_{k=i}^{n-1}u_{i,k}^++\sum\limits_{k=1}^{i-1}u_{k,i-1}^+\leq \lambda_i+\sum\limits_{k=j+1}^{n-1}u_{i+1,k}^-+\sum\limits_{k=i+1}^{n-1}u_{i+1,k}^++\sum\limits_{k=1}^{i}u_{k,i}^+\;\forall\; 1\leq i\leq j\leq n-1,\\}
  \sum\limits_{k=j}^{n-1}u_{i,k}^-+\sum\limits_{k=n-i}^{n-1}u_{n-i,k}^++\sum\limits_{k=1}^{n-i-1}u_{k,n-i-1}^+\leq \lambda_{n-i}+\sum\limits_{k=j+1}^{n-1}u_{i+1,k}^-+\sum\limits_{k=n-i+1}^{n-1}u_{n-i+1,k}^++\sum\limits_{k=1}^{n-i}u_{n-k,n-i}^+\\\forall\; 1\leq i\leq j\leq n-1,\\
      \sum\limits_{k=j}^{n-1}u_{i,k}^+\leq \sum\limits_{k=j+1}^{n-1}u_{i+1,k}^+ +\lambda_i\;\forall\; 1\leq i<j\leq n-1,\\
      \sum\limits_{k=i}^{j}u_{k,j}^+ +\sum\limits_{k=j+1}^{n-1}u_{j+1,k}^+\leq \sum\limits_{k=i+1}^{j+1}u_{k,j+1}^+ +\sum\limits_{k=j+2}^{n-1}u_{j+2,k}^+ +\lambda_{j+1}\;\forall\; 1\leq i\leq j< n-1,\\
      u_{n-1,n-1}^+\leq\lambda_n,\\
      u^{\pm}_{i,j}\geq 0\;\forall\; 1\leq i\leq j\leq n-1.
\end{array}$$
\item The Lusztig polytope $\mathcal{L}_{\mathbf{i}^{B_n}}(\lambda)$ is defined by the inequalities 
$$\begin{array}{ccc}
\comment{\sum\limits_{k=j}^{n-1}u_{i,k}^-+\sum\limits_{k=i}^{n}u_{i,k}^++\sum\limits_{k=1}^{i}u_{k,i}^+\leq \lambda_i+\sum\limits_{k=j+1}^{n-1}u_{i+1,k}^-+\sum\limits_{k=i+1}^{n}u_{i+1,k}^++\sum\limits_{k=1}^{i+1}u_{k,i+1}^+\;\forall\; 1\leq i\leq j\leq n-1,\\}
\sum\limits_{k=j}^{n-1}u_{i,k}^-+\sum\limits_{k=n-i+1}^{n}u_{n-i,k}^++\sum\limits_{k=1}^{n-i}u_{k,n-i}^+\leq \lambda_{n-i}+\sum\limits_{k=j+1}^{n-1}u_{i+1,k}^-+\sum\limits_{k=n-i+2}^{n}u_{n-i+1,k}^++\sum\limits_{k=1}^{n-i+1}u_{k,n-i+1}^+\\\forall\; 1\leq i\leq j\leq n-1,\\

 \sum\limits_{k=j}^{n}u_{i,k}^+\leq \sum\limits_{k=j+1}^{n}u_{i+1,k}^+ +\lambda_i\;\forall\; 1\leq i<j\leq n,\\
      \sum\limits_{k=i}^{j}u_{k,j}^+ +\sum\limits_{k=j+1}^{n}u_{j,k}^+\leq \sum\limits_{k=i+1}^{j+1}u_{k,j+1}^+ +\sum\limits_{k=j+2}^{n}u_{j+1,k}^+ +\lambda_{j}\;\forall 1\leq i\leq j< n,\\
      u_{n,n}^+\leq\lambda_n,\\
      u^{\pm}_{i,j}\geq 0\;\forall \;1\leq i\leq j\leq n.
\end{array}$$
\item The Lusztig polytope $\mathcal{L}_{\mathbf{i}^{C_n}}(\lambda)$ is defined by the inequalities 
$$\begin{array}{ccc}
\comment{\sum\limits_{k=j}^{n-1}u_{i,k}^-+\sum\limits_{k=i}^{n}u_{i,k}^++\sum\limits_{k=1}^{i}u_{k,i}^+\leq \lambda_i+\sum\limits_{k=j+1}^{n-1}u_{i+1,k}^-+\sum\limits_{k=i+1}^{n}u_{i+1,k}^++\sum\limits_{k=1}^{i+1}u_{k,i+1}^+\;\forall 1\leq i\leq j\leq n-1,\\}
\sum\limits_{k=j}^{n-1}u_{i,k}^-+\sum\limits_{k=n-i}^{n}u_{n-i,k}^++\sum\limits_{k=1}^{n-i}u_{k,n-i}^+\leq \lambda_{n-i}+\sum\limits_{k=j+1}^{n-1}u_{i+1,k}^-+\sum\limits_{k=n-i+1}^{n}u_{n-i+1,k}^++\sum\limits_{k=1}^{n-i+1}u_{k,n-i+1}^+\\\forall 1\leq i\leq j\leq n-1,\\
 \sum\limits_{k=j}^{n}u_{i,k}^+\leq \sum\limits_{k=j+1}^{n}u_{i+1,k}^+ +\lambda_i\;\forall\; 1\leq i< j\leq n,\\
      \sum\limits_{k=i}^{j}u_{k,j}^+ +\sum\limits_{k=j}^{n}u_{j,k}^+\leq \sum\limits_{k=i+1}^{j+1}u_{k,j+1}^+ +\sum\limits_{k=j+1}^{n}u_{j+1,k}^+ +\lambda_{j}\;\forall\; 1\leq i\leq j< n,\\
      u_{n,n}^+\leq\lambda_n,\\
      u^{\pm}_{i,j}\geq 0\;\forall\; 1\leq i\leq j\leq n.
\end{array}$$

\end{enumerate}
\end{thm}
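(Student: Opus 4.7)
I would prove Theorem \ref{thmLP} by the same mechanism used in the proof of Theorem \ref{thmbranchingD}, namely by pushing the explicit description of the string polytope $\mathcal{S}_{\mathbf{i}^{\mathfrak{g}}}(\lambda)$ through the bijection $\psi\colon\mathcal{L}_{\mathbf{i}}(\lambda)\to\mathcal{S}_{\mathbf{i}}(\lambda)$ of Theorem \ref{LusztigString}. The input for this translation is the complete list of defining inequalities of $\mathcal{S}_{\mathbf{i}^{\mathfrak{g}}}(\lambda)$: the cone inequalities of Theorems \ref{thmStringCone}, \ref{StringConeB} and \ref{StringConeC} together with the weight inequalities of Proposition \ref{Proppoly}. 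The key simplification is that under the identification $\psi\circ\varphi=\mathrm{id}$ of Theorems \ref{StringLusztig} and \ref{LusztigString}, the weight inequalities (\ref{ineqpoly}) are exactly the non-negativity conditions $\varphi(t)_k\geq 0$ and so become $u_k\geq 0$ in the Lusztig parametrization. Consequently every inequality in the Lusztig description must come either from a string-cone inequality or from the positivity of the $u$-coordinates.

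First I would fix the ordering on the positive roots coming from $\mathbf{i}^{\mathfrak{g}}=\mathbf{i}^{A_{n-1}}\mathbf{i}^{\tilde{\mathfrak{g}}}$. For $D_n$ this is the order recorded already inside the proof of Theorem \ref{thmbranchingD}, which identifies $\beta^-_{(i,j)}=\varepsilon_i-\varepsilon_{j+1}$ and $\beta^+_{(i,j)}=\varepsilon_i+\varepsilon_{j+1}$ for $1\leq i\leq j\leq n-1$; for $B_n$ and $C_n$ the extra short/long roots $\varepsilon_k$ or $2\varepsilon_k$ appear in the $t^+$-block in the positions where $\mathbf{i}^{\tilde{\mathfrak{g}}}$ sees $\alpha_n$. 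With this fixed, I would compute $\psi(u)^{\pm}_{(i,j)}$ by summing the pairings $d_{(i,j)^\pm,(a,b)^\pm}=\langle\beta_{(a,b)^\pm},\beta^{\vee}_{(i,j)^\pm}\rangle$, which take values in $\{-1,0,+1\}$ (and $\pm 2$ in the $C_n$ case for the long root).

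Second, I would translate the three families of string-cone inequalities one at a time. The pure $t^+$ inequalities $t^+_{i,j}\geq t^+_{i+1,j}$ and $t^+_{i,j}\geq t^+_{i,j+1}$ have already been converted inside the proof of Theorem \ref{thmbranchingD} and reproduce the second, third and fourth families of the Lusztig polytope inequalities in each of (1), (2), (3). The new work is the conversion of the $A_{n-1}$-inequalities $t^-_{i,j}\geq t^-_{i+1,j}$, which under $\psi$ produce the mixed inequalities of the first family in each case: the $u^-$-contribution is the standard $A_{n-1}$ translation, while the $u^+$-contribution is dictated by the pairings $\langle\varepsilon_a+\varepsilon_b,\varepsilon_i^{\vee}-\varepsilon_{j+1}^{\vee}\rangle$ (plus the analogous contributions from $\varepsilon_k$ or $2\varepsilon_k$ in the $B_n$/$C_n$ cases). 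A direct bookkeeping of which indices $(a,b)$ give a non-zero pairing recovers exactly the prescribed index ranges $k=n-i+1,\ldots,n$ and $k=1,\ldots,n-i$ appearing in Theorem \ref{thmLP}. Finally, the non-negativity $u^{\pm}_{i,j}\geq 0$ comes out of the weight inequalities as explained above.

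The main obstacle, as in the proof of Theorem \ref{thmbranchingD}, will be the careful bookkeeping of the cross-pairings between $A_{n-1}$-type coroots $\varepsilon_i^{\vee}-\varepsilon_{j+1}^{\vee}$ and the $\tilde{\mathfrak{g}}$-type roots; the subtlety is that in $C_n$ the long roots $2\varepsilon_k$ contribute doubled coefficients, which shifts the lower summation index from $k=n-i+1$ (for $B_n$) to $k=n-i$ (for $C_n$) in the first family of inequalities. Once these pairings are organized into two tables (one for the $A_{n-1}$/$\tilde{\mathfrak{g}}$ cross block, one within $\tilde{\mathfrak{g}}$), the remaining work is a routine sum manipulation of exactly the kind already carried out for Theorem \ref{thmbranchingD}; the cases (2) and (3) then differ from (1) only by the presence of the extra diagonal root in the $\tilde{\mathfrak{g}}$-block and, in the $C_n$ case, by a uniform doubling on the long-root positions.
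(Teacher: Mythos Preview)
Your plan is correct and matches the paper's approach exactly: translate the full list of string-polytope inequalities (the cone inequalities of Theorems \ref{thmStringCone}, \ref{StringConeB}, \ref{StringConeC} plus the weight inequalities of Proposition \ref{Proppoly}) through the map $\psi$ of Theorem \ref{LusztigString}, reusing the $t^+$-computations from the proof of Theorem \ref{thmbranchingD} and adding only the conversion of the $A_{n-1}$-inequalities $t^-_{i,j}\geq t^-_{i+1,j}$ via explicit formulas for $\psi(u)^-_{i,j}$. The paper in fact omits the proof of Theorem \ref{thmLP} entirely (the computations are present only as commented-out material), but that suppressed argument is precisely the one you describe, including the observation that the weight inequalities (\ref{ineqpoly}) become $u_k\geq 0$ via $\varphi$.
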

\comment{
\begin{proof}
We just need to calculate the additional Lusztig inequalities obtained from the $A_{n-1}$ string inequalities.
For $1\leq i\leq j\leq n-1$ we get from Theorem \ref{LusztigString}:
$$\psi(u)^-_{i,j}=\sum_{k=i}^{j}\lambda_k-\sum_{k=i}^{j}u_{k,j}^-+\sum_{k=j+1}^{n-1}u_{j+1,k}^--\sum_{k=j+1}^{n-1}u_{i,k}^-+\sum_{k=1}^j u_{k,j}^++\sum_{k=j+1}^{n-1} u_{j+1,k}^+-\sum_{k=i}^{n-1}u_{i,k}^+-\sum_{k=1}^{i-1}u_{k,i-1}^+.$$
So we have for $1\leq i<j\leq n-1$:
$$\begin{array}{ccc}
&\psi(u)_{i,j}^-\geq\phi(u)_{i+1,j}^-\\
\Leftrightarrow&\lambda_i-u_{i,j}^--\sum\limits_{k=j+1}^{n-1}u_{i,k}^--\sum\limits_{k=i+1}^{n-1}u_{i,k}^+-\sum\limits_{k=1}^{i-1}u_{k,i-1}^+\geq -\sum\limits_{k=j+1}^{n-1}u_{i+1,k}^--\sum\limits_{k=i+2}^{n-1}u_{i+1,k}^+-\sum\limits_{k=1}^{i}u_{k,i}^+\\
\Leftrightarrow&\lambda_i+\sum\limits_{k=j+1}^{n-1}u_{i+1,k}^-+\sum\limits_{k=i+2}^{n-1}u_{i+1,k}^++\sum\limits_{k=1}^{i}u_{k,i}^+\geq \sum\limits_{k=j}^{n-1}u_{i,k}^-+\sum\limits_{k=i+1}^{n-1}u_{i,k}^++\sum\limits_{k=1}^{i-1}u_{k,i-1}^+
\end{array}$$
For $1\leq i\leq n-1$ we have
$$\begin{array}{ccc}
&\psi(u)_{i,i}^-\geq 0\\
\Leftrightarrow& \lambda_i-z^-_{i,i}+\sum\limits_{k=i+1}^{n-1}u_{i+1,k}^--\sum\limits_{k=i+1}^{n-1}u_{i,k}^-+\sum\limits_{k=1}^i u_{k,i}^++\sum\limits_{k=i+1}^{n-1} u_{i+1,k}^+-\sum\limits_{k=i}^{n-1}u_{i,k}^+-\sum\limits_{k=1}^{i-1}u_{k,i-1}^+\geq 0\\
\Leftrightarrow& \lambda_i+\sum\limits_{k=i+1}^{n-1}u_{i+1,k}^-+\sum\limits_{k=1}^i u_{k,i}^++\sum\limits_{k=i+1}^{n-1} u_{i+1,k}^+\geq \sum\limits_{k=i}^{n-1}u_{i,k}^-+\sum\limits_{k=i}^{n-1}u_{i,k}^++\sum\limits_{k=1}^{i-1}u_{k,i-1}^+
\end{array}$$
\end{proof}
}

\comment{
\begin{proof}
We just need to calculate the additional Lusztig inequalities obtained from the $A_{n-1}$ string inequalities.
For $1\leq i\leq j\leq n-1$ we get from Theorem \ref{LusztigString}:
$$\psi(u)^-_{i,j}=\sum_{k=i}^{j}\lambda_k-\sum_{k=i}^{j}u_{k,j}^-+\sum_{k=j+1}^{n-1}u_{j+1,k}^--\sum_{k=j+1}^{n-1}u_{i,k}^-+\sum_{k=1}^{j+1} u_{k,j+1}^++\sum_{k=j+2}^{n} u_{j+1,k}^+-\sum_{k=i+1}^{n}u_{i,k}^+-\sum_{k=1}^{i}u_{k,i}^+.$$
So we have for $1\leq i<j\leq n-1$:
$$\begin{array}{ccc}
&\psi(u)_{i,j}^-\geq\phi(u)_{i+1,j}^-\\\Leftrightarrow&\lambda_i-u_{i,j}^--\sum\limits_{k=j+1}^{n-1}u_{i,k}^--\sum\limits_{k=i+1}^{n}u_{i,k}^+-\sum\limits_{k=1}^{i}u_{k,i}^+\geq -\sum\limits_{k=j+1}^{n}u_{i+1,k}^--\sum\limits_{k=i+2}^{n-1}u_{i+1,k}^+-\sum\limits_{k=1}^{i+1}u_{k,i+1}^+\\
\Leftrightarrow&\lambda_i+\sum\limits_{k=j+1}^{n-1}u_{i+1,k}^-+\sum\limits_{k=i+2}^{n}u_{i+1,k}^++\sum\limits_{k=1}^{i+1}u_{k,i+1}^+\geq \sum\limits_{k=j}^{n-1}u_{i,k}^-+\sum\limits_{k=i+1}^{n}u_{i,k}^++\sum\limits_{k=1}^{i}u_{k,i}^+
\end{array}$$
For $1\leq i\leq n-1$ we have
$$\begin{array}{ccc}
&\psi(u)_{i,i}^-\geq 0\\
\Leftrightarrow& \lambda_i-u^-_{i,i}+\sum\limits_{k=i+1}^{n-1}u_{i+1,k}^--\sum\limits_{k=i+1}^{n-1}u_{i,k}^-+\sum\limits_{k=1}^i u_{k,i+1}^++\sum\limits_{k=i+1}^{n} u_{i+1,k}^+-\sum\limits_{k=i+1}^{n}u_{i,k}^+-\sum\limits_{k=1}^{i}u_{k,i}^+\geq 0\\
\Leftrightarrow& \lambda_i+\sum\limits_{k=i+1}^{n-1}u_{i+1,k}^-+\sum\limits_{k=1}^i u_{k,i+1}^++\sum\limits_{k=i+1}^{n} u_{i+1,k}^+\geq \sum\limits_{k=i}^{n-1}u_{i,k}^-+\sum\limits_{k=i}^{n}u_{i,k}^++\sum\limits_{k=1}^{i}u_{k,i}^+
\end{array}$$
\end{proof}
}
\comment{
\begin{proof}
We just need to calculate the additional Lusztig inequalities obtained from the $A_{n-1}$ string inequalities.
For $1\leq i\leq j\leq n-1$ we get from Theorem \ref{LusztigString}:
$$\psi(u)^-_{i,j}=\sum_{k=i}^{j}\lambda_k-\sum_{k=i}^{j}u_{k,j}^-+\sum_{k=j+1}^{n-1}u_{j+1,k}^--\sum_{k=j+1}^{n-1}u_{i,k}^-+\sum_{k=1}^{j+1} u_{k,j+1}^++\sum_{k=j+1}^{n} u_{j+1,k}^+-\sum_{k=i}^{n}u_{i,k}^+-\sum_{k=1}^{i}u_{k,i}^+.$$
So we have for $1\leq i<j\leq n-1$:
$$\begin{array}{ccc}
&\psi(u)_{i,j}^-\geq\phi(u)_{i+1,j}^-\\\Leftrightarrow&\lambda_i-u_{i,j}^--\sum\limits_{k=j+1}^{n-1}u_{i,k}^--\sum\limits_{k=i}^{n}u_{i,k}^+-\sum\limits_{k=1}^{i}u_{k,i}^+\geq -\sum\limits_{k=j+1}^{n-1}u_{i+1,k}^--\sum\limits_{k=i+1}^{n}u_{i+1,k}^+-\sum\limits_{k=1}^{i+1}u_{k,i+1}^+\\
\Leftrightarrow&\lambda_i+\sum\limits_{k=j+1}^{n-1}u_{i+1,k}^-+\sum\limits_{k=i+1}^{n}u_{i+1,k}^++\sum\limits_{k=1}^{i+1}u_{k,i+1}^+\geq \sum\limits_{k=j}^{n-1}u_{i,k}^-+\sum\limits_{k=i}^{n}u_{i,k}^++\sum\limits_{k=1}^{i}u_{k,i}^+
\end{array}$$
For $1\leq i\leq n-1$ we have
$$\begin{array}{ccc}
&\psi(u)_{i,i}^-\geq 0\\
\Leftrightarrow& \lambda_i-u^-_{i,i}+\sum\limits_{k=i+1}^{n-1}u_{i+1,k}^--\sum\limits_{k=i+1}^{n-1}u_{i,k}^-+\sum\limits_{k=1}^{i+1} u_{k,i+1}^++\sum\limits_{k=i+1}^{n} u_{i+1,k}^+-\sum\limits_{k=i}^{n}u_{i,k}^+-\sum\limits_{k=1}^{i}u_{k,i}^+\geq 0\\
\Leftrightarrow& \lambda_i+\sum\limits_{k=i+1}^{n-1}u_{i+1,k}^-+\sum\limits_{k=1}^{i+1} u_{k,i+1}^++\sum\limits_{k=i+1}^{n} u_{i+1,k}^+\geq \sum\limits_{k=i}^{n-1}u_{i,k}^-+\sum\limits_{k=i}^{n}u_{i,k}^++\sum\limits_{k=1}^{i}u_{k,i}^+
\end{array}$$
\end{proof}
}
\newpage
\section{Appendix}
We will here repeat the proofs of Lemmas \ref{LemIneqD} and \ref{LemineqB} and Theorems \ref{thmStringCone} and \ref{StringConeB} with all details which we omitted in the main part of this paper.
\subsection{\texorpdfstring{$D_n$}{Dn}}
\begin{proof} of Lemma \ref{LemIneqD}:
We use Theorem \ref{BZcone}:
We know that $$\omega_0=\begin{pmatrix}1&2&\dots&n-1&n\\
-1&-2&\dots&-(n-1)&\mp n
\end{pmatrix}.$$
So for $1\leq i<n-1$ we get $$s_i\omega_0=\begin{pmatrix}1&2&\dots&i-1&i&i+1&i+2&\dots&n-1&n\\
-1&-2&\dots&-(i-1)&-(i+1)&-i&-(i+2)&\dots&-(n-1)&\mp n
\end{pmatrix}.$$
Now acting by the parabolic subgroup $W_{\hat{i}}$ gives us the minimal representative of $W_{\hat{i}}s_i\omega_0$ :$$z^{(i)}=\begin{pmatrix}1&2&\dots&i-1&i&i+1&i+2&\dots&n-1&n\\
-i&-(i-1)&\dots&-2&i+1&-1&i+2&\dots&n-1&\pm n
\end{pmatrix}.$$
Here, the choice of the sign depends on the parity of $i$ (in the same manner as described above for $n$).\par
We now introduce the notation $$\overrightarrow{i,j}=i,i+1,\dots,j-1,j$$ for $i<j$.\par
A word for $z^{(i)}$ is given by $$\mathbf{i}^{(i)}=\mathbf{i}^{A_{n-1},(i)}\mathbf{i}_{1}^{\tilde{D_n},(i)}\mathbf{i}_{2}^{\tilde{D_n},(i)}$$ where
$$
\begin{array}{lll}
\mathbf{i}^{A_{n-1},(i)}&=(\overrightarrow{i,n-1},\overrightarrow{i-1,n-2},\dots,\overrightarrow{1,n-i}),\\[5pt]
\mathbf{i}_{1}^{\tilde{D_n},(i)}&=(n,n-2,n-1,n-3,n-2,n,\dots \overrightarrow{n-i,n-2},n-1/n),\\[5pt]
\mathbf{i}_{2}^{\tilde{D_n},(i)}&=(\overrightarrow{n-i-1,n-2},\dots,\overrightarrow{2,i+1},\overrightarrow{1,i-1}).    
\end{array}
$$
Here, the choice of $n-1/n$ depends on the parity of $i$.
The corresponding permutations are
$$
\begin{array}{lll}
z^{A_{n-1},(i)}&=\left(\begin{array}{lllllllll}
1&2&\dots&n-i&n-i+1&n-i+2&\dots&n-1&n\\
i+1&i+2&\dots&n&1&2&\dots&i-1&i
\end{array}\right),\\\\
z^{\tilde{D_n},(i
)}_1&=\left(\begin{array}{lllllllll}
1&2&\dots&n-i-1&n-i&n-i+1&\dots&n-1&n\\
1&2&\dots&n-i-1&-n&-(n-1)&\dots&-(n-i+1)&\pm(n-i)
\end{array}\right),\\\\
z^{\tilde{D_n},(i
)}_2&=\left(\begin{array}{llllllllll}
1&2&\dots&i-1&i&i+1&i+2&\dots&n-1&n\\
n-i&n-i+1&\dots&n-2&1&n-1&2&\dots&n-i-1&n
\end{array}\right),\\\\
z^{\tilde{D_n},(i
)}&=z^{\tilde{D_n},(i
)}_1z^{\tilde{D_n},(i
)}_2\\&=\left(\begin{array}{llllllllll}
1&2&\dots&i-1&i&i+1&i+2&\dots&n-1&n\\
-n&-(n-1)&\dots&-(n-i+1)&1&-(n-i+1)&2&\dots&n-i-1&\pm(n-i)
\end{array}\right).
\end{array}
$$

We can write $\mathbf{i}^{A_{n-1},(i)}$ as a subword of $\mathbf{i}^{A_{n-1}}$ in the following way (the underlined parts form the subword):
$$(n-1,n-2,n-1,\overrightarrow{n-3,n-1},\dots,\overrightarrow{\underline{i,n-1}},\overrightarrow{\underline{i-1,n-2}},n-1,\dots,\overrightarrow{\underline{1,n-i}},\overrightarrow{n-i+1,n-1}).$$
This means, starting from the $(n-i)$-th block, we take the first $n-i$ elements of each block.
We make the observation, that due to commutation relations for $i< m\leq n-1$ also the following words are words for $z^{\tilde{D_n},(i)}_2$:

$$
\begin{array}{lll}
\mathbf{i}_{2}^{\tilde{D_n},(i),m}=&(\overrightarrow{n-i-1,n-2},\overrightarrow{n-i-2,n-3},\dots,\overrightarrow{n-m+1,n-m+i},\overrightarrow{n-m,n-m+i-2},\\[5pt]
&\overrightarrow{n-m-1,n-m+i-3},n-m+i-1,\overrightarrow{n-m-2,n-m+i-4},n-m+i-2,\\[5pt]
&\dots,\overrightarrow{2,i},i+2,\overrightarrow{1,i-1},i+1).    
\end{array}
$$
We can write $\mathbf{i}^{\tilde{D_n},(i),m}$ as a subword of $\mathbf{i}^{\tilde{D_n}}$ in the following way:
$$
\begin{array}{lll}
&(\underline{n},\underline{n-2,n-1},\underline{n-3,n-2,n},\dots \underline{\overrightarrow{n-i,n-2},n-1/n}\underline{\overrightarrow{n-i-1,n-2}},n/n-1,\\[5pt]
&\underline{\overrightarrow{n-i-2,n-3}},n-2,n-1/n,\dots,\underline{\overrightarrow{n-m+1,n-m+i}},\overrightarrow{n-m+i+1,n-2},n/n-1,\\[5pt]
&\underline{\overrightarrow{n-m,n-m+i-2}},\overrightarrow{n-m+i-1,n-2},n/n-1,\underline{\overrightarrow{n-m-1,n-m+i-3}},\\[5pt]
&n-m+i-2,\underline{n-m+i-1},\overrightarrow{n-m+i,n-2},n/n-1,\underline{\overrightarrow{n-m-2,n-m+i-4}},\\[5pt]
&n-m+i-3,\underline{n-m+i-2},\overrightarrow{n-m+i-1,n-2},n/n-1,\\[5pt]
&\dots,\underline{\overrightarrow{2,i}},i+1,\underline{i+2},\overrightarrow{i+3,n-2},n/n-1,\underline{\overrightarrow{1,i-1}},i,\underline{i+1},\overrightarrow{i+2,n-2},n/n-1).
\end{array}
$$
This means, we take the first $i$ elements of the first $m-1$ blocks (or the whole block, if its length is less than $i$), the first $i-1$ elements from the remaining blocks and also the $(i+1)$-th element of all blocks right from the $m$-th block.\par
We now start calculating our inequalities. We just need to consider variables corresponding to entries of $\mathbf{i}^{D_n}$, which are not in the subword.\par
For the variables until the first entry of the subword, we get as coefficient
$\alpha_{i_k}w_i^{\vee}=(\epsilon_{i_k}-\epsilon_{i_{k}+1})w_i^{\vee}=0-0=0,$ 
as $i_k>i$.
We use our double indication now. In this notation we have $(\mathbf{i}^{A_{n-1}})_{k,l}=n-l-1+k$.
\par
For $i<k\leq l<n$ we get
$$
\begin{array}{lll}
&(\overrightarrow{s_i,s_{n-1}}\dots\overrightarrow{s_{n-l},s_{2n-l-i-1}}\alpha_{n-l-1+k})(w_i^{\vee})\\[5pt]
=&(\overleftarrow{s_i,s_{n-1}}\dots\overleftarrow{s_{n-l},s_{2n-l-i-1}}(\epsilon_{n-l-1+k}-\epsilon_{n-l+k}))(w_i^{\vee})\\[5pt]
=&(\epsilon_{i-l-1+k}-\epsilon_{i-l+k})(w_i^{\vee})=1-1=0,
 \end{array}
 $$

as $i-l+k\leq i$.\\\par
So all coefficients for variables corresponding to $\mathbf{i}^{A_{n-1}}$ are zero.
We now consider the variables corresponding to $\mathbf{i}^{\tilde{D_n}}$. Here we have $(\mathbf{i}^{\tilde{D_n}})_{k,l}=n-l+k-1$. For $l$ odd, we have $(\mathbf{i}^{\tilde{D_n}})_{l,l}=n$ instead of $n-1$.
The entries of the first $i$ blocks are all in the subword, so we just need to compute the coefficients for $l>i$. For $i<k\leq l<m$ we get:
$$
    \begin{array}{lll}
    &(s_{z^{A_{n-1},(i)}}s_{z^{\tilde{D_n},(i)}_1}\overrightarrow{s_{n-i-1},s_{n-2}}\dots\overrightarrow{s_{n-l},s_{n-l+i-1}}\alpha_{n-l+k-1})(w_i^{\vee})\\[5pt]
    =&(s_{z^{A_{n-1},(i)}}s_{z^{\tilde{D_n},(i)}_1}\overrightarrow{s_{n-i-1},s_{n-2}}\dots\overrightarrow{s_{n-l},s_{n-l+i-1}}(\epsilon_{n-l+k-1}-\epsilon_{n-l+k}))(w_i^{\vee})\\[5pt]
    =&(s_{z^{A_{n-1},(i)}}s_{z^{\tilde{D_n},(i)}_1}(\epsilon_{n-l+k-1-i}-\epsilon_{n-l+k-i}))(w_i^{\vee})\\[5pt]
    =&(s_{z^{A_{n-1},(i)}}(\epsilon_{n-l+k-1-i}-\epsilon_{n-l+k-i}))(w_i^{\vee}) =(\epsilon_{n-l+k-1}-\epsilon_{n-l+k})(w_i^{\vee})=0-0=0
    \end{array}
$$
as $n-l+k-1>i$. \\\par
For $k=l$ if $(\mathbf{i}^{\tilde{D_n}})_{k,l}=n$ we need to change the sign between the two $\epsilon$ but as both summands are $0$ this does not change the result.\par
For $l=m$ and $k=i$ we get 
$$
    \begin{array}{lll}
    &(s_{z^{A_{n-1},(i)}}s_{z^{\tilde{D_n},(i)}_1}\overrightarrow{s_{n-i-1},s_{n-2}}\dots\overrightarrow{s_{n-m+1},s_{n-m+i}}\overrightarrow{s_{n-m},s_{n-m+i-2}}\alpha_{n-m+i-1})(w_i^{\vee})\\[5pt]
    =&(s_{z^{A_{n-1},(i)}}s_{z^{\tilde{D_n},(i)}_1}\overrightarrow{s_{n-i-1},s_{n-2}}\dots\overrightarrow{s_{n-m+1},s_{n-m+i}}\overrightarrow{s_{n-m},s_{n-m+i-2}}(\epsilon_{n-m+i-1}-\epsilon_{n-m+i}))(w_i^{\vee})\\[5pt]
    =&(s_{z^{A_{n-1},(i)}}s_{z^{\tilde{D_n},(i)}_1}(\epsilon_{n-m}-\epsilon_{n-1}))(w_i^{\vee}) =(s_{z^{A_{n-1},(i)}}(\epsilon_{n-m}+\epsilon_{n-i+1}))(w_i^{\vee})\\[5pt]
    =&(\epsilon_{n-m+i}+\epsilon_{1})(w_i^{\vee})=0+1=1,
    \end{array}
$$
as $n-m+i>i$.\par
As $i<m$, the case $k=l$ cannot occur here.\par
For $l=m$ and $k=i+1$ we get 
$$
    \begin{array}{lll}
    &(s_{z^{A_{n-1},(i)}}s_{z^{\tilde{D_n},(i)}_1}\overrightarrow{s_{n-i-1},s_{n-2}}\dots\overrightarrow{s_{n-m+1},s_{n-m+i}}\overrightarrow{s_{n-m},s_{n-m+i-2}}\alpha_{n-m+i})(w_i^{\vee})\\[5pt]
    =&(s_{z^{A_{n-1},(i)}}s_{z^{\tilde{D_n},(i)}_1}\overrightarrow{s_{n-i-1},s_{n-2}}\dots\overrightarrow{s_{n-m+1},s_{n-m+i}}\overrightarrow{s_{n-m},s_{n-m+i-2}}(\epsilon_{n-m+i}-\epsilon_{n-m+i+1}))(w_i^{\vee})\\[5pt]
    =&(s_{z^{A_{n-1},(i)}}s_{z^{\tilde{D_n},(i)}_1}(\epsilon_{n-1}-\epsilon_{n-m+1}))(w_i^{\vee})=(s_{z^{A_{n-1},(i)}}(-\epsilon_{n-i+1}-\epsilon_{n-m+1}))(w_i^{\vee})\\[5pt]
    =&(-\epsilon_{1}-\epsilon_{n-m+1+i})(w_i^{\vee}) =-1+0=-1,
    \end{array}
$$
as $n-m+1+>i$.\par
For $k=l=m$ if $(\mathbf{i}^{\tilde{D_n}})_{k,l}=n$ we need to change the sign between the two $\epsilon$ but as the second summand is $0$ this does not change the result.\par
For $l=m$ and $i+1<k\leq m$ we get 
$$
    \begin{array}{lll}
    &(s_{z^{A_{n-1},(i)}}s_{z^{\tilde{D_n},(i)}_1}\overrightarrow{s_{n-i-1},s_{n-2}}\dots\overrightarrow{s_{n-m+1},s_{n-m+i}}\overrightarrow{s_{n-m},s_{n-m+i-2}}\alpha_{n-m+k-1})(w_i^{\vee})\\[5pt]
    =&(s_{z^{A_{n-1},(i)}}s_{z^{\tilde{D_n},(i)}_1}\overrightarrow{s_{n-i-1},s_{n-2}}\dots\overrightarrow{s_{n-m+1},s_{n-m+i}}\overrightarrow{s_{n-m},s_{n-m+i-2}}(\epsilon_{n-m+k-1}-\epsilon_{n-m+k}))(w_i^{\vee})\\[5pt]
    =&(s_{z^{A_{n-1},(i)}}s_{z^{\tilde{D_n},(i)}_1}(\epsilon_{n-m+k-i-1}-\epsilon_{n-m+k-i}))(w_i^{\vee})=(s_{z^{A_{n-1},(i)}}(\epsilon_{n-m+k-i-1}-\epsilon_{n-m+k-i}))(w_i^{\vee})\\[5pt]
    =&(\epsilon_{n-m+k-1}-\epsilon_{n-m+k})(w_i^{\vee})=0+0=0,
    \end{array}
$$
as $n-m+k-1>i$. \\\par
For $k=l=m$ if $(\mathbf{i}^{\tilde{D_n}})_{k,l}=n$ we need to change the sign between the two $\epsilon$ but as both summands are $0$ this does not change the result.\par
For $k=i$ and $m<l<n$ we get 
$$
    \begin{array}{lll}
    &(s_{z^{A_{n-1},(i)}}s_{z^{\tilde{D_n},(i)}_1}\overrightarrow{s_{n-i-1},s_{n-2}}\dots\overrightarrow{s_{n-m+1},s_{n-m+i}}\overrightarrow{s_{n-m},s_{n-m+i-2}}\\[5pt]
    &\overrightarrow{s_{n-m-1},s_{n-m+i-3}},s_{n-m+i-1}\dots\overrightarrow{s_{n-l},s_{n-l+i-2}}\alpha_{n-l+i-1})(w_i^{\vee})\\[5pt]
    =&(s_{z^{A_{n-1},(i)}}s_{z^{\tilde{D_n},(i)}_1}\overrightarrow{s_{n-i-1},s_{n-2}}\dots\overrightarrow{s_{n-m+1},s_{n-m+i}}\overrightarrow{s_{n-m},s_{n-m+i-2}}\\[5pt]
    &\overrightarrow{s_{n-m-1},s_{n-m+i-3}},s_{n-m+i-1}\dots\overrightarrow{s_{n-l},s_{n-l+i-2}}(\epsilon_{n-l+i-1}-\epsilon_{n-l+i}))(w_i^{\vee})\\[5pt]
    =&(s_{z^{A_{n-1},(i)}}s_{z^{\tilde{D_n},(i)}_1}(\epsilon_{n-l}-\epsilon_{n-l+1}))(w_i^{\vee})\\[5pt]
    =&(s_{z^{A_{n-1},(i)}}(\epsilon_{n-l}-\epsilon_{n-l+1}))(w_i^{\vee})\\[5pt]
    =&(\epsilon_{n-l+i}-\epsilon_{n-l+1+i})(w_i^{\vee})=0-0=0,
    \end{array}
$$
as $n-l+i>i$.\par For $k=l$ if $(\mathbf{i}^{\tilde{D_n}})_{k,l}=n$ we need to change the sign between the two $\epsilon$ but as both summands are $0$ this does not change the result.\par

For  $m< l<n$ and $i+2\leq k\leq l$ we get 
$$
    \begin{array}{lll}
    &(s_{z^{A_{n-1},(i)}}s_{z^{\tilde{D_n},(i)}_1}\overrightarrow{s_{n-i-1},s_{n-2}}\dots\overrightarrow{s_{n-m+1},s_{n-m+i}}\overrightarrow{s_{n-m},s_{n-m+i-2}}\\[5pt]
    &\overrightarrow{s_{n-m-1},s_{n-m+i-3}},s_{n-m+i-1}\dots\overrightarrow{s_{n-l},s_{n-l+i-2}},s_{n-l+i}\alpha_{n-l+k-1})(w_i^{\vee})\\[5pt]
    =&(s_{z^{A_{n-1},(i)}}s_{z^{\tilde{D_n},(i)}_1}\overrightarrow{s_{n-i-1},s_{n-2}}\dots\overrightarrow{s_{n-m+1},s_{n-m+i}}\overrightarrow{s_{n-m},s_{n-m+i-2}}\\[5pt]
    &\overrightarrow{s_{n-m-1},s_{n-m+i-3}},s_{n-m+i-1}\dots\overrightarrow{s_{n-l},s_{n-l+i-2}},s_{n-l+i}(\epsilon_{n-l+k-1}-\epsilon_{n-l+k}))(w_i^{\vee})\\[5pt]
    =&(s_{z^{A_{n-1},(i)}}s_{z^{\tilde{D_n},(i)}_1}(\epsilon_{n-l+k-1-i}-\epsilon_{n-l+k-i}))(w_i^{\vee})\\[5pt]
    =&(s_{z^{A_{n-1},(i)}}(\epsilon_{n-l+k-1-i}-\epsilon_{n-l+k-i}))(w_i^{\vee})\\[5pt]
    =&(\epsilon_{n-l+k-1}-\epsilon_{n-l+k})(w_i^{\vee})=0-0=0,
    \end{array}
$$
as $n-l+k-1>i$. \\\par
For $k=l$ if $(\mathbf{i}^{\tilde{D_n}})_{k,l}=n$ we need to change the sign between the two $\epsilon$ but as both summands are $0$ this does not change the result.\par
So only two coefficients are not zero, which gives us the inequality $t^+_{i,m}\geq t^ +_{i+1,m}$ for $1\leq i< m\leq n-1$.\par
Next, we consider the inequalities which we obtain from $s_{n-1}$. We have
$$s_{n-1}\omega_0=\left(\begin{array}{llllll}1&2&\dots&n-2&n-1&n\\
-1&-2&\dots&-(n-2)&-n&\mp(n-1)
\end{array}\right).$$
Now acting by the parabolic subgroup $W_{s_{\tilde{n-1}}}$ gives us the minimal representative of $W_{s_{\tilde{n-1}}}s_{n-1}\omega_0$:
$$z^{(n-1)}=\left(\begin{array}{lllllll}1&2&3&\dots&n-2&n-1&n\\
n&-(n-1)&-(n-2)&\dots&-3& 1&\mp 2
\end{array}\right).$$
A word for $z^{(n-1)}$ is given by 
$$\mathbf{i}^{(n-1)}=\mathbf{i}^{A_{n-1},(n-1)}\mathbf{i}_{1}^{\tilde{D_n},(n-1)}\mathbf{i}_{2}^{\tilde{D_n},(n-1)}$$
where
$$
\begin{array}{lll}
\mathbf{i}^{A_{n-1},(n-1)}&=(\overleftarrow{n-1,1})\\[5pt]
\mathbf{i}_{1}^{\tilde{D_n},(i)}&=(n,n-2,n-1,n-3,n-2,n,\dots \overrightarrow{3,n-2},n/n-1),\\[5pt]
\mathbf{i}_{2}^{\tilde{D_n},(i)}&=({\overrightarrow{2,n-2}}).    
\end{array}
$$
The corresponding permutations are
$$
\begin{array}{lll}
z^{A_{n-1},(n-1
)}&=\left(\begin{array}{llllll}
1&2&3&\dots&n-1&n\\
n&1&2&\dots&n-2&n-1
\end{array}\right),\\\\
z^{\tilde{D_n},(n-1
)}_1&=\left(\begin{array}{lllllll}
1&2&3&4&\dots&n-1&n\\
1&2&-n&-(n-1)&\dots&-4&\mp 3
\end{array}\right),\\\\
z^{\tilde{D_n},(n-1
)}_2&=\left(\begin{array}{lllllll}
1&2&3&\dots&n-2&n-1&n\\
1&3&4&\dots&n-1&2&n
\end{array}\right),\\\\
z^{\tilde{D_n},(n-1
)}&=z^{\tilde{D_n},(n-1)}_1z^{\tilde{D_n},(n-1)}_2\\\\
&=\left(\begin{array}{lllllll}
1&2&3&\dots&n-2&n-1&n\\
1&-n&-(n-1)&\dots&-4&2&\mp 3
\end{array}\right).
\end{array}
$$
We can write $\mathbf{i}^{A_{n-1},(n-1)}$ as a subword of $\mathbf{i}^{A_{n-1}}$ be choosing the first entry from each block of $\mathbf{i}^{A_{n-1},(n-1)}$.\par
Just as above, for $\mathbf{i}^{A_{n-1},(i)}$, we see that all coefficients for the variables with upper index $-$ are zero.\par
We can write $\mathbf{i}_{1}^{\tilde{D_n},(n-1)}$ as a subword of $\mathbf{i}^{\tilde{D_n}}$ by choosing the first $n-3$ blocks of $\mathbf{i}^{\tilde{D_n}}$.\par
Now we have $n-2$ possibilities to write $\mathbf{i}_{2}^{\tilde{D_n},(n-1)}$ as a subword of the two remaining blocks of $\mathbf{i}^{\tilde{D_n}}$: For $1\leq m\leq n-2$ we choose the first $m-1$ entries of the second last block and the entries $m+1$ to $n-2$ of the last block:
$$(\underline{\overrightarrow{2,m}},\overrightarrow{m+1,n-2},n-1/n,\overrightarrow{1,m+1},\underline{\overrightarrow{m+2,n-2}},n/n-1).$$
As all the previous entries are in the subword, we only need to compute the coefficients for some variables of the last two blocks. We fix $m$ and start with the first variable which is not in the subword: $t^+_{m,n-2}$. For $m<n-2$ we obtain:
$$
    \begin{array}{lll}
    &(s_{z^{A_{n-1},(n-1)}}s_{z^{\tilde{D_n},(n-1)}_1}\overrightarrow{s_{2},s_{m}}\alpha_{m+1})(w^{\vee}_{n-1})\\[5pt]
    =&(s_{z^{A_{n-1},(n-1)}}s_{z^{\tilde{D_n},(n-1)}_1}\overrightarrow{s_{2},s_{m}}(\epsilon_{m+1}-\epsilon_{m+2}))(w^{\vee}_{n-1})=(s_{z^{A_{n-1},(n-1)}}s_{z^{\tilde{D_n},(i)}_1}(\epsilon_{2}-\epsilon_{m+2}))(w^{\vee}_{n-1})\\[5pt]
    =&(s_{z^{A_{n-1},(n-1)}}(\epsilon_{2}+\epsilon_{n-m+1}))(w^{\vee}_{n-1})=(\epsilon_{1}+\epsilon_{n-m})(w^{\vee}_{n-1})=\frac{1}{2}+\frac{1}{2}=1
    \end{array}
$$
as $n-m<n$.\par
For $m=n-2$ we obtain:
$$
    \begin{array}{lll}
    &(s_{z^{A_{n-1},(n-1)}}s_{z^{\tilde{D_n},(n-1)}_1}\overrightarrow{s_{2},s_{n-2}}\alpha_{n-1/n})(w^{\vee}_{n-1})\\[5pt]
    =&(s_{z^{A_{n-1},(n-1)}}s_{z^{\tilde{D_n},(n-1)}_1}\overrightarrow{s_{2},s_{n-2}}(\epsilon_{n-1}\mp\epsilon_{n}))(w^{\vee}_{n-1})=(s_{z^{A_{n-1},(n-1)}}s_{z^{\tilde{D_n},(i)}_1}(\epsilon_{2}\mp\epsilon_{n}))(w^{\vee}_{n-1})\\[5pt]
    =&(s_{z^{A_{n-1},(n-1)}}(\epsilon_{2}+\epsilon_{3}))(w^{\vee}_{n-1}) =(\epsilon_{1}+\epsilon_{2})(w^{\vee}_{n-1})=\frac{1}{2}+\frac{1}{2}=1.
    \end{array}
    $$

For $m<k< n-2$ we get as coefficient for $t_{k,n-2}$:
$$
    \begin{array}{lll}
    &(s_{z^{A_{n-1},(n-1)}}s_{z^{\tilde{D_n},(n-1)}_1}\overrightarrow{s_{2},s_{m}}\alpha_{k+1})(w^{\vee}_{n-1})\\[5pt]
    =&(s_{z^{A_{n-1},(n-1)}}s_{z^{\tilde{D_n},(n-1)}_1}\overrightarrow{s_{2},s_{m}}(\epsilon_{k+1}-\epsilon_{k+2}))(w^{\vee}_{n-1})\\[5pt]
    =&(s_{z^{A_{n-1},(n-1)}}s_{z^{\tilde{D_n},(i)}_1}(\epsilon_{k+1}-\epsilon_{k+2}))(w^{\vee}_{n-1})\\[5pt]
    =&(s_{z^{A_{n-1},(n-1)}}(-\epsilon_{n-k+2}+\epsilon_{n-k+1}))(w^{\vee}_{n-1}) =(-\epsilon_{n-k+1}+\epsilon_{n-k})(w^{\vee}_{n-1})=-\frac{1}{2}+\frac{1}{2}=0
    \end{array}
$$
as $n-k+1<n-m+1\leq n$.\par
For $m<n-2$ the coefficient of $t_{n-2,n-2}$ is:
$$
    \begin{array}{lll}
    &(s_{z^{A_{n-1},(n-1)}}s_{z^{\tilde{D_n},(n-1)}_1}\overrightarrow{s_{2},s_{m}}\alpha_{n-1/n})(w^{\vee}_{n-1})\\[5pt]
    =&(s_{z^{A_{n-1},(n-1)}}s_{z^{\tilde{D_n},(n-1)}_1}\overrightarrow{s_{2},s_{m}}(\epsilon_{n-1}\mp\epsilon_{n}))(w^{\vee}_{n-1})=(s_{z^{A_{n-1},(n-1)}}s_{z^{\tilde{D_n},(i)}_1}(\epsilon_{n-1}\mp\epsilon_{n}))(w^{\vee}_{n-1})\\[5pt]
    =&(s_{z^{A_{n-1},(n-1)}}(-\epsilon_{4}+\epsilon_{3}))(w^{\vee}_{n-1}) =(-\epsilon_{3}+\epsilon_{2})(w^{\vee}_{n-1})=-\frac{1}{2}+\frac{1}{2}=0.
    \end{array}
$$
This holds true for $n>3$. For $n=3$ this case does not occur, as $1\leq m<n-2=1$ is not possible.\par
Now we compute the coefficients for the last block.\par
For $1\leq k<m$ the coefficient of $t^+_{k,n-1}$ is:
$$
    \begin{array}{lll}
    &(s_{z^{A_{n-1},(n-1)}}s_{z^{\tilde{D_n},(n-1)}_1}\overrightarrow{s_{2},s_{m}}\alpha_{k})(w^{\vee}_{n-1})=(s_{z^{A_{n-1},(n-1)}}s_{z^{\tilde{D_n},(n-1)}_1}\overrightarrow{s_{2},s_{m}}(\epsilon_{k}-\epsilon_{k+1}))(w^{\vee}_{n-1})\\[5pt]
    =&(s_{z^{A_{n-1},(n-1)}}s_{z^{\tilde{D_n},(i)}_1}(\epsilon_{k+1}-\epsilon_{k+2}))(w^{\vee}_{n-1})=(s_{z^{A_{n-1},(n-1)}}(-\epsilon_{n-k+2}+\epsilon_{n-k+1}))(w^{\vee}_{n-1})\\[5pt]
    =&(-\epsilon_{n-k+1}+\epsilon_{n-k})(w^{\vee}_{n-1})=-\frac{1}{2}+\frac{1}{2}=0
    \end{array}
$$
as $n-k+1<n$.\par
The coefficient of $t^+_{m,n-1}$ is:
$$
    \begin{array}{lll}
    &(s_{z^{A_{n-1},(n-1)}}s_{z^{\tilde{D_n},(n-1)}_1}\overrightarrow{s_{2},s_{m}}\alpha_{m})(w^{\vee}_{n-1})=(s_{z^{A_{n-1},(n-1)}}s_{z^{\tilde{D_n},(n-1)}_1}\overrightarrow{s_{2},s_{m}}(\epsilon_{m}-\epsilon_{m+1}))(w^{\vee}_{n-1})\\[5pt]
    =&(s_{z^{A_{n-1},(n-1)}}s_{z^{\tilde{D_n},(i)}_1}(\epsilon_{m+1}-\epsilon_{2}))(w^{\vee}_{n-1}) =(s_{z^{A_{n-1},(n-1)}}(-\epsilon_{n-m+2}-\epsilon_{2}))(w^{\vee}_{n-1})\\[5pt]
    =&(-\epsilon_{n-m+1}-\epsilon_{1})(w^{\vee}_{n-1})=-\frac{1}{2}-\frac{1}{2}=-1
    \end{array}
$$
as $n-m+1<n$.\par
The next entries are again in the subword, so it remains to compute the coefficient for $t^+_{n-1,n-1}$:
$$
    \begin{array}{lll}
    &(s_{z^{A_{n-1},(n-1)}}s_{z^{\tilde{D_n},(n-1)}_1}\overrightarrow{s_{2},s_{m}}\overrightarrow{s_{m+1},s_{n-2}}\alpha_{n/n-1})(w^{\vee}_{n-1})\\[5pt]
    =&(s_{z^{A_{n-1},(n-1)}}s_{z^{\tilde{D_n},(n-1)}_1}\overrightarrow{s_{2},s_{m}}\overrightarrow{s_{m+1},s_{n-2}}(\epsilon_{n-1}\pm\epsilon_n))(w^{\vee}_{n-1})\\[5pt]
    =&(s_{z^{A_{n-1},(n-1)}}s_{z^{\tilde{D_n},(n-1)}_1}(\epsilon_{2}\pm\epsilon_n))(w^{\vee}_{n-1})=(s_{z^{A_{n-1},(n-1)}}(\epsilon_{2}-\epsilon_3))(w^{\vee}_{n-1})\\[5pt]
    =&(\epsilon_{1}-\epsilon_2)(w^{\vee}_{n-1})=\frac{1}{2}-\frac{1}{2}=0.
    \end{array}
$$
So only two coefficients are not zero, which gives us the inequality $t^+_{m,n-2}\geq t^+_{m,n-1}$ for $1\leq m\leq n-2$.\par
\end{proof}

\begin{proof} of Theorem \ref{thmStringCone}:
We already know from Theorem \ref{thmBZfulfillsineq} that all the points in the string cone fulfill the inequalities (\ref{ineqCimin}). So it remains to show that each $t\in \R^{N}$ fulfilling the inequalities is in the string cone. As the string cone is rational (\cite{Li98}, Proposition 1.5), we might restrict to the case $t\in\Z^N_{\geq 0}$ and use Theorem \ref{thmrecl}:\par
Let $t\in\Z^N$ such that $t$ fulfills all the inequalities (\ref{ineqCimin}). 
We now claim the following: for $j=1,\dots,N$, $m^j$ also fulfills the inequalities (\ref{ineqCimin}). We use the convention $m^j_k=0$ for $k>j$ here.
We will prove this claim by induction on $j$, starting with $j=N$ and going down from $j$ to $j-1$.\par
As $m^N=t$, our induction hypothesis holds true for $j=N$. So we now can assume, that $m^j$ fulfills the inequalities and prove them for $m^{j-1}.$ We will use our double indication for this proof again, writing $k=(k_1,k_2)^-$ for $k\leq\frac{N}{2}$ and $k=(k_1,k_2)^+$ for $k>\frac{N}{2}$. \par
We always assume $k<j$ in the following and use the convention $m^j_{(k_1,k_2)^{\pm}}=0$ for $k_1>k_2$.\par
For $i_k\neq i_j$, which is equivalent to $\alpha_{i_k}\neq\alpha_{i_j}$, we have$\Delta^j(k)=m^j_k$ and so $$m_k^{j-1}=\min\{m_k^j,\Delta^j(k)\}=m_k^j.$$ 
Therefore, we get by our induction hypothesis:
$$
\begin{array}{lll}
m^{j-1}_{(k_1,k_2)^\pm}&=m^j_{(k_1,k_2)^\pm}\overset{I.H.}{\geq}m^j_{(k_1+1,k_2)^\pm}\\[5pt]
&\geq\min\{m^j_{(k_1+1,k_2)^\pm},\Delta^j((k_1+1,k_2)^\pm)\}=m^{j-1}_{(k_1+1,k_2)^\pm}.
\end{array}    
$$
For $k>\frac{N}{2}$ and $k_2<n-1$, we also get
$$
    \begin{array}{lll}
m^{j-1}_{(k_1,k_2)^+}&=m^j_{(k_1,k_2)^+}\overset{I. H.}{\geq}m^j_{(k_1,k_2+1)^+}\\[5pt]
&\geq\min\{m^j_{(k_1,k_2 +1)^+},\Delta^j((k_1,k_2+1)^+)\}=m^{j-1}_{(k_1,k_2+1)^+}.    
    \end{array}
$$

Now we consider the case $i_k=i_j$. \par
For $k_2<n-1$ and $k<\frac{N}{2}$ or $k_1<k_2$ we get
$$
    \begin{array}{lll}
\Delta^j((k_1,k_2)^\pm)&=\max\{\theta((k_1,k_2)^\pm,l,j)\mid k<l\leq j,\;\alpha_{i_l}=\alpha_{i_j}\}\\[5pt]
&\geq \theta((k_1,k_2)^\pm,(k_1+1,k_2+1)^\pm,j)\\[5pt]
&=m^j_{(k_1+1,k_2+1)^\pm}-\sum_{k<s\leq (k_1+1,k_2+1)^\pm}m^j_s\alpha_{i_s}(\alpha^{\vee}_{i_j})\\[5pt]
&=m^j_{(k_1+1,k_2+1)^\pm}+m^j_{(k_1+1,k_2)^\pm}+m^j_{(k_1,k_2+1)^\pm}-2m^j_{(k_1+1,k_2+1)^\pm}\\[5pt]
&=m^j_{(k_1+1,k_2)^\pm}+\underbrace{m^j_{(k_1,k_2+1)^\pm}-m^j_{(k_1+1,k_2+1)^\pm}}_{\geq 0 \;(I.H.)}\\[5pt]
&\geq m^j_{(k_1+1,k_2)^\pm}.
    \end{array}
$$
For $k>\frac{N}{2}$ we can rewrite our calculation from above to obtain 
$$
    \begin{array}{lll}
\Delta^j((k_1,k_2)^+)&=m^j_{(k_1+1,k_2)^+}+m^j_{(k_1,k_2+1)^+}-m^j_{(k_1+1,k_2+1)^+}\\[5pt]
&=m^j_{(k_1,k_2+1)^+}+\underbrace{m^j_{(k_1+1,k_2)^+}-m^j_{(k_1+1,k_2+1)^+}}_{\geq 0 \;(I.H.)}\geq m^j_{(k_1,k_2+1)^+}.
    \end{array}
$$
For $k_1=k_2<n-1$ and $k>\frac{N}{2}$ we get
$$
    \begin{array}{lll}
\Delta^j((k_2,k_2)^+)&=\max\{\theta((k_2,k_2)^+,l,j)\mid k<l\leq j,\;\alpha_{i_l}=\alpha_{i_j}\}\\[5pt]
&\geq \theta((k_2,k_2)^+,(k_2+2,k_2+2)^+,j)\\[5pt]
&=m^j_{(k_2+2,k_2+2)^+}-\sum_{k<s\leq (k_2+2,k_2+2)^+}m^j_s\alpha_{i_s}(\alpha^{\vee}_{i_j})\\[5pt]
&=m^j_{(k_2+2,k_2+2)^+}+m^j_{(k_2,k_2+1)^+}+m^j_{(k_2+1,k_2+2)^+}-2m^j_{(k_2+2,k_2+2)^+}\\[5pt]
&=m^j_{(k_2,k_2+1)^+}+\underbrace{m^j_{(k_2+1,k_2+2)^+}-m^j_{(k_2+2,k_2+2)^+}}_{\geq 0 \;(I.H.)}\\[5pt]
&\geq m^j_{(k_2,k_2+1)^+}\geq 0 (=m^j_{(k_2+1,k_2)^+}).
    \end{array}
$$

From these calculations we get
$$m^{j-1}_{(k_1,k_2)^\pm}=\min\{m^j_{(k_1,k_2)^\pm},\Delta^j((k_1,k_2)^\pm)\}\overset{I.H.}{\geq} m^j_{(k_1+1,k_2)^\pm}=m^{j-1}_{(k_1+1,k_2)^\pm}$$
as $i_{(k_1+1,k_2)^\pm}\neq i_{(k_1,k_2)^\pm}=i_j$\\\\
and
$$m^{j-1}_{(k_1,k_2)^+}=\min\{m^j_{(k_1,k_2)^+},\Delta^j((k_1,k_2)^+)\}\overset{I.H.}{\geq} m^j_{(k_1,k_2+1)^+}=m^{j-1}_{(k_1,k_2+1)^+}$$
as $i_{(k_1,k_2+1)^+}\neq i_{(k_1,k_2)^+}=i_j$.\\\par

For $k\leq \frac{N}{2}$ and $k_1>1,k_2=n-1$ we get
$$
    \begin{array}{lll}
\Delta^j((k_1,n-1)^-)&=\max\{\theta((k_1,n-1)^-,l,j)\mid k<l\leq j,\;\alpha_{i_l}=\alpha_{i_j}\}\\[5pt]
&\geq \theta((k_1,n-1)^-,(1,k_1)^+,j)=m^j_{(1,k_1)^+}-\sum_{k<s\leq (1,k_1)^+}m^j_s\alpha_{i_s}(\alpha^{\vee}_{i_j})\\[5pt]
&=m^j_{(1,k_1)^+}+m^j_{(k_1+1,n-1)^-}+m^j_{(1,k_1-1)^+}-2m^j_{(1,k_1)^+}\\[5pt]
&=m^j_{(k_1+1,n-1)^-}+\underbrace{m^j_{(1,k_1-1)^+}-m^j_{(1,k_1)^+}}_{\geq 0 \;(I.H.)}\geq m^j_{(k_1+1,n-1)^-}.
    \end{array}
$$
For $k\leq \frac{N}{2}$ and $k_1=1,k_2=n-1$ we get
$$
    \begin{array}{lll}
\Delta^j((1,n-1)^-)&=\max\{\theta((1,n-1)^-,l,j)\mid k<l\leq j,\;\alpha_{i_l}=\alpha_{i_j}\}\\[5pt]
&\geq \theta((1,n-1)^-,(2,2)^+,j)=m^j_{(2,2)^+}-\sum_{k<s\leq (2,2)^+}m^j_s\alpha_{i_s}(\alpha^{\vee}_{i_j})\\[5pt]
&=m^j_{(2,2)^+}+m^j_{(2,n-1)^-}+m^j_{(1,2)^+}-2m^j_{(2,2)^+}\\[5pt]
&=m^j_{(2,n-1)^-}+\underbrace{m^j_{(1,2)^+}-m^j_{(2,2)^+}}_{\geq 0 \;(I.H.)}\geq m^j_{(2,n-1)^-}.
    \end{array}
$$

Both cases together yield
$$m^{j-1}_{(k_1,n-1)^-}=\min\{m^j_{(k_1,n-1)^-},\Delta^j((k_1,n-1)^-)\}\overset{I.H.}{\geq} m^j_{(k_1+1,n-1)^-}=m^{j-1}_{(k_1+1,n-1)^-}$$
as $i_{(k_1+1,n-1)^-}\neq i_{(k_1,n-1)^-}=i_j$.\par

For $k>\frac{N}{2}$ and $k_2=n-1$ the case $i_k=i_j$ is not possible.\par
This finishes our claim.\par
Now, we show $$\Delta^j(k)\geq 0\;\forall 2\leq j\leq N,\;1\leq k\leq j-1.$$ We essentially already proved that in the proof of our claim above and have just to recollect the important statements here.\par
For $2\leq j\leq N$ and $1\leq k\leq j-1$ we find:\par
If $i_k\neq i_j$ 
$$\Delta^j(k)=m^j_k\geq 0,$$
as $m^j$ fulfills (\ref{ineqCimin}).\par
If $i_k=i_j$
$$\Delta^j(k)\geq m^j_{(k_1+1,k_2)^\pm}\geq 0.$$
This finishes the proof of the theorem.

\end{proof}

\subsection{\texorpdfstring{$B_n$}{Bn}}
\begin{proof} of Lemma \ref{LemineqB}:
We use Theorem \ref{BZcone}:
We know that $$\omega_0=\begin{pmatrix}1&2&\dots&n-1&n\\
-1&-2&\dots&-(n-1)&- n
\end{pmatrix}.$$
So for $1\leq i\leq n-1$, we get $$s_i\omega_0=\begin{pmatrix}1&2&\dots&i-1&i&i+1&i+2&\dots&n-1&n\\
-1&-2&\dots&-(i-1)&-(i+1)&-i&-(i+2)&\dots&-(n-1)&- n
\end{pmatrix}.$$
Now acting by the parabolic subgroup $W_{\hat{i}}$ gives us the minimal representative of $W_{\hat{i}}s_i\omega_0$ :$$z^{(i)}=\begin{pmatrix}1&2&\dots&i-1&i&i+1&i+2&\dots&n-1&n\\
-i&-(i-1)&\dots&-2&i+1&-1&i+2&\dots&n-1& n
\end{pmatrix}.$$
A word for $z^{(i)}$ is given by $$\mathbf{i}^{(i)}=\mathbf{i}^{A_{n-1},(i)}\mathbf{i}_{1}^{\tilde{B_n},(i)}\mathbf{i}_{2}^{\tilde{B_n},(i)}$$ where
$$
\begin{array}{lll}
\mathbf{i}^{A_{n-1},(i)}&=(\overrightarrow{i,n-1},\overrightarrow{i-1,n-2},\dots,\overrightarrow{1,n-i}),\\[5pt]
\mathbf{i}_{1}^{\tilde{B_n},(i)}&=(n,n-1,n,\overrightarrow{n-2,n},\dots \overrightarrow{n-i+1,n}),\\[5pt]
\mathbf{i}_{2}^{\tilde{B_n},(i)}&=(\overrightarrow{n-i,n-1},\dots,\overrightarrow{2,i+1},\overrightarrow{1,i-1}).    
\end{array}
$$
The corresponding permutations are
$$
\begin{array}{lll}
z^{A_{n-1},(i
)}&=\left(\begin{array}{lllllllll}
1&2&\dots&n-i&n-i+1&n-i+2&\dots&n-1&n\\
i+1&i+2&\dots&n&1&2&\dots&i-1&i
\end{array}\right),\\\\
z^{\tilde{B_n},(i
)}_1&=\left(\begin{array}{lllllllll}
1&2&\dots&n-i&n-i+1&n-i+2&\dots&n-1&n\\
1&2&\dots&n-i&-n&-(n-1)&\dots&-(n-i)&-(n-i+1)
\end{array}\right),\\\\
z^{\tilde{B_n},(i
)}_2&=\left(\begin{array}{llllllllll}
1&2&\dots&i-1&i&i+1&i+2&\dots&n-1&n\\
n-i+1&n-i+2&\dots&n-1&1&n&2&\dots&n-i-1&n-i
\end{array}\right),\\\\
z^{\tilde{D_n},(i
)}&=z^{\tilde{D_n},(i
)}_1z^{\tilde{D_n},(i
)}_2\\&=\left(\begin{array}{llllllllll}
1&2&\dots&i-1&i&i+1&i+2&\dots&n-1&n\\
-n&-(n-1)&\dots&-(n-i+2)&1&-(n-i+1)&2&\dots&n-i-1&n-i
\end{array}\right).
\end{array}
$$

Again, we can write $\mathbf{i}^{A_{n-1},(i)}$ as a subword of $\mathbf{i}^{A_{n-1}}$ in the following way:
$$(n-1,n-2,n-1,\overrightarrow{n-3,n-1},\dots,\overrightarrow{\underline{i,n-1}},\overrightarrow{\underline{i-1,n-2}},n-1,\dots,\overrightarrow{\underline{1,n-i}},\overrightarrow{n-i+1,n-1}).$$
We make the observation, that due to commutation relations for $i< m\leq n$ also the following words are words for $z^{\tilde{B_n},(i)}$:

$$
\begin{array}{lll}
\mathbf{i}_{2}^{\tilde{B_n},(i),m}=&(\overrightarrow{n-i,n-1},\overrightarrow{n-i-1,n-2},\dots,\overrightarrow{n-m+2,n-m+i+1},\overrightarrow{n-m+1,n-m+i-1},\\[5pt]
&\overrightarrow{n-m,n-m+i-2},n-m+i,\overrightarrow{n-m-1,n-m+i-3},n-m+i-1,\\[5pt]
&\dots,\overrightarrow{2,i},i+2,\overrightarrow{1,i-1},i+1).    
\end{array}
$$
We can write $\mathbf{i}^{\tilde{B_n},(i),m}$ as a subword of $\mathbf{i}^{\tilde{B_n}}$ in the following way:
$$
\begin{array}{lll}
&(\underline{n},\underline{n-1,n},\underline{n-2,n-1,n},\dots \underline{\overrightarrow{n-i+1,n}},\underline{\overrightarrow{n-i,n-1}},n,\\[5pt]
&\underline{\overrightarrow{n-i-1,n-2}},n-1,n,\dots,\underline{\overrightarrow{n-m+2,n-m+i+1}},\overrightarrow{n-m+i+2,n},\\[5pt]
&\underline{\overrightarrow{n-m+1,n-m+i-1}},\overrightarrow{n-m+i,n},\underline{\overrightarrow{n-m,n-m+i-2}},\\[5pt]
&n-m+i-1,\underline{n-m+i},\overrightarrow{n-m+i+1,n},\underline{\overrightarrow{n-m-1,n-m+i-3}},n-m+i-2,\\[5pt]
&\underline{n-m+i-1},\overrightarrow{n-m+i,n},\dots,\underline{\overrightarrow{2,i}},i+1,\underline{i+2},,\overrightarrow{i+3,n},\underline{\overrightarrow{1,i-1}},i,\underline{i+1},\overrightarrow{i+2,n}).
\end{array}
$$
This means, we take the first $i$ elements of the first $m-1$ blocks (or the whole block, if its length is less than $i$), the first $i-1$ elements from the remaining blocks and also the $(i+1)$-th element of all blocks right from the $m$-th block.\par
We now start calculating our inequalities. We just need to consider variables corresponding to entries of $\mathbf{i}$, which are not in the subword.\par
\comment{We use our double indication now. In this notation we have $(\mathbf{i}^{A_{n-1}})_{k,l}=n-l-1+k$
For the variables until the first entry of the subword, we get as coefficient
$$\alpha_{i_k}w_i^{\vee}=(\epsilon_{i_k}-\epsilon_{i_{k}+1})w_i^{\vee}=1-1=0,$$
as $i_k<i$.\par
For $i<k\leq l<n$ we get
$$
    \begin{array}{lll}
&(\overleftarrow{s_i,s_1}\dots\overleftarrow{s_l,s_{l-i+1}}\alpha_{l+1-k})(w_i^{\vee})\\[5pt]
=&(\overleftarrow{s_i,s_1}\dots\overleftarrow{s_l,s_{l-i+1}}(\epsilon_{l+1-k}-\epsilon_{l+2-k}))(w_i^{\vee})\\[5pt]
=&(\epsilon_{l+1+i-k}-\epsilon_{l+2+i-k})(w_i^{\vee})=0-0=0,
    \end{array}
$$
as $l+1+i-k\geq i+1$.\par}
All coefficients for variables corresponding to $\mathbf{i}^{A_{n-1}}$ are zero as in the $D_n$ case.
We now consider the variables corresponding to $\mathbf{i}^{\tilde{B_n}}$. Here we have $(\mathbf{i}^{\tilde{B_n}})_{k,l}=n-l+k$.
The entries of the first $i$ blocks are all in the subword, so we just need to compute the coefficients for $i<l$. For $i<k\leq l<m$ we get:
$$
    \begin{array}{lll}
    &(s_{z^{A_{n-1},(i)}}s_{z^{\tilde{B_n},(i)}_1}\overrightarrow{s_{n-i},s_{n-1}}\dots\overrightarrow{s_{n-l+1},s_{n-l+i}}\alpha_{n-l+k})(w_i^{\vee})\\[5pt]
    =&(s_{z^{A_{n-1},(i)}}s_{z^{\tilde{B_n},(i)}_1}\overrightarrow{s_{n-i},s_{n-1}}\dots\overrightarrow{s_{n-l+1},s_{n-l+i}}(\epsilon_{n-l+k}-\epsilon_{n-l+k+1}))(w_i^{\vee})\\[5pt]
    =&(s_{z^{A_{n-1},(i)}}s_{z^{\tilde{B_n},(i)}_1}(\epsilon_{n-l+k-i}-\epsilon_{n-l+k-i+1}))(w_i^{\vee})\\[5pt]
    =&(s_{z^{A_{n-1},(i)}}(\epsilon_{n-l+k-i}-\epsilon_{n-l+k-i+1}))(w_i^{\vee})\\[5pt]
    =&(\epsilon_{n-l+k}-\epsilon_{n-l+k+1})(w_i^{\vee})=0-0=0
    \end{array}
$$
as $n-l+k>i$. \par
For $k=l=$ if $(\mathbf{i}^{\tilde{B_n}})_{k,l}=n$ we need to delete the second $\epsilon$ but as it produces a $0$ this does not change the result.\par
For $l=m$ and $k=i$ we get 
$$
    \begin{array}{lll}
    &(s_{z^{A_{n-1},(i)}}s_{z^{\tilde{B_n},(i)}_1}\overrightarrow{s_{n-i},s_{n-1}}\dots\overrightarrow{s_{n-m+2},s_{n-m+i+1}}\overrightarrow{s_{n-m+1},s_{n-m+i-1}}\alpha_{n-m+i})(w_i^{\vee})\\[5pt]
    =&(s_{z^{A_{n-1},(i)}}s_{z^{\tilde{B_n},(i)}_1}\overrightarrow{s_{n-i},s_{n-1}}\dots\overrightarrow{s_{n-m+2},s_{n-m+i+1}}\overrightarrow{s_{n-m+1},s_{n-m+i-1}}(\epsilon_{n-m+i}-\epsilon_{n-m+i+1}))(w_i^{\vee})\\[5pt]
    =&(s_{z^{A_{n-1},(i)}}s_{z^{\tilde{B_n},(i)}_1}(\epsilon_{n-m+1}-\epsilon_{n}))(w_i^{\vee})=(s_{z^{A_{n-1},(i)}}(\epsilon_{n-m+1}+\epsilon_{n-i+1}))(w_i^{\vee})\\[5pt]
    =&(\epsilon_{n-m+1+i}+\epsilon_{1})(w_i^{\vee})=0+1=1,
    \end{array}
$$
as $n-m+1+i>i$.\par
For $l=m>i+1$ and $k=i+1$ we get 
$$
  \begin{array}{lll}
    &(s_{z^{A_{n-1},(i)}}s_{z^{\tilde{B_n},(i)}_1}\overrightarrow{s_{n-i},s_{n-1}}\dots\overrightarrow{s_{n-m+2},s_{n-m+i+1}}\overrightarrow{s_{n-m+1},s_{n-m+i-1}}\alpha_{n-m+i+1})(w_i^{\vee})\\[5pt]
    =&(s_{z^{A_{n-1},(i)}}s_{z^{\tilde{B_n},(i)}_1}\overrightarrow{s_{n-i},s_{n-1}}\dots\overrightarrow{s_{n-m+2},s_{n-m+i+1}}\overrightarrow{s_{n-m+1},s_{n-m+i-1}}(\epsilon_{n-m+i+1}-\epsilon_{n-m+i+2}))(w_i^{\vee})\\[5pt]
    =&(s_{z^{A_{n-1},(i)}}s_{z^{\tilde{B_n},(i)}_1}(\epsilon_{n}-\epsilon_{n-m+2}))(w_i^{\vee})=(s_{z^{A_{n-1},(i)}}(-\epsilon_{n-i+1}-\epsilon_{n-m+2}))(w_i^{\vee})\\[5pt]
    =&(-\epsilon_{1}-\epsilon_{n-m+2+i})(w_i^{\vee})=-1+0=-1,
    \end{array}
$$
as $n-m+2+i>i$.\par
For $l=m=i+1=k$ we get 
$$
    \begin{array}{lll}
    &(s_{z^{A_{n-1},(i)}}s_{z^{\tilde{B_n},(i)}_1}\overrightarrow{s_{n-i},s_{n-2}}\alpha_{n})(w_i^{\vee})=(s_{z^{A_{n-1},(i)}}s_{z^{\tilde{B_n},(i)}_1}\overrightarrow{s_{n-i},s_{n-2}}(\epsilon_{n}))(w_i^{\vee})\\[5pt]
    =&(s_{z^{A_{n-1},(i)}}(-\epsilon_{n-i+1}))(w_i^{\vee})=(-\epsilon_{1})(w_i^{\vee})=-1.
    \end{array}
$$

For $l=m$ and $i+1<k< m$ we get 
$$
    \begin{array}{lll}
    &(s_{z^{A_{n-1},(i)}}s_{z^{\tilde{B_n},(i)}_1}\overrightarrow{s_{n-i},s_{n-1}}\dots\overrightarrow{s_{n-m+2},s_{n-m+i+1}}\overrightarrow{s_{n-m+1},s_{n-m+i-1}}\alpha_{n-m+k})(w_i^{\vee})\\[5pt]
    =&(s_{z^{A_{n-1},(i)}}s_{z^{\tilde{B_n},(i)}_1}\overrightarrow{s_{n-i},s_{n-1}}\dots\overrightarrow{s_{n-m+2},s_{n-m+i+1}}\overrightarrow{s_{n-m+1},s_{n-m+i-1}}(\epsilon_{n-m+k}-\\[5pt]
    
    &\epsilon_{n-m+k+1}))(w_i^{\vee})=(s_{z^{A_{n-1},(i)}}s_{z^{\tilde{B_n},(i)}_1}(\epsilon_{n-m+k-i}-\epsilon_{n-m+k-i+1}))(w_i^{\vee})\\[5pt]\\
    =&(s_{z^{A_{n-1},(i)}}(\epsilon_{n-m+k-i}-\epsilon_{n-m+k-i+1}))(w_i^{\vee})=(\epsilon_{n-m+k}-\epsilon_{n-m+k+1})(w_i^{\vee})=0+0=0,
    \end{array}
$$
as $n-m+k+i>i$. \par
For $l=m$ and $i+1<k= m$ we get 
$$
    \begin{array}{lll}
    &(s_{z^{A_{n-1},(i)}}s_{z^{\tilde{B_n},(i)}_1}\overrightarrow{s_{n-i},s_{n-1}}\dots\overrightarrow{s_{n-m+2},s_{n-m+i+1}}\overrightarrow{s_{n-m+1},s_{n-m+i-1}}\alpha_{n})(w_i^{\vee})\\[5pt]
    =&(s_{z^{A_{n-1},(i)}}s_{z^{\tilde{B_n},(i)}_1}\overrightarrow{s_{n-i},s_{n-1}}\dots\overrightarrow{s_{n-m+2},s_{n-m+i+1}}\overrightarrow{s_{n-m+1},s_{n-m+i-1}}(\epsilon_{n}))(w_i^{\vee})\\[5pt]
    =&(s_{z^{A_{n-1},(i)}}s_{z^{\tilde{B_n},(i)}_1}(\epsilon_{n-i}))(w_i^{\vee})=(s_{z^{A_{n-1},(i)}}(\epsilon_{n-m+k-i}))(w_i^{\vee})=\epsilon_{n-m+k}(w_i^{\vee})=0
    \end{array}
$$
as $n-m+k>i$.

For $k=i$ and $m<l\leq n$ we get 
$$
    \begin{array}{lll}
    &(s_{z^{A_{n-1},(i)}}s_{z^{\tilde{B_n},(i)}_1}\overrightarrow{s_{n-i},s_{n-1}}\dots\overrightarrow{s_{n-m+2},s_{n-m+i+1}}\overrightarrow{s_{n-m+1},s_{n-m+i-1}}\overrightarrow{s_{n-m},s_{n-m+i-2}},s_{n-m+i}\\[5pt]
    &\dots\overrightarrow{s_{n-l+1},s_{n-l+i-1}}\alpha_{n-l+i})(w_i^{\vee})\\[5pt]
    =&(s_{z^{A_{n-1},(i)}}s_{z^{\tilde{B_n},(i)}_1}\overrightarrow{s_{n-i},s_{n-1}}\dots\overrightarrow{s_{n-m+2},s_{n-m+i+1}}\overrightarrow{s_{n-m+1},s_{n-m+i-1}}\overrightarrow{s_{n-m},s_{n-m+i-2}},s_{n-m+i}\\[5pt]
    &\dots\overrightarrow{s_{n-l+1},s_{n-l+i-1}}(\epsilon_{n-l+i}-\epsilon_{n-l+i+1}))(w_i^{\vee})=(s_{z^{A_{n-1},(i)}}s_{z^{\tilde{B_n},(i)}_1}(\epsilon_{n-l+1}-\epsilon_{n-l+2}))(w_i^{\vee})\\[5pt]
    =&(s_{z^{A_{n-1},(i)}}(\epsilon_{n-l+1}-\epsilon_{n-l+2}))(w_i^{\vee})=(\epsilon_{n-l+1+i}-\epsilon_{n-l+2+i})(w_i^{\vee})=0-0=0,
    \end{array}
$$
as $n-l+1+i>i$.\par

For  $m< l\leq n$ and $i+2\leq k< l$ we get 
$$
    \begin{array}{lll}
    &(s_{z^{A_{n-1},(i)}}s_{z^{\tilde{B_n},(i)}_1}\overrightarrow{s_{n-i},s_{n-1}}\dots\overrightarrow{s_{n-m+2},s_{n-m+i+1}}\overrightarrow{s_{n-m+1},s_{n-m+i-1}}\overrightarrow{s_{n-m},s_{n-m+i-2}},s_{n-m+i}\\[5pt]
    &\dots\overrightarrow{s_{n-l+1},s_{n-l+i-1}},s_{n-l+i+1}\alpha_{n-l+k})(w_i^{\vee})\\[5pt]
    =&(s_{z^{A_{n-1},(i)}}s_{z^{\tilde{B_n},(i)}_1}\overrightarrow{s_{n-i},s_{n-1}}\dots\overrightarrow{s_{n-m+2},s_{n-m+i+1}}\overrightarrow{s_{n-m+1},s_{n-m+i-1}}\overrightarrow{s_{n-m},s_{n-m+i-2}},s_{n-m+i}\\[5pt]
    &\dots\overrightarrow{s_{n-l+1},s_{n-l+i-1}},s_{n-l+i+1}(\epsilon_{n-l+k}-\epsilon_{n-l+k+1}))(w_i^{\vee})\\[5pt]
    =&(s_{z^{A_{n-1},(i)}}s_{z^{\tilde{B_n},(i)}_1}(\epsilon_{n-l+k-i}-\epsilon_{n-l+k-i+1}))(w_i^{\vee})\\[5pt]
    =&(s_{z^{A_{n-1},(i)}}(\epsilon_{n-l+k-i}-\epsilon_{n-l+k-i+1}))(w_i^{\vee})=(\epsilon_{n-l+k}-\epsilon_{n-l+k+1})(w_i^{\vee})=0-0=0,
    \end{array}
$$
as $n-l+k>i$. \\\par
For  $m< l\leq n$ and $i+2\leq k= l$ we get 
$$
    \begin{array}{lll}
    &(s_{z^{A_{n-1},(i)}}s_{z^{\tilde{B_n},(i)}_1}\overrightarrow{s_{n-i},s_{n-1}}\dots\overrightarrow{s_{n-m+2},s_{n-m+i+1}}\overrightarrow{s_{n-m+1},s_{n-m+i-1}}\overrightarrow{s_{n-m},s_{n-m+i-2}},s_{n-m+i}\\[5pt]
    &\dots\overrightarrow{s_{n-l+1},s_{n-l+i-1}},s_{n-l+i+1}\alpha_{n})(w_i^{\vee})\\[5pt]
    =&(s_{z^{A_{n-1},(i)}}s_{z^{\tilde{B_n},(i)}_1}\overrightarrow{s_{n-i},s_{n-1}}\dots\overrightarrow{s_{n-m+2},s_{n-m+i+1}}\overrightarrow{s_{n-m+1},s_{n-m+i-1}}\overrightarrow{s_{n-m},s_{n-m+i-2}},s_{n-m+i}\\[5pt]
    &\dots\overrightarrow{s_{n-l+1},s_{n-l+i-1}},s_{n-l+i+1}(\epsilon_{n}))(w_i^{\vee}) =(s_{z^{A_{n-1},(i)}}s_{z^{\tilde{B_n},(i)}_1}(\epsilon_{n-i}))(w_i^{\vee})\\[5pt]
    =&(s_{z^{A_{n-1},(i)}}(\epsilon_{n-i}))(w_i^{\vee})=(\epsilon_{n})(w_i^{\vee}) =0.
    \end{array}
$$

So only two coefficients are not zero, which gives us the inequality $t^+_{i,m}\geq t^+_{i+1,m}$ for $1\leq i< m\leq n$.\\\par
Now, we encounter a difference to the $D_n$ proof. Taking a look at 
$$
    \begin{array}{lll}
    \mathbf{i}_{1}^{\tilde{B_n},(i)}\mathbf{i}_{2}^{\tilde{B_n},(i),i+1}=&(n,n-1,n,\overrightarrow{n-2,n},\dots \overrightarrow{n-i+1,n})\\[5pt]
    &(\overrightarrow{n-i,n-2},\overrightarrow{n-i-1,n-3},n-1,\overrightarrow{n-i-2,n-4},n-2,\\[5pt]
&\dots,\overrightarrow{2,i},i+2,\overrightarrow{1,i-1},i+1)
    \end{array}
$$
we see that, due to the fact that in $B_n$  $s_n$ and $s_{n-2}$ do commute, we can move the $n$ from the $i$-th block to the $(i+1)$-th block and rewrite the above expression as $\mathbf{i}_{1'}^{\tilde{B_n},(i)}\mathbf{i}_{2'}^{\tilde{B_n},(i),i+1}$, where
$$
    \begin{array}{lll}
    \mathbf{i}_{1'}^{\tilde{B_n},(i)}&=(n,n-1,n,\overrightarrow{n-2,n},\dots \overrightarrow{n-i+1,n-1}),\\[5pt]
\mathbf{i}_{2'}^{\tilde{B_n},(i),i+1}=&(\overrightarrow{n-i,n-2},n,\overrightarrow{n-i-1,n-3},n-1,\overrightarrow{n-i-2,n-4},n-2,\\[5pt]
&\dots,\overrightarrow{2,i},i+2,\overrightarrow{1,i-1},i+1).
    \end{array}
$$
This does not change any coefficient of variables corresponding to blocks greater than $i+1$. In each of the blocks $i$ and $i+1$ there is only one entry not in the subword, so we only need to compute two coefficients:

For $k=l=i$ and we obtain:

$$
    \begin{array}{lll}
    &(s_{z^{A_{n-1},(i)}}s_{z^{\tilde{B_n},(i)}_{1'}}\alpha_{n})(w_i^{\vee})=(s_{z^{A_{n-1},(i)}}s_{z^{\tilde{B_n},(i)}_{1'}}(\epsilon_{n}))(w_i^{\vee})\\[5pt]
    =&(s_{z^{A_{n-1},(i)}}(\epsilon_{n-i+1}))(w_i^{\vee})=(\epsilon_{1})(w_i^{\vee})=1.
    \end{array}
$$

For $k=i$ and $l=i+1$ we obtain 

$$
    \begin{array}{lll}
    &(s_{z^{A_{n-1},(i)}}s_{z^{\tilde{B_n},(i)}_{1'}}\overrightarrow{n-i,n-2}\alpha_{n-1})(w_i^{\vee})=(s_{z^{A_{n-1},(i)}}s_{z^{\tilde{B_n},(i)}_{1'}}\overrightarrow{n-i,n-2}(\epsilon_{n-1}-\epsilon_{n})(w_i^{\vee})\\[5pt]
    =&(s_{z^{A_{n-1},(i)}}s_{z^{\tilde{B_n},(i)}_{1'}}(\epsilon_{n-i}-\epsilon_{n})(w_i^{\vee})=(s_{z^{A_{n-1},(i)}}(\epsilon_{n-i}-\epsilon_{n-i+1}))(w_i^{\vee})\\[5pt]
    =&(\epsilon_{n}-\epsilon_{1})(w_i^{\vee})=0-1=-1
    \end{array}
$$
This gives us the inequality $t^+_{i,i}\geq t^+_{i,i+1}$ for $1\leq i<n$.
\end{proof}
\begin{proof} of Theorem \ref{StringConeB}
We already know from Theorem \ref{thmBZfulfillsineqB} that all the points in the string cone fulfill the inequalities (\ref{ineqCiminB}). So it remains to show that each $t\in \R^{N}$ fulfilling the inequalities is in the string cone. As the string cone is rational (\cite{Li98}, Proposition 1.5), we might restrict to the case $t\in\Z^N_{\geq 0}$ and use Theorem \ref{thmrecl}:\par
Let $t\in\Z^N$ such that $t$ fulfills all the inequalities (\ref{ineqCiminB}). 
We now claim the following: for $j=1,\dots,N$, $m^j$ also fulfills the inequalities (\ref{ineqCiminB}). We use the convention $m^j_k=0$ for $k>j$ here.
We will prove this claim by induction on $j$, starting with $j=N$ and going down from $j$ to $j-1$.\par
As $m^N=t$, our induction hypothesis holds true for $j=N$. So we now can assume, that $m^j$ fulfills the inequalities and prove them for $m^{j-1}.$ We will also use our double indication for this proof again, writing $k=(k_1,k_2)^-$ for $k\leq\frac{n(n-1)}{2}$ and $k=(k_1,k_2)^+$ for $k>\frac{n(n-1)}{2}$. \par
We always assume $k<j$ in the following and use the convention $m^j_{(k_1,k_2)^{\pm}}=0$ for $k_1>k_2$.\par
For $i_k\neq i_j$ - which is equivalent to $\alpha_{i_k}\neq\alpha_{i_j}$ - we have$\Delta^j(k)=m^j_k$ and so $$m_k^{j-1}=\min\{m_k^j,\Delta^j(k)\}=m_k^j.$$ 
Therefore, we get by our induction hypothesis:
$$
\begin{array}{lll}
m^{j-1}_{(k_1,k_2)^\pm}&=m^j_{(k_1,k_2)^\pm}\overset{I.H.}{\geq}m^j_{(k_1+1,k_2)^\pm}\\[5pt]
&\geq\min\{m^j_{(k_1+1,k_2)^\pm},\Delta^j((k_1+1,k_2)^\pm)\}=m^{j-1}_{(k_1+1,k_2)^\pm}.
\end{array}    
$$
For $k>\frac{n(n-1)}{2}$ and $k_2<n$, we also get
$$
    \begin{array}{lll}
m^{j-1}_{(k_1,k_2)^+}&=m^j_{(k_1,k_2)^+}\overset{I. H.}{\geq}m^j_{(k_1,k_2+1)^+}\\[5pt]
&\geq\min\{m^j_{(k_1,k_2 +1)^+},\Delta^j((k_1,k_2+1)^+)\}=m^{j-1}_{(k_1,k_2+1)^+}.    
    \end{array}
$$
\par

Now we consider the case $i_k=i_j$. \par
For $k_1\leq k_2<n-1$ and $k<\frac{n(n-1)}{2}$ or $k_1\leq k_2< n,\;k_1\neq k_2-1$ and $k>\frac{n(n-1)}{2}$ we get
$$
\begin{array}{rcl}
\Delta^j((k_1,k_2)^\pm)&=&\max\{\theta((k_1,k_2)^\pm,l,j)\mid k<l\leq j,\;\alpha_{i_l}=\alpha_{i_j}\}\\[5pt]
&\geq &\theta((k_1,k_2)^\pm,(k_1+1,k_2+1)^\pm,j)\\[5pt]
&=&m^j_{(k_1+1,k_2+1)^\pm}-\sum\limits_{k<s\leq (k_1+1,k_2+1)^\pm}m^j_s\alpha_{i_s}(\alpha^{\vee}_{i_j})\\[6pt]
&=&m^j_{(k_1+1,k_2+1)^\pm}+m^j_{(k_1+1,k_2)^\pm}+m^j_{(k_1,k_2+1)^\pm}-2m^j_{(k_1+1,k_2+1)^\pm}\\[5pt]
&=&m^j_{(k_1+1,k_2)^\pm}+\underbrace{m^j_{(k_1,k_2+1)^\pm}-m^j_{(k_1+1,k_2+1)^\pm}}_{\geq 0 \;(I.H.)}\\[-5pt]
&\geq& m^j_{(k_1+1,k_2)^\pm}.
\end{array}
$$
For $k>\frac{n(n-1)}{2}$ we can rewrite our calculation from above to obtain 
$$
\begin{array}{lll}
\Delta^j((k_1,k_2)^+)&=m^j_{(k_1+1,k_2)^+}+m^j_{(k_1,k_2+1)^+}-m^j_{(k_1+1,k_2+1)^+}\\[5pt]
&=m^j_{(k_1,k_2+1)^+}+\underbrace{m^j_{(k_1+1,k_2)^+}-m^j_{(k_1+1,k_2+1)^+}}_{\geq 0 \;(I.H.)}\\[5pt]
&\geq m^j_{(k_1,k_2+1)^+}.
\end{array}
$$
For $k_1=k_2-1<n$ and $k>\frac{n(n-1)}{2}$ we get
$$
    \begin{array}{lll}
\Delta^j((k_2-1,k_2)^+)&=\max\{\theta((k_2-1,k_2)^+,l,j)\mid k<l\leq j,\;\alpha_{i_l}=\alpha_{i_j}\}\\[5pt]
&\geq \theta((k_2-1,k_2)^+,(k_2,k_2+1)^+,j)\\[5pt]
&=m^j_{(k_2,k_2+1)^+}-\sum\limits_{k<s\leq (k_2,k_2+1)^+}m^j_s\alpha_{i_s}(\alpha^{\vee}_{i_j})\\[6pt]
&=m^j_{(k_2,k_2+1)^+}+2m^j_{(k_2,k_2)^+}+m^j_{(k_2-1,k_2+1)^+}-2m^j_{(k_2,k_2+1)^+}\\[5pt]
&=2m^j_{(k_2,k_2)^+}+\underbrace{m^j_{(k_2-1,k_2+1)^+}-m^j_{(k_2,k_2+1)^+}}_{\geq 0 \;(I.H.)}\\[5pt]
&\geq m^j_{(k_2,k_2)^+}.\\[5pt]
    \end{array}
$$
We can rewrite this calculation to obtain
$$
\begin{array}{lll}
\Delta^j((k_2-1,k_2)^+)&=m^j_{(k_2,k_2+1)^+}+2m^j_{(k_2,k_2)^+}+m^j_{(k_2-1,k_2+1)^+}-2m^j_{(k_2,k_2+1)^+}\\[5pt]
&=m^j_{(k_2-1,k_2+1)^+}+\underbrace{2m^j_{(k_2,k_2)^+}-m^j_{(k_2,k_2+1)^+}}_{\geq 0 \;(I.H.)}\\[5pt]
&\geq m^j_{(k_2-1,k_2+1)^+}.\\
\end{array}
$$

From these calculations we get
$$m^{j-1}_{(k_1,k_2)^\pm}=\min\{m^j_{(k_1,k_2)^\pm},\Delta^j((k_1,k_2)^\pm)\}\overset{I.H.}{\geq} m^j_{(k_1+1,k_2)^\pm}=m^{j-1}_{(k_1+1,k_2)^\pm}$$
as $i_{(k_1+1,k_2)^\pm}\neq i_{(k_1,k_2)^\pm}=i_j$\\\\
and
$$m^{j-1}_{(k_1,k_2)^+}=\min\{m^j_{(k_1,k_2)^+},\Delta^j((k_1,k_2)^+)\}\overset{I.H.}{\geq} m^j_{(k_1,k_2+1)^+}=m^{j-1}_{(k_1,k_2+1)^+}$$
as $i_{(k_1,k_2+1)^+}\neq i_{(k_1,k_2)^+}=i_j$.\\\par

For $k\leq \frac{n(n-1)}{2}$ and $k_1>1,k_2=n-1$ we get
$$
    \begin{array}{lll}
\Delta^j((k_1,n-1)^-)&=\max\{\theta((k_1,n-1)^-,l,j)\mid k<l\leq j,\;\alpha_{i_l}=\alpha_{i_j}\}\\[5pt]
&\geq \theta((k_1,n-1)^-,(1,k_1+1)^+,j)\\[5pt]
&=m^j_{(1,k_1+1)^+}-\sum\limits_{k<s\leq (1,k_1+1)^+}m^j_s\alpha_{i_s}(\alpha^{\vee}_{i_j})\\[6pt]
&=m^j_{(1,k_1+1)^+}+m^j_{(k_1+1,n-1)^-}+m^j_{(1,k_1)^+}-2m^j_{(1,k_1+1)^+}\\[5pt]
&=m^j_{(k_1+1,n-1)^-}+\underbrace{m^j_{(1,k_1)^+}-m^j_{(1,k_1+1)^+}}_{\geq 0 \;(I.H.)}\\[5pt]
&\geq m^j_{(k_1+1,n-1)^-}.
    
\end{array}
$$
For $k\leq \frac{n(n-1)}{2}$ and $k_1=1,k_2=n-1$ we get
$$
    \begin{array}{lll}
\Delta^j((1,n-1)^-)&=\max\{\theta((1,n-1)^-,l,j)\mid k<l\leq j,\;\alpha_{i_l}=\alpha_{i_j}\}\\[5pt]
&\geq \theta((1,n-1)^-,(1,2)^+,j)\\[5pt]
&=m^j_{(1,2)^+}-\sum\limits_{k<s\leq (1,2)^+}m^j_s\alpha_{i_s}(\alpha^{\vee}_{i_j})\\[6pt]
&=m^j_{(1,2)^+}+m^j_{(2,n-1)^-}+2m^j_{(1,1)^+}-2m^j_{(1,2)^+}\\[5pt]
&=m^j_{(2,n-1)^-}+\underbrace{2m^j_{(1,1)^+}-m^j_{(1,2)^+}}_{\geq 0 \;(I.H.)}\\[5pt]
&\geq m^j_{(2,n-1)^-}.
    \end{array}
$$

Both cases together yield
$$m^{j-1}_{(k_1,n-1)^-}=\min\{m^j_{(k_1,n-1)^-},\Delta^j((k_1,n-1)^-)\}\overset{I.H.}{\geq} m^j_{(k_1+1,n-1)^-}=m^{j-1}_{(k_1+1,n-1)^-}$$
as $i_{(k_1+1,n-1)^-}\neq i_{(k_1,n-1)^-}=i_j$.\par

For $k>\frac{n(n-1)}{2}$ and $k_2=n$ the case $i_k=i_j$ is not possible.\par
This finishes our claim.\par
Now, we show $$\Delta^j(k)\geq 0\;\forall 2\leq j\leq N,\;1\leq k\leq j-1.$$ We essentially already proved that in the proof of our claim above and have just to recollect the important statements here.\par
For $2\leq j\leq N$ and $1\leq k\leq j-1$ we find:\par
If $i_k\neq i_j$ 
$$\Delta^j(k)=m^j_k\geq 0,$$
as $m^j$ fulfills (\ref{ineqCiminB}).\par
If $i_k=i_j$
$$\Delta^j(k)\geq m^j_{(k_1+1,k_2)^\pm}\geq 0.$$
This finishes the proof of the theorem.
\end{proof}

\bibliographystyle{plain}
\bibliography{bibfile}

\end{document}